\newcommand{\Z}{\mathbb{Z}}
\newcommand{\mf}{\mathfrak}
\newcommand{\bb}[1]{\mathbb{#1}}								
\newcommand{\map}[1]{\mathop{\longrightarrow}\limits^{#1}}
\newcommand{\Q}{\mathbb{Q}}
\newcommand{\A}{\mathbb{A}}
\newcommand{\R}{\mathbb{R}}
\newcommand{\mcal}{\mathcal}
\newcommand{\mscr}{\mathscr}
\newcommand*{\sHom}{\mscr{H}\kern -.5pt om}
\newcommand*{\sExt}{\mscr{E}\kern -.5pt xt}
\newcommand{\ov}{\overline}
\renewcommand{\div}{\mathrm{div}}
\newcommand{\HK}{e_{\mathrm{HK}}}
\newcommand{\nvol}{\widehat{\mathrm{vol}}}
\renewcommand{\O}{\mcal{O}}
\DeclareMathOperator{\Spec}{Spec}
\DeclareMathOperator{\Frac}{Frac}
\DeclareMathOperator{\Hom}{Hom}
\DeclareMathOperator{\Supp}{Supp}
\DeclareMathOperator{\Ram}{Ram}
\DeclareMathOperator{\dv}{div}
\DeclareMathOperator{\Tr}{Tr}
\DeclareMathOperator{\Diff}{Diff}
\DeclareMathOperator{\FDiff}{F-Diff}
\newtheorem{theorem}{Theorem}[section]
\newtheorem{proposition}[theorem]{Proposition}
\newtheorem{corollary}[theorem]{Corollary}
\newtheorem{lemma}[theorem]{Lemma}
\newtheorem{question}[theorem]{Question}
\theoremstyle{definition}
\newtheorem{example}[theorem]{Example}
\newtheorem{definition}[theorem]{Definition}
\theoremstyle{remark}
\newtheorem{remark}[theorem]{Remark}
\title{Inversion of adjunction for $F$-signature}
\author{Gregory Taylor}
\date{}
\begin{document}

\maketitle

\begin{abstract}
Let $(R,\Delta+D)$ be a log $\mathbb{Q}$-Gorenstein pair where $R$ is a Noetherian, $F$-finite, normal, local domain of characteristic $p > 0$, $\Delta$ is an effective $\mathbb{Q}$-divisor and $D$ is an integral $\Q$-Cartier divisor. We show that the left derivative of the $F$-signature function $s(R,\Delta + tD)$ at $t = 1$ is equal to $-s(\O_D, \Diff_D(\Delta))$. This equality is interpreted as a quantitative form of inversion of adjunction for strong $F$-regularity. As an immediate corollary, we obtain the inequality $s(R,\Delta) \geq s(\O_D, \mathrm{Diff}_D(\Delta))$. We also discuss the implications of our result for the conjectured connection between the $F$-signature and the normalized volume.
\end{abstract}

\section{Introduction}\label{sec:Introduction}

A local property $\mcal{P}$ satisfies inversion of adjunction if the implication 
\[
	D \text{ has } \mcal{P} \text{ for some prime divisor } D \subseteq X ~~~ \Longrightarrow ~~~ X \text{ has } \mcal{P} \text{ near } D
\] 
holds. In the minimal model program (MMP), inversion of adjunction allows one to control singularities by induction on dimension \cite[Chapter 4]{Kol13}. Given the deep connections between singularities of the MMP and $F$-singularities, establishing inversion of adjunction for classes of $F$-singularities is an active program. For $F$-regularity and $F$-purity, various inversion of adjunction statements are known \cite{SchFad,TakWat,Inv,Das,DasSch}. Our main theorem is a quantitative inversion of adjunction statement for the $F$-signature, an important numerical invariant of singularities in positive characteristic which characterizes $F$-regularity.

Let $R$ be a ring of characteristic $p$ and $\Delta$ an effective $\Q$-divisoron $\Spec{R}$. The $F$-signature of $(R,\Delta)$, denoted $s(R,\Delta)$, incorporates subtle information about the singularities of $(R,\Delta)$ by measuring the asymptotic growth of the number of Frobenius splittings. By varying the coefficient a single component of $\Delta$, we consider the $F$-signature as a function $t \mapsto s(R, \Delta + tD)$. When $D$ is $\Q$-Cartier, the function $s(R,\Delta + tD)$ is continuous and convex \cite{BST2}. Our main theorem shows that the left derivative of $s(R,\Delta+tD)$ at $t=1$ (i.e. the rate at which the pair $(R,\Delta+tD)$ loses $F$-regularity) is determined by the $F$-signature of the pair $(\O_D,\Diff_D(\Delta))$. The $\Q$-divisor $\Diff_D(\Delta)$, called the \emph{different}, acts as a correction factor in adjunction statements (see Section \ref{subsec:Different}). For a function $f \colon \mathbb{R} \to \mathbb{R}$, we denote the left-derivative of $f$ at $t=c$ by $\frac{\partial_-}{\partial t}\big|_{t=c}f(t)$.

\begin{theorem}\label{thm:MainTheoremIntro}
  Let $(R,\Delta + D)$ be a log $\Q$-Gorenstein pair where $R$ is a Noetherian, $F$-finite, normal local ring of dimension $d$, $\Delta$ is an effective $\Q$-divisor, and $D$ is an irreducible, reduced $\Q$-Cartier divisor which is not a component of $\Delta$. Then
  \[
     \frac{\partial_-}{\partial t}\bigg|_{t=1} s(R,\Delta +tD) = -s(\O_D,\Diff_D(\Delta))
  \]
  where $\Diff_D(\Delta)$ is the different of $\Delta$ on $D$.
\end{theorem}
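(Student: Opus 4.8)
The plan is to compute the $F$-signature through its realization as a normalized limit of \emph{splitting numbers}. For each $e$, let $a_e(R,\Gamma)$ denote the free rank of $F^e_*R$ relative to an effective $\Q$-divisor $\Gamma$, i.e.\ the maximal number of copies of $R$ splitting off $F^e_*R(\lceil(p^e-1)\Gamma\rceil)$, so that $s(R,\Gamma) = \lim_e a_e(R,\Gamma)/p^{ed}$. Since $t \mapsto s(R,\Delta+tD)$ is convex by \cite{BST2}, its left derivative at $t=1$ exists and equals $\lim_{t\to 1^-}\frac{s(R,\Delta+tD) - s(R,\Delta+D)}{t-1}$. First I would fix this viewpoint and reduce the theorem to a precise asymptotic estimate for the difference $a_e(R,\Delta+tD) - a_e(R,\Delta+D)$, which counts the Frobenius splittings compatible with $\Delta+tD$ that fail to remain compatible once the coefficient of $D$ is raised to $1$.

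Second, I would analyze these marginal splittings along $D$. Because $D$ is $\Q$-Cartier, choose $m$ with $mD$ Cartier, locally $mD = \dv(f)$; then raising the coefficient of $D$ twists the relevant reflexive module by powers of $f$, and the quotient $R(\lceil(p^e-1)(\Delta+D)\rceil)/R(\lceil(p^e-1)(\Delta+tD)\rceil)$ carries a finite filtration whose graded pieces are supported on $D$, hence are $\O_D$-modules. The central step is to identify the splitting contribution of each graded piece with a splitting number of $(\O_D,\Diff_D(\Delta))$. Here the different is forced upon us by $F$-adjunction: restricting a map $F^e_*R(\lceil(p^e-1)(\Delta+D)\rceil)\to R$ to the subscheme $D$ produces a map $F^e_*\O_D(\lceil(p^e-1)\Diff_D(\Delta)\rceil)\to\O_D$, and the correction term is exactly the adjunction relation that carries $K_R+\Delta+D$ to $K_D+\Diff_D(\Delta)$. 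I would make this into a clean bijection or length comparison between the $R$-splitting data lost across one unit step in the coefficient of $D$ and the $\O_D$-splitting data counted by $a_e(\O_D,\Diff_D(\Delta))$.

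Finally, I would assemble the estimate. The number of unit increments of the $D$-coefficient between $n_e(t) := \lceil(p^e-1)t\rceil$ and $p^e-1$ is $(1-t)(p^e-1) + O(1)$, and each contributes, to leading order, $a_e(\O_D,\Diff_D(\Delta))$ splittings; since $\dim\O_D = d-1$, dividing by $p^{ed}$ and letting $e\to\infty$ should give $s(R,\Delta+tD) - s(R,\Delta+D) = (1-t)\,s(\O_D,\Diff_D(\Delta)) + o(1-t)$, whence dividing by $t-1$ and letting $t\to 1^-$ yields $-s(\O_D,\Diff_D(\Delta))$. The main obstacle is this last identification together with the accompanying interchange of limits: one must show that the per-step marginal splitting count converges, uniformly enough in the number of steps, to the $F$-signature of the different pair, controlling the filtration's error terms and the ceilings so that the limits $e\to\infty$ and $t\to 1^-$ may be taken in either order. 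I expect the convexity of $s(R,\Delta+tD)$, which sandwiches the finite differences, to be the key device for promoting the pointwise identification of graded pieces into the desired derivative equality.
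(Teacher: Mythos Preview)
Your outline captures the right heuristic in the Cartier case and is broadly aligned with the paper's argument there: the paper also computes the left derivative along the sequence $t=1-1/p^e$, breaks the length difference $\ell(R/(I_e^\Delta,f^{p^r}))-\ell(R/(I_e^\Delta,f^{p^r-p^{r-e}}))$ into a sum over single powers of $f$ (your ``unit increments''), and identifies the leading term with $\ell(R/(I_r^\Delta,f))=\ell(\O_D/I_r^{\Diff_D(\Delta)})$ via $F$-adjunction. One refinement you should be aware of: the per-step contribution is not literally $a_e(\O_D,\Diff_D(\Delta))$ but rather $\ell(R/(I_r^\Delta,f))$ minus an error $\ell((\bar f^{i+1}:\bar f^i)/(\bar f))$ that depends on $i$. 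The paper does not show these errors are negligible directly; instead it proves the two inequalities separately, bounding the errors above by the last term in the monotone chain $(\bar f^{i+1}:\bar f^i)$ for one direction, and invoking the containment $(I_e^\Delta:f^{p^e-1})^{[p^{r-e}]}\subseteq (I_r^\Delta:f^{p^r-p^{r-e}})$ together with the Hilbert--Kunz interpretation of $F$-signature for the other. Your appeal to convexity alone to ``promote the pointwise identification'' is not how the paper closes this, and you should expect to need an asymmetric argument of this kind.

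The genuine gap is in the $\Q$-Cartier case. You propose to run the same filtration argument with $mD=\dv(f)$, but then the natural increments are by $mD$, not by $D$, and the ``number of unit increments'' becomes $(1-t)(p^e-1)/m+O(1)$ while each increment still contributes (to leading order) the $\O_D$-splitting number. The paper explicitly records (Remark~\ref{rmk:CyclicCoverIsNecessary}) that carrying out the Cartier-case computation directly for $\Q$-Cartier $D$ yields only
\[
-s(\O_D,\Diff_D(\Delta))\ \le\ \frac{\partial_-}{\partial t}\bigg|_{t=1} s(R,\Delta+tD)\ \le\ -\frac{1}{m}\,s(\O_D,\Diff_D(\Delta)),
\]
which is off by exactly the index $m$ on one side. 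The paper's remedy is a cyclic cover $\pi:\Spec C(D)\to\Spec R$ making $\pi^*D$ Cartier, together with the transformation rules $m\cdot s(R,\Delta+tD)=s(C(D),\pi^*\Delta+t\pi^*D)$ and $m\cdot s(\O_D,\Diff_D(\Delta))=s(\O_{D'},\Diff_{D'}(\pi^*\Delta))$; the factors of $m$ then cancel. Your proposal has no mechanism to recover this missing factor, so as written it cannot reach the sharp equality when $D$ is not Cartier. A separate issue you do not address is the case where the index of $K_R+\Delta+D$ is divisible by $p$; the paper handles this by a perturbation $\Delta\rightsquigarrow\Delta+\tfrac{1}{p^e-1}\Delta'$ and a uniform-convergence argument to swap the limit in $e$ with the left derivative.
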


In the case where $R$ is regular and $\Delta = 0$, we have $\Diff_D(\Delta) = 0$, and Theorem \ref{thm:MainTheoremIntro} reduces to \cite[Theorem 4.6]{BST2}. One should interpret Theorem \ref{thm:MainTheoremIntro} as a quantitative version of inversion of adjunction for $F$-regularity. Indeed, as an immediate consequence, we obtain the following inequality.

\begin{corollary}\label{cor:InversionOfAdjunctionForFSignatureIntro}
  With notation as in Theorem \ref{thm:MainTheoremIntro}, we have
  \[
    s(R,\Delta) \geq s(\O_D,\Diff_D(\Delta))
  \]
  with equality if and only if the function $s(R, \Delta + tD)$ is linear on the interval $[0,1]$ with slope $-s(R,\Delta)$.
\end{corollary}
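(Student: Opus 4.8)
The plan is to deduce Corollary~\ref{cor:InversionOfAdjunctionForFSignatureIntro} directly from Theorem~\ref{thm:MainTheoremIntro} together with two soft properties of the $F$-signature function. Write $f(t) = s(R,\Delta + tD)$ and let $f'_-(1)$ denote its left derivative at $t = 1$. By \cite{BST2} the function $f$ is continuous and convex on the interval where it is defined, and since the $F$-signature is a limit of nonnegative quantities we have $f(t) \geq 0$ for every $t$; in particular $f(1) = s(R,\Delta + D) \geq 0$. Theorem~\ref{thm:MainTheoremIntro} supplies the one nontrivial input, namely $f'_-(1) = -s(\O_D,\Diff_D(\Delta))$.

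First I would invoke the standard fact that for a convex function the slope of any secant is bounded above by the left derivative at the right endpoint. Applied to the secant over $[0,1]$ this reads
\[
  f(1) - f(0) = \frac{f(1)-f(0)}{1-0} \leq f'_-(1) = -s(\O_D,\Diff_D(\Delta)).
\]
Rearranging and then discarding the nonnegative term $f(1)$ gives
\[
  s(R,\Delta) = f(0) \geq f(1) + s(\O_D,\Diff_D(\Delta)) \geq s(\O_D,\Diff_D(\Delta)),
\]
which is the asserted inequality.

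For the equality statement I would track exactly when both displayed inequalities become equalities. Equality in the second forces $f(1) = 0$, and equality in the first forces the secant slope over $[0,1]$ to equal $f'_-(1)$, i.e.\ $f(1) - f(0) = f'_-(1)$. Under the latter, for every $t \in (0,1)$ the secant bound $\tfrac{f(1)-f(t)}{1-t} \leq f'_-(1) = f(1)-f(0)$ rearranges to $f(t) \geq t f(1) + (1-t) f(0)$, while convexity gives the reverse inequality; hence $f$ coincides with its chord on $[0,1]$ and is linear there. As $f(1) = 0$, this chord has slope $f(1) - f(0) = -f(0) = -s(R,\Delta)$, as claimed. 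Conversely, if $f$ is linear on $[0,1]$ with slope $-s(R,\Delta) = -f(0)$, then $f(t) = f(0)(1-t)$, so $f(1) = 0$ and $f'_-(1) = -f(0)$, whence $s(\O_D,\Diff_D(\Delta)) = -f'_-(1) = f(0) = s(R,\Delta)$ and equality holds.

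Since Theorem~\ref{thm:MainTheoremIntro} carries all of the substantive weight, I do not expect a genuine obstacle in this deduction. The only point demanding a little care is the equality analysis: one must use the full strength of convexity (the monotonicity of one-sided derivatives, packaged above as the comparison of secant slopes with $f'_-(1)$) rather than convexity as a bare inequality, in order to promote ``the chord slope equals the endpoint derivative'' to honest linearity of $f$ across the entire interval $[0,1]$.
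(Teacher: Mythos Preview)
Your proof is correct and follows essentially the same approach as the paper: both combine Theorem~\ref{thm:MainTheoremIntro} with the convexity of $t \mapsto s(R,\Delta+tD)$ from \cite{BST2}, comparing the secant slope over $[0,1]$ with the left derivative at $t=1$. The only difference is that the paper dispatches the equality characterization in a single sentence (``an immediate consequence of the convexity''), whereas you spell out the two directions carefully; your version is more explicit but not substantively different.
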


Since positivity of the $F$-signature characterizes $F$-regularity, Corollary \ref{cor:InversionOfAdjunctionForFSignatureIntro} recovers and refines the inversion of adjunction for $F$-regularity, i.e. the fact that the $F$-regularity of $(D,\Diff_D(\Delta))$ implies the $F$-regularity of $(R,\Delta)$ \cite{Das}. However, Corollary \ref{cor:InversionOfAdjunctionForFSignatureIntro} is stronger. Indeed, it states that, with respect to $F$-signature, the singularities of $(R,\Delta)$ are no worse than those of $(\O_D,\Diff_D(\Delta))$. In the case where $D$ is Cartier, the inequality of Corollary \ref{cor:InversionOfAdjunctionForFSignatureIntro} recovers and quantifies a classical result of Singh \cite{Singh99a}. Singh's theorem states that if $R$ is $\Q$-Gorenstein and $R/f$ is $F$-regular for some $f \in R$, then $R$ is $F$-regular. Our Corollary \ref{cor:InversionOfAdjunctionFSignatureIntro} yields the attractive inequality $s(R) \geq s(R/f)$.

The $F$-signature is notoriously difficult to compute in examples, but lower bounds often suffice for applications. Using Corollary \ref{cor:InversionOfAdjunctionFSignature}, one can bound the $F$-signature from below by computing (or bounding) the $F$-signature of certain subschemes of $\Spec{R}$ (e.g. complete intersection subschemes).

Theorem \ref{thm:MainTheoremIntro} is anticipated in the study of normalized volume. The normalized volume, defined by Li in \cite{Li18}, is a numerical invariant of klt singularities in characteristic 0. This invariant provides a local theory of stability for klt singularities parallel to the global notion of K-stability for Fano varieties. The normalized volume plays an important role in the current work on moduli of Fano varieties \cite{BlumXu}. There is mounting evidence that the normalized volume is related to $F$-signature via reduction modulo $p$. Although no formal conjectures have been stated, these invariants have been shown to exhibit similar behavior \cite{LiLiuXu,MPST}. Our main result provides further evidence of this connection. In pursuit of a theory of adjunction for normalized volume, Li, Liu, and Xu proved the following.

\begin{theorem}[{\cite[Proposition 6.8]{LiLiuXu}}]\label{thm:LiLiuXuTheoremIntro}
  Let $x \in (X,\Delta)$ be an $n$-dimensional klt singularity. Let $D$ be a normal $\Q$-Cartier divisor containing $x$ such that $(X,D+\Delta)$ is plt. Denote by $\Diff_D(\Delta)$ the different of $\Delta$ on $D$. Then
  \[
  	\frac{\partial_-}{\partial t} \bigg|_{t=1} \frac{\nvol(x,X,\Delta + tD)}{n^n} = \frac{-\nvol(x,D,\Diff_D(\Delta))}{(n-1)^{n-1}}
  \]
\end{theorem}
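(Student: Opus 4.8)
The plan is to pass to the variable $s = 1 - t$ and to study the one-sided derivative at $s = 0^+$. For $s \in (0,1]$ the pair $(X, \Delta + (1-s)D)$ is klt, while at $s = 0$ it is only plt: the valuation $\ord_D$ is a log canonical place, so $A_{X,\Delta+D}(\ord_D) = 0$, and approximating $\ord_D$ by quasi-monomial valuations centered at $x$ shows $\nvol(x, X, \Delta + D) = 0$. Thus the left $t$-derivative we want equals $-\frac{d}{ds}\big|_{s=0^+}\nvol(x,X,\Delta+(1-s)D)/n^n$, and by plt adjunction $(D,\Diff_D(\Delta))$ is klt, so its normalized volume is positive. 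The theorem reduces to showing that this one-sided derivative in $s$ equals $\nvol(x,D,\Diff_D(\Delta))/(n-1)^{n-1}$.

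For the upper bound on $\nvol(x,X,\Delta+(1-s)D)$ I would test against an explicit two-parameter family of quasi-monomial valuations adapted to the flag $\{x\} \subset D \subset X$. Given a valuation $w \in \mathrm{Val}_{D,x}$ and weights $a,b > 0$, let $v_{a,b}$ be the monomial combination $a\,\ord_D + b\,w$, defined by $v_{a,b}(f) = \min_k\{ak + b\,w(\overline{f_k})\}$ for $f = \sum_k f_k t_D^k$ with $t_D$ a local equation of $D$. Two computations drive the estimate. First, adjunction for log discrepancies gives
\[
A_{X,\Delta+(1-s)D}(v_{a,b}) = a\cdot A_{X,\Delta+(1-s)D}(\ord_D) + b\cdot A_{D,\Diff_D(\Delta)}(w) = a s + b\,A_{D,\Diff_D(\Delta)}(w),
\]
using $A_{X,\Delta+(1-s)D}(\ord_D) = 1 - (1-s) = s$. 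Second, a Newton--Okounkov body computation along the flag yields
\[
\vol_X(v_{a,b}) = \frac{\vol_D(w)}{a\,b^{\,n-1}}.
\]
Writing $A_w = A_{D,\Diff_D(\Delta)}(w)$ and $V_D = \vol_D(w)$, minimizing the homogeneous quantity $(as + bA_w)^n\,V_D/(a\,b^{\,n-1})$ over $a,b>0$ is elementary: the optimum is at $a = A_w/((n-1)s)$, $b=1$, and produces $s\,\frac{n^n}{(n-1)^{n-1}}\,A_w^{\,n-1}V_D$. Since $A_w^{\,n-1}V_D$ is exactly the normalized-volume functional of $(D,\Diff_D(\Delta))$ evaluated at $w$, taking the infimum over $w$ gives
\[
\nvol(x,X,\Delta+(1-s)D) \le s\,\frac{n^n}{(n-1)^{n-1}}\,\nvol(x,D,\Diff_D(\Delta)),
\]
which, after dividing by $n^n$ and differentiating at $s=0$, yields one of the two inequalities.

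The reverse inequality is the crux, and I expect it to be the main obstacle. Here one must show that as $s \to 0^+$ \emph{no} valuation beats the quasi-monomial family asymptotically, i.e. that near-minimizers of $\nvol(x,X,\Delta+(1-s)D)$ necessarily degenerate to the flag-monomial ansatz and restrict to near-minimizers of $\nvol(x,D,\Diff_D(\Delta))$. The natural strategy is to invoke the existence and quasi-monomiality of minimizers (Blum, Li--Xu) together with properness/compactness for normalized valuations of bounded log discrepancy, to extract a limit of rescaled minimizers $v_{s_j}$ as $s_j \to 0$, and to control its restriction to $D$ via lower semicontinuity of log discrepancy and the behaviour of volume under the degeneration. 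The delicate points are that a minimizer on $X$ may be a genuinely higher-rank quasi-monomial valuation rather than the simple combination $a\,\ord_D + b\,w$, and that one must match not merely the limiting valuation but the precise first-order rate in $s$; reconciling these requires a careful comparison of $\vol_X$ with the volume of the induced valuation on $D$, most cleanly carried out by degenerating to the central fibre (a cone over $D$) and reducing the estimate to the computation already performed. Combining the two inequalities gives equality in $s$, and translating back through $t = 1 - s$ yields the stated formula.
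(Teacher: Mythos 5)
This statement is not proved in the paper at all: it is quoted from Li--Liu--Xu as motivation, and the paper's own contribution is the characteristic-$p$ analogue (Theorem 5.1), proved by entirely different means (Cartier subalgebras, Frobenius splittings, cyclic covers). So there is no internal proof to compare your proposal against; it has to be judged against the known argument for the cited result, and judged that way it is half of a proof. The half you do give is correct: the reduction to $s = 1-t$, the vanishing $\nvol(x,X,\Delta+D)=0$, the adjunction formula $A_{X,\Delta+(1-s)D}(v_{a,b}) = as + b\,A_{D,\Diff_D(\Delta)}(w)$, the Okounkov-body volume formula $\vol_X(v_{a,b}) = \vol_D(w)/(ab^{n-1})$, and the optimization at $a = A_w/((n-1)s)$ all check out, and together they yield
\[
  \nvol(x,X,\Delta+(1-s)D) \ \le\ s\,\frac{n^n}{(n-1)^{n-1}}\,\nvol(x,D,\Diff_D(\Delta)),
\]
hence the inequality $\frac{\partial_-}{\partial t}\big|_{t=1}\nvol/n^n \ \ge\ -\nvol(x,D,\Diff_D(\Delta))/(n-1)^{n-1}$. (One technical caveat: $D$ is only $\Q$-Cartier, so a local equation $t_D$ need not exist; the combination $a\,\ord_D + b\,w$ must be defined through the filtration by symbolic powers $\mathfrak{p}^{(k)}$, i.e., through the degeneration to the orbifold cone over $D$, not through a series expansion in $t_D$.)

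The genuine gap is the reverse inequality, which you yourself flag as ``the crux'' and leave as a strategy sketch. The sketch (compactness of minimizers, limits of rescaled minimizers $v_{s_j}$, semicontinuity) stalls exactly at the obstacles you name: a minimizer need not be of the form $a\,\ord_D + b\,w$, and extracting a limiting valuation does not by itself recover the first-order rate in $s$. What a complete argument needs is a bound valid for \emph{every} valuation $v$ centered at $x$, not just for near-minimizers along a subsequence; concretely, the scale-invariant inequality
\[
  A_{X,\Delta+D}(v)^{n-1}\, v(D)\,\vol_X(v) \ \ge\ \nvol(x,D,\Diff_D(\Delta)),
\]
after which your own optimization, applied pointwise in $v$ with $\mathbf{A} = A_{X,\Delta+D}(v)$ and using $A_{X,\Delta+(1-s)D}(v) = \mathbf{A} + s\,v(D)$, immediately gives $\nvol(x,X,\Delta+(1-s)D) \ge s\,\frac{n^n}{(n-1)^{n-1}}\nvol(x,D,\Diff_D(\Delta))$. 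Establishing that uniform inequality is where the real work lies; in the literature it is handled not by minimizer compactness but by degenerating valuations to the orbifold cone over $(D,\Diff_D(\Delta))$ (valuation ideals pass to initial ideals, so volumes can only decrease, while log discrepancies are controlled along the degeneration). As written, your proposal proves one of the two inequalities and asserts a plan for the other, so it does not yet constitute a proof of the stated equality.
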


Up to normalization, this equality is precisely the formula in our main theorem. Thus $F$-signature and normalized volume behave identically with respect to adjunction along codimension one centers, providing further evidence that the normalized volume and $F$-signature are analogous invariants. Apart from the inherent interest in such connections, the interplay between characteristic 0 and characteristic $p$ techniques has been mutually beneficial to commutative algebra and birational geometry. It is reasonable to expect that further understanding of the relationship between the normalized volume and the $F$-signature will improve our understanding of both invariants. We hope that our result will promote further study into the connection between the normalized volume and the $F$-signature.

The paper is structured as follows. After reviewing some background material in Section \ref{sec:Background}, we apply Schwede's theory of $F$-adjunction \cite{SchFad} to the $F$-signature function in Section \ref{sec:F-Adjunction}. Section \ref{sec:VeroneseCovers} reviews the cyclic cover construction and transformation rules for $F$-signature under finite morphisms. This construction allows us to reduce to the case where $D$ is Cartier. The proof of the main theorem appears in Section \ref{sec:InversionOfAdjunction}. We conclude with Section \ref{sec:Corollaries} in which we discuss some corollaries and directions for further inquiry, particularly in the direction of understanding the relationship between $F$-signature and normalized volume.

\subsection*{Acknowledgements}
The author would like to thank his advisor, Kevin Tucker, for his support and encouragement as well as his suggestion that a version of Theorem \ref{thm:LiLiuXuTheoremIntro} might hold for $F$-signature. The author would also like to thank Javier Carvajal-Rojas for helpful comments on a previous draft.

\section{Cartier Subalgebras, $F$-Signature, and Hilbert-Kunz Multiplicity}\label{sec:Background}

\subsection{Conventions}\label{subsec:Conventions}

All rings are assumed to be a commutative with 1, Noetherian, and of characteristic $p > 0$ unless otherwise specified. If $R$ is such a ring, we have the \emph{Frobenius maps} $F^e : R \to R$ given by $r \mapsto r^{p^e}$ for each $e \geq 1$. If $I \subseteq R$, we denote by $I^{[p^e]}=(a^{p^e} : a \in I)$, the $p^e$th \emph{Frobenius power} of $I$. If $M$ is an $R$-module, we denote by $F^e_*M$ the restriction of scalars of $M$ along $F^e$. That is, $R$ acts on $F^e_*M$ via $r\cdot F_*^em = F_*^e(r^{p^e}m)$. A ring $R$ is \emph{$F$-finite} if $F_*R$ is a finite $R$-module. 
\begin{center}
\emph{In what follows, we assume that all rings of characteristic $p > 0$ are $F$-finite.}
\end{center}
Note that any ring essentially of finite type over an $F$-finite field is $F$-finite.

The \emph{index} of a $\Q$-Cartier $\Q$-divisor $\Delta$ is the smallest positive integer $m$ such that $m\Delta$ is Cartier. A pair $(R, \Delta)$ is said to be $\Q$-Gorenstein if $K_X + \Delta$ is $\Q$-Cartier on $X = \Spec{R}$, and the index of the pair $(X,\Delta)$ is the index of $K_X+\Delta$.  For divisors $D = \sum a_i D_i$ and $D' = \sum a'_i D_i$, we define $D \wedge D' = \sum \min\{a_i,a'_i\}D_i$.

\subsection{Cartier Subalgebras}\label{subsec:CartierSubalgebras}

The primary reference for this subsection is \cite{BST1}. Other useful sources include \cite{Bli,BliSch,SchTuc}. 

Let $M$ and $N$ be $R$-modules. A \emph{$p^{-e}$-linear map} from $M$ to $N$ is an $R$-linear map $\phi : F^e_*M \to N$. The abelian group $\Hom_R(F^e_*M,N)$ has an $R$-module structure via post-multiplication and an $F^e_*R$-module structure via pre-multiplication. The direct sum 
\[
  \mscr{C}^R = \bigoplus_{e \geq 0} \mscr{C}^R_e = \bigoplus_{e \geq 0}  \Hom_R(F^e_*R,R)
\] 
has a natural (non-commutative) graded $R$-algebra structure where multiplication of homogeneous elements in $\mscr{C}^R$ is given by composition. That is, if $\phi_e \in \Hom_R(F^e_*R,R)$ and $\phi_f \in \Hom_R(F^f_*R,R)$, then $\phi_e\cdot \phi_f$ is defined to be the composition
\begin{center}
  \begin{tikzcd}
    F^{e+f}_*R  \arrow[rr,"F^f_*\phi_e"] & &  F^f_*R \arrow[rr,"\phi_f"] & & R.
  \end{tikzcd}
\end{center}

\begin{definition}[{\cite[Definition 2.4]{BST1}}]\label{def:CartierAlgebra}
  The ring $\mscr{C}^R$ constructed above is the \emph{(total) Cartier algebra} of $R$. A \emph{Cartier subalgebra} on $R$ is a graded $\bb{F}_p$-subalgebra $\mscr{D} = \bigoplus_{e \geq 0} \mscr{D}_e$ of $\mscr{C}^R$ such that $\mscr{D}_0 = \Hom_R(R,R) = R$ and $\mscr{D}_e \not= 0$ for some $e >0$. For a Cartier subalgebra $\mscr{D}$, the set $\Gamma_{\mscr{D}} := \{e \in \Z_{\geq 0} : \mscr{D}_e \not= 0\}$ is a semigroup. 
\end{definition}

Our main interest is in Cartier subalgebras arising from triples $(R,\Delta,\mf{a}^t)$ where $R$ is a normal ring, $\Delta$ is an effective $\Q$-divisor on $\Spec{R}$, $\mf{a} \subseteq R$ is an ideal not contained in a minimal prime of $R$, and $t$ is a positive real number. Given such a triple $(R,\Delta,\mf{a}^t)$, we define 
\[
  \mscr{C}^{\Delta,\mf{a}^t} = \bigoplus_{e \geq 0} \mscr{C}^{\Delta,\mf{a}^t}_e = \bigoplus_{e \geq 0} \Hom_R(F_*R(\lceil (p^e-1)\Delta \rceil), R) \cdot F_*^e\mf{a}^{\lceil t(p^e-1) \rceil}.
\]
For an ideal $J \subseteq R$, we denote by $\Hom_R(F_*^eM,N)\cdot F_*^e J$ the submodule consisting of elements of the form $F_*^ea \cdot \phi$ where $a \in J$ and $\phi \in \Hom_R(F^e_*M,N)$. 

The classical definitions of $F$-regularity and $F$-purity extend to general Cartier subalgebras.

\begin{definition}[{\cite{HocRob,HocHun,SchTId,BST1}}]\label{def:F-regular_F-pure}
  Let $\mscr{D}$ be a Cartier subalgebra on a local ring $R$. We say $(R,\mscr{D})$ is \emph{(sharply) $F$-pure} if there is a surjective homomorphism in $\mscr{D}_e$ for some $e>0$. We say $(R,\mscr{D})$ is \emph{(strongly) $F$-regular} if for all $c \in R$ not contained in a minimal prime, there exists $\phi \in \mscr{D}_e$ for some $e$ such that $\phi(F_*^ec) = 1$. A triple $(R,\Delta, \mf{a}^t)$ is called \emph{$F$-pure} (resp. \emph{$F$-regular}) if the Cartier subalgebra $\mscr{C}^{\Delta, \mf{a}^t}$ is $F$-pure (resp. $F$-regular).
\end{definition}

\begin{proposition}[{\cite{AbeEne,SchCen,BST1}}]\label{prop:SplittingPrime}
  Let $\mscr{D}$ be a Cartier subalgebra on a local ring $(R,\mf{m})$. For every $e \geq 1$, let $I_e^{\mscr{D}} =\{r \in R: \phi(F^e_*r) \in \mf{m} \text{ for all } \phi \in \mscr{D}_e\}$. The ideal
  \[
    P_{\mscr{D}} := \bigcap_{e \in \Gamma_{\mscr{D}}} I_e^{\mscr{D}}.
  \] 
  is called the \emph{splitting prime} of $(R,\mscr{D})$. The ideal $P_\mscr{D}$ is proper if and only if $(R,\mscr{D})$ is $F$-pure, and in this case, $P_{\mscr{D}}$ is prime. Furthermore, $P_{\mscr{D}} = 0$ if and only if $(R,\mscr{D})$ is $F$-regular.
\end{proposition}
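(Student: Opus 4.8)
The plan is to first record that each $I_e^{\mscr{D}}$ is an ideal, so that $P_{\mscr{D}}$ is an ideal, and then to establish the three assertions---properness, primality, and the characterization of $F$-regularity---in turn, with primality being the substantive one. The structural fact I would use throughout is that, since $\mscr{D}$ is a graded $R$-algebra with $\mscr{D}_0 = R$, each piece $\mscr{D}_e$ is closed under both pre- and post-multiplication by elements of $R$: for $\phi \in \mscr{D}_e$ and $a \in R$, the maps $F^e_* x \mapsto a\,\phi(F^e_* x)$ and $F^e_* x \mapsto \phi(F^e_*(ax))$ both lie in $\mscr{D}_e$, being the products $a \cdot \phi$ and $F^e_* a \cdot \phi$ in the algebra structure. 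Applying the second of these to the defining condition shows immediately that $I_e^{\mscr{D}}$ is stable under multiplication by $R$, hence is an ideal, and therefore so is $P_{\mscr{D}} = \bigcap_{e \in \Gamma_{\mscr{D}}} I_e^{\mscr{D}}$.

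For properness I would observe that, since $R$ is local, $\phi \in \mscr{D}_e$ is surjective if and only if $\phi(F^e_* r)$ is a unit for some $r$, and in that case $\psi := F^e_* r \cdot \phi \in \mscr{D}_e$ satisfies $\psi(F^e_* 1) = \phi(F^e_* r) \notin \mf{m}$; conversely any $\psi$ with $\psi(F^e_* 1) \notin \mf{m}$ is itself surjective. Hence $1 \notin I_e^{\mscr{D}}$ for some $e \in \Gamma_{\mscr{D}}$ precisely when some $\mscr{D}_e$ contains a surjection, i.e. $P_{\mscr{D}}$ is proper if and only if $(R,\mscr{D})$ is $F$-pure.

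The heart of the argument, and the step I expect to be the main obstacle, is primality. Assume $(R,\mscr{D})$ is $F$-pure, so $P := P_{\mscr{D}}$ is proper, and suppose $xy \in P$ with $x \notin P$. Since $x \notin P$ there are $e \in \Gamma_{\mscr{D}}$ and $\phi \in \mscr{D}_e$ with $\phi(F^e_* x) \notin \mf{m}$, and post-multiplying by the inverse unit I may arrange $\phi(F^e_* x) = 1$. Now fix any $e' \in \Gamma_{\mscr{D}}$ and any $\psi \in \mscr{D}_{e'}$; the goal is $\psi(F^{e'}_* y) \in \mf{m}$. I would form the composite $\phi \cdot \psi \in \mscr{D}_{e+e'}$, noting $e+e' \in \Gamma_{\mscr{D}}$ because $\Gamma_{\mscr{D}}$ is a semigroup, and evaluate it on $F^{e+e'}_*(xy^{p^e})$. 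Since $F^e_*(xy^{p^e}) = y \cdot F^e_* x$ and $\phi(F^e_* x) = 1$, the inner application gives $\phi(F^e_*(xy^{p^e})) = y$, so the composite evaluates to $\psi(F^{e'}_* y)$. On the other hand $xy^{p^e} = (xy)\,y^{p^e-1} \in P \subseteq I_{e+e'}^{\mscr{D}}$, so the value of $\phi \cdot \psi \in \mscr{D}_{e+e'}$ on $F^{e+e'}_*(xy^{p^e})$ lies in $\mf{m}$. Combining the two computations gives $\psi(F^{e'}_* y) \in \mf{m}$; as $e'$ and $\psi$ were arbitrary, $y \in P$, proving $P$ prime.

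Finally, for the characterization of $F$-regularity I would argue both directions directly. If $P_{\mscr{D}} = 0$ and $c$ lies in no minimal prime, then $c$ is not nilpotent (the nilradical is the intersection of the minimal primes), so $c \neq 0$ and thus $c \notin \bigcap_e I_e^{\mscr{D}}$; hence $\phi(F^e_* c) \notin \mf{m}$ for some $\phi \in \mscr{D}_e$, and scaling by a unit yields $\phi(F^e_* c) = 1$, so $(R,\mscr{D})$ is $F$-regular. Conversely, if $(R,\mscr{D})$ is $F$-regular then for each such $c$ there is $\phi \in \mscr{D}_e$ with $\phi(F^e_* c) = 1 \notin \mf{m}$, so $c \notin I_e^{\mscr{D}} \supseteq P_{\mscr{D}}$; thus every element of $P_{\mscr{D}}$ lies in a minimal prime, and since $P_{\mscr{D}}$ is prime (by the previous step, as $F$-regular implies $F$-pure), prime avoidance forces $P_{\mscr{D}}$ to equal a minimal prime, which is $(0)$ in the domain setting relevant here. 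Beyond the composition computation, the only delicate points are the semigroup bookkeeping for $\Gamma_{\mscr{D}}$ and the bimodule structure of the $\mscr{D}_e$, which I would isolate as a preliminary remark.
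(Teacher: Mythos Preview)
The paper does not supply a proof of this proposition; it is stated with citations to the literature. Your argument is the standard one from those sources and is essentially correct. The composition trick for primality---evaluating $\phi \cdot \psi \in \mscr{D}_{e+e'}$ on $F^{e+e'}_*(xy^{p^e})$, using that $F^e_*(xy^{p^e}) = y \cdot F^e_* x$ so the inner $\phi$ returns $y$, and then invoking $xy^{p^e} \in P_{\mscr{D}} \subseteq I_{e+e'}^{\mscr{D}}$---is exactly the right idea, and your bookkeeping with the bimodule structure of $\mscr{D}_e$ and the semigroup $\Gamma_{\mscr{D}}$ is fine.

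One step should be tightened. In the final paragraph you reduce the backward implication to showing that every element of $P_{\mscr{D}}$ lies in a minimal prime and then invoke ``the domain setting relevant here'' to conclude $P_{\mscr{D}} = 0$. Since the proposition is stated for an arbitrary $F$-finite local ring, you must justify that $R$ is in fact a domain. This follows because $\mscr{D}_e \subseteq \mscr{C}^R_e$, so $F$-regularity of $(R,\mscr{D})$ forces strong $F$-regularity of $R$ itself, and strongly $F$-regular $F$-finite local rings are normal, hence domains. With that observation in hand, your prime-avoidance argument (together with the already-established primality of $P_{\mscr{D}}$) yields $P_{\mscr{D}} = (0)$ as claimed.
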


\begin{remark}\label{rmk:DegeneracyIdealNotation}
	The ideal $I_e^\mathscr{D}$ is sometimes called the $e$-th \emph{degeneracy ideal} of $\mathscr{D}$. When $\mathscr{D}$ is the Cartier subalgebra associated to a triple $(R,\Delta,\mf{a}^t)$, we write $I_e^{\Delta,\mf{a}^t}$ rather than $I_e^{\mathscr{D}}$.
\end{remark}

\begin{definition}[{\cite{BST1,SchCen}}]\label{def:CompatibleIdealAndInducedSubalgebra}
  Let $\mscr{D}$ be a Cartier subalgebra on $R$. An ideal $J \subseteq R$ is \emph{$\mscr{D}$-compatible} if for every $e \in \Gamma_\mscr{D}$ and every $\phi \in \mscr{D}_e$, we have $\phi(F^e_*J) \subseteq J$. In this case, there is an induced map $\phi_J : F^e_*(R/J) \to R/J$ which makes the following diagram commute
  \begin{center}
    \begin{tikzcd}
      F^e_*R \arrow[r, "\phi"] \arrow[d] & R \arrow[d] \\
      F^e_*(R/J) \arrow[r, "\phi_J"] & R/J
    \end{tikzcd}
  \end{center}
  where the vertical arrows are the natural quotient maps. The \emph{induced Cartier subalgebra} $\mscr{D}_J$ on $R/ J$ is the Cartier subalgebra whose $e$th graded piece is $(\mscr{D}_J)_e := \{\phi_J : \phi \in \mscr{D}_e\}$.
\end{definition}

\begin{remark}\label{rmk:SplittingPrimeLargestCompatibleIdeal}
  For a pair $(R,\mscr{D})$, the splitting prime $P_{\mscr{D}}$ is the largest $\mscr{D}$-compatible ideal. The subscheme defined by $P_{\mscr{D}}$ is called the \emph{minimal center of $F$-purity} \cite{SchCen}.
\end{remark}

\begin{example}[{cf. \cite[Section 7]{SchFad}}]\label{ex:mainSetupExample}
  Suppose $(R,\Delta)$ is a pair where $R$ is a normal local domain and $\Delta$ is an effective $\Q$-divisor. Suppose $D$ is a normal, reduced, irreducible $\Q$-Cartier divisor on $\Spec{R}$ which is not a component of $\Supp\Delta$. Let $\mf{p}$ be the ideal defining $D$. Suppose $K_X+\Delta + D$ is $\Q$-Cartier with index prime to $p$. The
   pair $(X,\Delta+D)$ is $F$-pure at the generic point of $D$ since $X$ is normal (and hence regular in codimension 1). Also $\mf{p}$ is $\mscr{C}^{\Delta + D}$-compatible, so $\mf{p} \subseteq P_{\mscr{C}^{\Delta + D}}$.
\end{example}

Before concluding this subsection, we record an inclusion of ideals which will be used in the proof of the main theorem.

\begin{lemma}[{cf. \cite[Lemma 4.4]{Tuck}}]\label{lemma:ColonFrobeniusPowerContainment}
  Suppose $R$ is a reduced local ring and $\Delta$ is an effective $\Q$-divisor on $\Spec{R}$. For any element $f \in R$ not in a minimal prime of $R$ and positive integers $r \geq e$, we have 
  \[(I^\Delta_e : f)^{[p^{r-e}]} \subseteq (I^\Delta_r : f^{p^{r-e}}).\]
\end{lemma}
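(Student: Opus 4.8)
The plan is to strip away the element $f$, reduce the statement to a single Frobenius-power containment of degeneracy ideals, and then prove that containment by pushing a Cartier operator of level $r$ down to level $e$ through the relative Frobenius. Since $(I_r^\Delta : f^{p^{r-e}})$ is an ideal, it suffices to show that each generator $a^{p^{r-e}}$ of $(I_e^\Delta : f)^{[p^{r-e}]}$ lies in it, i.e. that $f^{p^{r-e}}a^{p^{r-e}} = (fa)^{p^{r-e}} \in I_r^\Delta$ whenever $fa \in I_e^\Delta$. Writing $z = fa$ and $s = r-e$, the factor $f$ disappears and the lemma becomes equivalent to the implication
\[
  z \in I_e^\Delta \implies z^{p^{s}} \in I_r^\Delta,
\]
that is, to $(I_e^\Delta)^{[p^s]} \subseteq I_r^\Delta$.

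To prove this implication I would unwind the definition of $I_r^\Delta$ from Proposition \ref{prop:SplittingPrime}: it is enough to fix an arbitrary $\psi \in \mscr{C}_r^\Delta$ and show $\psi(F_*^r z^{p^s}) \in \mf{m}$. The natural device is to feed $\psi$ through the $p^s$-power map to manufacture a level-$e$ operator. Concretely, define $\phi$ on $F_*^e R(\lceil (p^e-1)\Delta\rceil)$ by $\phi(F_*^e w) := \psi(F_*^r w^{p^s})$. A direct check using $(a^{p^e})^{p^s} = a^{p^r}$ and the identity $F_*^r(a^{p^r}v) = a \cdot F_*^r v$ shows $\phi$ is $R$-linear, and by construction $\phi(F_*^e z) = \psi(F_*^r z^{p^s})$. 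Granting that $\phi$ is a genuine element of $\mscr{C}_e^\Delta$, the hypothesis $z \in I_e^\Delta$ forces $\phi(F_*^e z) \in \mf{m}$, which is exactly what we need; since this holds for every $\psi \in \mscr{C}_r^\Delta$ we conclude $z^{p^s} \in I_r^\Delta$. When $\Delta = 0$ this is immediate and recovers the argument of \cite[Lemma 4.4]{Tuck}: there $\phi$ automatically lies in $\Hom_R(F_*^e R, R) = \mscr{C}_e$ with no side condition.

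The main obstacle, and the one feature genuinely new beyond Tucker's divisor-free setting, is precisely the verification that $\phi \in \mscr{C}_e^\Delta$, i.e. that the formula $w \mapsto \psi(F_*^r w^{p^s})$ extends from $F_*^e R$ over the twisted module $F_*^e R(\lceil(p^e-1)\Delta\rceil)$. This hinges on how the $p^s$-power map interacts with the rounding $\lceil(p^e-1)\Delta\rceil$: one needs $w^{p^s}$ to remain a section of $R(\lceil(p^r-1)\Delta\rceil)$ for every section $w$ of $R(\lceil(p^e-1)\Delta\rceil)$, so that $\psi$ can legitimately be evaluated on it. Controlling the ceilings component by component is the delicate point, since the level-$e$ and level-$r$ roundings of $\Delta$ need not scale naively under Frobenius. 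I expect the cleanest route is to first invoke the cyclic-cover reduction of Section \ref{sec:VeroneseCovers} to arrange that the components of $\Delta$ are Cartier; there $I_e^\Delta$ becomes a colon $(I_e : g_e)$ by an explicit element $g_e$, the twist is transparent, and the required compatibility can be verified directly before feeding it back into the composition above.
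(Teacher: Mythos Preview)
Your overall strategy is exactly the paper's. The paper's one-line argument writes
\[
  \phi\bigl(F_*^r((fa)^{p^{r-e}})\bigr) = \phi|_{F_*^eR}\bigl(F_*^e(fa)\bigr) \in \mf{m}
\]
for $\phi \in \mscr{C}_r^\Delta$ and $a \in (I_e^\Delta:f)$, tacitly assuming that the restriction $\phi|_{F_*^eR}$ along the relative Frobenius lies in $\mscr{C}_e^\Delta$. You are right to flag this as the only step with content beyond the $\Delta = 0$ case; the paper simply asserts it.

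Unfortunately your proposed fix cannot work, because the containment you reduce to, $(I_e^\Delta)^{[p^{r-e}]} \subseteq I_r^\Delta$, is \emph{false} in general---hence so is the lemma as literally stated (take $f$ a unit). For instance let $R = k\llbracket x\rrbracket$ with $p = 2$, $\Delta = \tfrac{1}{4}\div(x)$, $e = 1$, $r = 2$. Since $\lceil(p-1)\cdot\tfrac{1}{4}\rceil = \lceil(p^2-1)\cdot\tfrac{1}{4}\rceil = 1$, for both levels $\mscr{C}_e^\Delta$ consists of the maps $\Phi_e(F_*^e c\cdot -)$ with $x \mid c$, and one checks $I_1^\Delta = (x)$ while $I_2^\Delta = (x^3)$; thus $(I_1^\Delta)^{[2]} = (x^2) \not\subseteq (x^3)$. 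Concretely, the generator $\Phi_2(F_*^2 x\cdot -)$ of $\mscr{C}_2^\Delta$ restricts along $F_*R \hookrightarrow F_*^2R$ to $\Phi_1$, which does \emph{not} lie in $\mscr{C}_1^\Delta$. The obstruction is the failure of the divisor inequality $p^{r-e}\lceil(p^e-1)\Delta\rceil \leq \lceil(p^r-1)\Delta\rceil$ when a coefficient of $\Delta$ is small and positive. Your cyclic-cover idea, which makes the \emph{components} of $\Delta$ Cartier, is beside the point---in the counterexample $\div(x)$ is already principal; the trouble is the non-integral \emph{coefficient} $\tfrac{1}{4}$, and no finite cover eliminates that.
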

\begin{proof}
  If $a \in (I^\Delta_e:f)$ and $\phi \in \Hom_R(F_*^rR(\lceil (p^r-1)\Delta \rceil),R)$, then
  \[
    \phi(F_*^r(f^{p^{r-e}}a^{p^{r-e}})) = \phi|_{F_*^eR}(F_*^e(fa)) \in \mf{m}.
  \]
  as required. 
\end{proof}

\subsection{$F$-Signature and $F$-Splitting Ratio}\label{subsec:F-Signature}

Let $R$ be a local ring of characteristic $p>0$. The $F$-signature is a numerical invariant first implicitly considered in \cite{SmiVdB} and formally defined in \cite{HunLeu} (see \cite{Hun} for a survey). The $F$-signature measures the severity of the singularity of $R$ by recording the asymptotic behavior of the number of splittings of $F^e_*R$ as an $R$-module. To measure singularities of a pair $(R,\mscr{D})$ where $\mscr{D}$ is a Cartier subalgebra, we count those splittings that lie in $\mscr{D}$. More precisely, a free direct summand $R^{\oplus n} \subseteq F^e_*R$ is called a \emph{$\mscr{D}$-summand} if each of the associated projection maps $\phi \in \Hom_R(F_*^eR,R)$ lies in $\mscr{D}_e$.

\begin{definition}[{\cite{BST1,BST2,Tuck}}]\label{def:F-signature}
  The \emph{$e$-th $F$-splitting number} of a pair $(R,\mscr{D})$, denoted $a_e^{\mscr{D}}$, is the maximal rank of a $\mscr{D}$-summand of $F^e_*R$ as an $R$-module. If $d = \dim{R}$ and $\alpha(R) = \log_p([k:k^p])$, the limit
  \[
    s(R,\mscr{D}) = \lim_{e \to \infty} \frac{a_e^{\mscr{D}}}{p^{e(d+\alpha(R))}}
  \]
  exists and is called the \emph{$F$-signature} of $(R,\mscr{D})$. When $\mscr{D} = \mscr{C}^{\Delta,\mf{a}^t}$ is the Cartier subalgebra associated to a triple $(R,\Delta, \mf{a}^t)$, we write $s(R,\Delta, \mf{a}^t)$.
\end{definition}

The next proposition is useful for computations. One would not lose much in what follows by taking it as the definition of $F$-signature.

\begin{proposition}[{\cite[Proposition 2.2]{BST2}}]\label{prop:F-signatureInTermsOfLength}
  Let $(R,\Delta,\mf{a}^t)$ be a triple. Then 
  \[
    s(R,\Delta,\mf{a}^t) = \lim_{e \to \infty} \frac{1}{p^{ed}}\ell_R \left( \frac{R}{(I_e^\Delta : \mf{a}^{\lceil t(p^e-1) \rceil})} \right).
  \]
  We also have
  \[
    s(R,\Delta,\mf{a}^t) = \lim_{e \to \infty} \frac{1}{p^{ed}}\ell_R \left( \frac{R}{(I_e^\Delta : \mf{a}^{\lceil tp^e \rceil})} \right).
  \]
\end{proposition}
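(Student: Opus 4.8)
The plan is to prove Proposition \ref{prop:F-signatureInTermsOfLength}, which reconciles the "number of splittings" definition of $F$-signature with a length-theoretic formula, and moreover shows that replacing $\mf{a}^{\lceil t(p^e-1)\rceil}$ by $\mf{a}^{\lceil tp^e\rceil}$ does not change the limit. The key structural fact I would exploit is the identity $a_e^{\mscr{C}^{\Delta,\mf{a}^t}} = \ell_R\!\left(R/(I_e^\Delta : \mf{a}^{\lceil t(p^e-1)\rceil})\right)$ relating the splitting number to the colon ideal. To establish this, I would first recall that for an $F$-finite local ring $R$, the maximal rank of a free summand of $F^e_*R$ that splits off using maps from the Cartier subalgebra $\mscr{D}$ equals $\ell_R(R/I_e^{\mscr{D}})$, where $I_e^{\mscr{D}}$ is the degeneracy ideal of Proposition \ref{prop:SplittingPrime}. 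This is the crux of how splitting numbers get repackaged as colengths, and it is essentially the content of \cite{Tuck} and \cite{BST1}; the point is that the image of the evaluation-at-each-$F_*^ec$ maps detects exactly the non-free part, and the free rank is the colength of the locus where all $\mscr{D}$-maps land in $\mf{m}$.

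First I would unwind the degeneracy ideal for the specific Cartier subalgebra $\mscr{D} = \mscr{C}^{\Delta,\mf{a}^t}$. By definition its $e$th graded piece is $\Hom_R(F_*R(\lceil(p^e-1)\Delta\rceil),R)\cdot F_*^e\mf{a}^{\lceil t(p^e-1)\rceil}$, so an element $r$ lies in $I_e^{\mscr{D}}$ precisely when $\phi(F_*^e(ar)) \in \mf{m}$ for every $\phi$ in the $\Delta$-twisted Hom and every $a \in \mf{a}^{\lceil t(p^e-1)\rceil}$. Pulling the $a$ inside, this says $ar \in I_e^\Delta$ for all such $a$, i.e. $r \in (I_e^\Delta : \mf{a}^{\lceil t(p^e-1)\rceil})$. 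Hence $I_e^{\mscr{D}} = (I_e^\Delta : \mf{a}^{\lceil t(p^e-1)\rceil})$, and combining with the colength-equals-splitting-number identity and dividing by $p^{e(d+\alpha(R))}$ gives the first formula after passing to the limit. Here I would note that $\ell_R(R/I_e^{\mscr{D}})$ is the correct replacement for $a_e^{\mscr{D}}$ only up to the $p^{e\alpha(R)}$ normalization factor coming from $[k:k^p]$; the colength formula absorbs the residue-field rank automatically, which is why the denominator in Proposition \ref{prop:F-signatureInTermsOfLength} is $p^{ed}$ rather than $p^{e(d+\alpha(R))}$.

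For the second formula, the task is to show $\lim_e p^{-ed}\ell_R\!\left(R/(I_e^\Delta : \mf{a}^{\lceil tp^e\rceil})\right)$ equals the same limit with exponent $\lceil t(p^e-1)\rceil$. The two exponents differ by at most a bounded amount: $\lceil tp^e\rceil - \lceil t(p^e-1)\rceil$ is $O(1)$, in fact bounded by $\lceil t\rceil + 1$ independent of $e$. I would argue that changing the colon ideal's second argument by a bounded Frobenius-independent power perturbs the colength by a term of lower order than $p^{ed}$. The natural tool is a squeezing estimate: since $\mf{a}^{m+k}\subseteq \mf{a}^m$, we get nested colon ideals and hence the two colengths are comparable, and the difference can be controlled by the colength of a quotient by a fixed power of $\mf{a}$ times a Frobenius twist, which is $o(p^{ed})$. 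This kind of "the correction term washes out in the limit" estimate is standard in the Hilbert--Kunz and $F$-signature literature (compare the convention-independence results in \cite{BST1,Tuck}).

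The main obstacle I anticipate is justifying rigorously that the bounded shift in the exponent of $\mf{a}$ produces only an $o(p^{ed})$ change in colength, rather than merely a bounded change; one must ensure the error does not accumulate across the infinitely many values of $e$ in a way that survives after dividing by $p^{ed}$. The cleanest route is probably to bound the difference of the two colengths by $\ell_R\!\left(R/(I_e^\Delta:\mf{a}^{\lceil t(p^e-1)\rceil})\right) - \ell_R\!\left(R/(I_e^\Delta:\mf{a}^{\lceil tp^e\rceil})\right)$ using the containment of colon ideals, and then to recognize this difference as itself a colength that, by the existence of the $F$-signature limit for the intermediate triples, converges after normalization to a difference of $F$-signatures that vanishes because the intermediate coefficient shifts are negligible in the continuity of $s(R,\Delta,\mf{a}^t)$ in $t$. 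In other words, I would lean on the already-established existence of the limit (the hard analytic input, due to \cite{Tuck,BST2}) and use continuity or a direct squeeze to equate the two normalizations, so that the genuinely new work reduces to the clean ideal-theoretic identification of $I_e^{\mscr{D}}$ with the colon ideal.
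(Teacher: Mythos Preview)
The paper does not prove Proposition \ref{prop:F-signatureInTermsOfLength} itself; it is stated with a citation to \cite[Proposition 2.2]{BST2} and no proof is given. Your sketch is essentially the argument that underlies that reference: the identification $I_e^{\mscr{C}^{\Delta,\mf{a}^t}} = (I_e^\Delta : \mf{a}^{\lceil t(p^e-1)\rceil})$ is exactly right and follows directly from the definitions, and the free-rank-equals-colength step (with the $p^{e\alpha(R)}$ factor absorbed) is the standard input from \cite{BST1,Tuck}. One minor caution on the second formula: invoking continuity of $t\mapsto s(R,\Delta,\mf{a}^t)$ to handle the bounded exponent shift risks circularity, since in \cite{BST2} continuity is established \emph{after} these length formulas are in place; the direct squeeze you also mention is the safer route and is what is actually done there.
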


As stated previously, the $F$-signature measures singularities, with smaller $F$-signature corresponding to worse singularities.

\begin{theorem}\label{thm:PositivityOfF-signature}
  Let $R$ be a local ring and $\mscr{D}$ a Cartier algebra on $R$. 
  \begin{enumerate}
    \item \emph{\cite[Corollary 16]{HunLeu}} The ring $R$ is regular if and only if $s(R) = 1$.
    \item \emph{\cite{AbeLeu,BST1}} The pair $(R,\mscr{D})$ is $F$-regular if and only if $s(R,\mscr{D}) > 0$.
  \end{enumerate}
\end{theorem}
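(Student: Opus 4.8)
The plan is to route both statements through the growth rate of the colengths $\ell_R(R/I_e^{\mscr{D}})$ of the degeneracy ideals, separating the formal ``upper bound / vanishing'' directions from the genuinely hard ``positive lower bound / rigidity'' directions. The first step is to install, for a general Cartier subalgebra $\mscr{D}$, the dictionary between the splitting numbers and the degeneracy ideals that underlies Proposition \ref{prop:F-signatureInTermsOfLength}: the maximal rank $a_e^{\mscr{D}}$ of a free $\mscr{D}$-summand of $F^e_*R$ and the colength $\ell_R(R/I_e^{\mscr{D}})$ agree after accounting for the factor $p^{e\alpha(R)}$ coming from the imperfection of the residue field. Dividing by $p^{e(d+\alpha(R))}$ then gives
\[
  s(R,\mscr{D}) = \lim_{e\to\infty}\frac{1}{p^{ed}}\,\ell_R\!\left(R/I_e^{\mscr{D}}\right),
\]
so both statements become assertions about how fast $\ell_R(R/I_e^{\mscr{D}})$ grows in $e$.

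For part (2), the implication $s(R,\mscr{D})>0 \Rightarrow (R,\mscr{D})$ is $F$-regular is the formal direction. Since $P_{\mscr{D}} = \bigcap_e I_e^{\mscr{D}}$ is the largest $\mscr{D}$-compatible ideal (Proposition \ref{prop:SplittingPrime}, Remark \ref{rmk:SplittingPrimeLargestCompatibleIdeal}), I would argue that $V(I_e^{\mscr{D}})$ concentrates on $V(P_{\mscr{D}})$ tightly enough that $\ell_R(R/I_e^{\mscr{D}})$ grows no faster than $p^{e\dim(R/P_{\mscr{D}})}$. Hence $s(R,\mscr{D})>0$ forces $\dim(R/P_{\mscr{D}})=\dim R$; as $P_{\mscr{D}}$ is prime this makes it a minimal prime, so $P_{\mscr{D}}=0$ and $(R,\mscr{D})$ is $F$-regular by Proposition \ref{prop:SplittingPrime}. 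The converse is the hard part: I must produce a strictly positive lower bound for $\liminf_e a_e^{\mscr{D}}/p^{e(d+\alpha(R))}$. Strong $F$-regularity supplies only a single $\phi_0\in\mscr{D}_{e_0}$ with $\phi_0(F^{e_0}_*c)=1$ for one chosen $c$; the idea is to take $c$ to be a $\mscr{D}$-test element and then bootstrap this lone splitting through the composition law of the Cartier algebra to split off a fixed nonzero fraction of $F^e_*R$ uniformly in $e$, which with the existence of the limit yields $s(R,\mscr{D})>0$.

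For part (1), the implication $R$ regular $\Rightarrow s(R)=1$ is immediate from Kunz's theorem: regularity makes $F^e_*R$ free, necessarily of rank $p^{e(d+\alpha(R))}$, so every summand is free and the limit is $1$. The converse $s(R)=1\Rightarrow R$ regular is a rigidity statement and is the obstacle on this side. Writing $F^e_*R\cong R^{\oplus a_e}\oplus M_e$ with $M_e$ free-summand-free, the hypothesis says the free part is asymptotically everything; one must upgrade this asymptotic fullness to the exact conclusion that $M_e$ vanishes for large $e$, i.e. that $F^e_*R$ is free, whereupon Kunz's theorem again gives regularity.

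In both parts the real difficulty is identical in spirit: the vanishing and upper-bound directions fall out formally from the splitting-prime description and the length formula, but converting the pointwise splitting condition of $F$-regularity into a uniform positive lower bound (the Aberbach--Leuschke positivity) and converting the asymptotic equality $s(R)=1$ into exact freeness (the Huneke--Leuschke rigidity) require the nontrivial uniform estimates, and I expect these to be where essentially all the work lies.
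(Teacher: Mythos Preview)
The paper does not prove this theorem at all: it is quoted as background with attributions to \cite{HunLeu} for part (1) and to \cite{AbeLeu,BST1} for part (2), and the text moves on immediately to the $F$-splitting ratio. So there is no ``paper's own proof'' to compare your sketch against.

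That said, your outline is a reasonable summary of how the cited results actually go, with one caveat. In part (2), your ``formal'' direction argues that $\ell_R(R/I_e^{\mscr{D}})$ grows at most like $p^{e\dim(R/P_{\mscr D})}$; this is true but not as automatic as you suggest. The ideals $I_e^{\mscr D}$ are not literally Frobenius powers of a fixed ideal, so the growth bound needs the compatibility $\phi(F_*^{e'}I_e^{\mscr D})\subseteq I_{e-e'}^{\mscr D}$ and a comparison with an $\mf m$-primary reduction, as in \cite{BST1}. Also, your phrasing ``$P_{\mscr D}$ a minimal prime, so $P_{\mscr D}=0$'' tacitly assumes $R$ is a domain; in the general $F$-pure setting one should say $P_{\mscr D}$ is contained in every minimal prime and use reducedness. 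For the hard converse (positivity), your plan to use a $\mscr D$-test element and amplify a single splitting through the algebra structure is exactly the Aberbach--Leuschke idea as extended in \cite{BST1}. For part (1), the rigidity step ``$s(R)=1$ forces $M_e=0$ eventually'' is the substantive content of \cite[Corollary 16]{HunLeu}; your sketch correctly locates the difficulty but does not indicate how to cross it (in the original, one compares $a_e$ with the minimal number of generators $\mu(F_*^eR)$ via Hilbert--Kunz multiplicity).
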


In the case $(R,\mscr{D})$ is $F$-pure but not $F$-regular (so $s(R,\mscr{D}) = 0)$, there is still a numerical invariant to consider.

\begin{theorem}[{\cite[Theorem 4.2]{BST1}}]\label{thm:F-splittingRatio}
  Let $\mscr{D}$ be a Cartier subalgebra on a local ring $R$. Suppose $(R,\mscr{D})$ is $F$-pure with $F$-splitting prime $P_\mscr{D}$. Then the limit
  \[
    r_F(R,\mscr{D}) = \lim_{e \in \Gamma_{\mscr{D}} \to \infty} \frac{1}{p^{e\dim{R/P_\mscr{D}}}} \ell_R\left( R/I_e^\mscr{D} \right)
  \]
  exists and is called the \emph{$F$-splitting ratio}. The integer $\dim{R / P_{\mscr{D}}}$ is called the \emph{splitting dimension} of $(R,\mscr{D}$). Furthermore, if $\mscr{D}_{P_\mscr{D}}$ denotes the induced Cartier subalgebra on $R/P_{\mscr{D}}$, then
  \[
    r_F(R,\mscr{D}) = s(R/P_{\mscr{D}}, \mscr{D}_{P_\mscr{D}}) 
  \]
  so that, in particular, $0 < r_F(R,\mscr{D}) \leq 1$.
\end{theorem}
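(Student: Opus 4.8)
The plan is to reduce the $F$-splitting ratio of $(R,\mscr{D})$ to the ordinary $F$-signature of the induced pair $(R/P_\mscr{D},\mscr{D}_{P_\mscr{D}})$, which I will show is $F$-regular. Write $P = P_{\mscr{D}}$. By Remark \ref{rmk:SplittingPrimeLargestCompatibleIdeal} the splitting prime $P$ is $\mscr{D}$-compatible (and prime, since $(R,\mscr{D})$ is $F$-pure, by Proposition \ref{prop:SplittingPrime}), so the induced Cartier subalgebra $\mscr{D}_P$ of Definition \ref{def:CompatibleIdealAndInducedSubalgebra} is defined on the $(\dim R/P)$-dimensional local domain $R/P$. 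The whole proof then hinges on comparing the degeneracy ideals of $\mscr{D}$ on $R$ with those of $\mscr{D}_P$ on $R/P$.

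First I would establish that the degeneracy ideals descend, namely that $I_e^{\mscr{D}_P} = I_e^{\mscr{D}}/P$ for every $e$. Each $\phi \in \mscr{D}_e$ induces $\phi_P \colon F^e_*(R/P) \to R/P$ with $\phi_P(F^e_*\bar r) = \overline{\phi(F^e_* r)}$; since $P$ is $\mscr{D}$-compatible we have $\phi(F^e_* P) \subseteq P \subseteq \mf{m}$, so the condition ``$\phi(F^e_* r) \in \mf{m}$ for all $\phi \in \mscr{D}_e$'' depends only on the class $\bar r$ and is equivalent to ``$\phi_P(F^e_*\bar r) \in \mf{m}/P$ for all $\phi_P \in (\mscr{D}_P)_e$.'' As $P \subseteq I_e^{\mscr{D}}$ for every $e$ (for $e \in \Gamma_{\mscr{D}}$ this is the definition of $P$, and for $e \notin \Gamma_{\mscr{D}}$ one has $I_e^{\mscr{D}} = R$), the displayed identity follows. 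Consequently $R/I_e^{\mscr{D}} \cong (R/P)/I_e^{\mscr{D}_P}$ as $R$-modules, and because this module is killed by $P$ its $R$-length and $R/P$-length agree, giving
\[
  \frac{1}{p^{e\dim R/P}}\,\ell_R\!\left(R/I_e^{\mscr{D}}\right) = \frac{1}{p^{e\dim R/P}}\,\ell_{R/P}\!\left((R/P)/I_e^{\mscr{D}_P}\right).
\]

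Next I would show $(R/P,\mscr{D}_P)$ is $F$-regular by computing its splitting prime. Using the descent identity and $P = \bigcap_{e \in \Gamma_{\mscr{D}}} I_e^{\mscr{D}}$, one gets $P_{\mscr{D}_P} = \bigl(\bigcap_e I_e^{\mscr{D}}\bigr)/P = P/P = 0$, so $(R/P,\mscr{D}_P)$ is $F$-regular by Proposition \ref{prop:SplittingPrime}. The right-hand side of the displayed equation is then, by the length formulation of the $F$-signature (Proposition \ref{prop:F-signatureInTermsOfLength} and Definition \ref{def:F-signature}) applied to $\mscr{D}_P$ on $R/P$, exactly the sequence whose limit computes $s(R/P,\mscr{D}_P)$. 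Hence the limit defining $r_F(R,\mscr{D})$ exists and equals $s(R/P,\mscr{D}_P)$; positivity $r_F > 0$ follows from $F$-regularity together with Theorem \ref{thm:PositivityOfF-signature}(2), and the bound $r_F \le 1$ from the universal upper bound on $F$-signature.

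The formal descent in the first two steps is routine; the genuine content is the existence of the $F$-signature limit for a Cartier subalgebra, which I invoke for the $F$-regular pair $(R/P,\mscr{D}_P)$ and which rests on the uniform-convergence/subadditivity estimates underlying Definition \ref{def:F-signature}. The point that makes the statement nontrivial—and the reason one must pass to the quotient—is the normalization: on $R$ the pair is only $F$-pure, so $\ell_R(R/I_e^{\mscr{D}})$ grows like $p^{e\dim R/P}$ rather than $p^{e\dim R}$ and $s(R,\mscr{D}) = 0$, whereas dividing by the splitting-dimension power $p^{e\dim R/P}$ recovers the leading coefficient. The main obstacle I anticipate, beyond citing $F$-signature existence, is the bookkeeping needed to match the index semigroups: I must confirm that the limit along $\Gamma_{\mscr{D}}$ appearing in $r_F$ agrees with the limit along $\Gamma_{\mscr{D}_P}$ that computes $s(R/P,\mscr{D}_P)$, i.e.\ that the indices $e \in \Gamma_{\mscr{D}}$ with $(\mscr{D}_P)_e = 0$ (equivalently $I_e^{\mscr{D}} = R$) do not obstruct convergence. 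This compatibility of semigroups and the attendant convergence along $\Gamma_{\mscr{D}}$ is the delicate step that the general theory must supply.
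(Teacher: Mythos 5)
The paper never proves this statement: it is quoted as background from \cite[Theorem 4.2]{BST1}, so the only thing to compare against is the original argument of Blickle--Schwede--Tucker, and your strategy is precisely that argument. Your descent steps are correct: $I_e^{\mscr{D}_{P}} = I_e^{\mscr{D}}/P$ for the splitting prime $P = P_{\mscr{D}}$, the resulting equality of lengths, and the computation $P_{\mscr{D}_{P}} = \bigl(\bigcap_e I_e^{\mscr{D}}\bigr)/P = 0$ showing $(R/P, \mscr{D}_P)$ is $F$-regular, are all fine, as is invoking the (cited) existence and positivity of $F$-signature for Cartier subalgebras.

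The genuine gap is exactly the step you flag at the end and defer to ``the general theory,'' and it cannot be supplied in the form you hope: the claim that indices $e \in \Gamma_{\mscr{D}}$ with $I_e^{\mscr{D}} = R$ ``do not obstruct convergence'' is false. Such indices contribute the value $0$ to the sequence, while by your own reduction the indices admitting $\mscr{D}$-splittings contribute terms converging to $s(R/P,\mscr{D}_P) > 0$; so if there are infinitely many of each, the limit over $\Gamma_{\mscr{D}} = \{e : \mscr{D}_e \neq 0\}$ simply does not exist. This really happens. Take $R = \bb{F}_p\llbracket x \rrbracket$ with maximal ideal $\mf{m} = (x)$, and set $\mscr{D}_e := \Hom_R(F^e_*R,R)\cdot F^e_*(x^{a_e})$ where $a_0 = 0$, $a_e = p^e - 1$ for $e$ even, and $a_e = p^e$ for $e$ odd. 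The inequality $a_e + p^e a_f \geq a_{e+f}$ holds in every parity case, so $\mscr{D}$ is closed under composition and is a Cartier subalgebra with $\Gamma_{\mscr{D}} = \Z_{\geq 0}$. It is $F$-pure (even degrees contain surjections), and $I_e^{\mscr{D}} = \bigl((x^{p^e}) : x^{a_e}\bigr)$ equals $(x)$ for $e$ even but equals $R$ for $e$ odd; hence $P_{\mscr{D}} = (x)$, the splitting dimension is $0$, and $\ell_R(R/I_e^{\mscr{D}})$ alternates $1,0,1,0,\dots$, so the displayed limit, read literally over $\Gamma_{\mscr{D}}$, diverges even though $s(R/P_{\mscr{D}},\mscr{D}_{P_{\mscr{D}}}) = 1$. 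The correct reading (and the one the paper actually uses, e.g.\ in Lemma \ref{lem:LimitLemma}, where limits run over $e$ divisible by a fixed $e_0$) takes the limit along the sub-semigroup of those $e$ for which $\mscr{D}_e$ contains a surjective map, equivalently $a_e^{\mscr{D}} \neq 0$, equivalently $I_e^{\mscr{D}} \neq R$; along that semigroup your descent identity makes the terms literally equal to those computing $s(R/P,\mscr{D}_P)$, and your proof closes. (Note also that only one direction of your parenthetical ``$(\mscr{D}_P)_e = 0$ iff $I_e^{\mscr{D}} = R$'' is immediate, namely the forward one, though this is secondary.) So the crux you correctly isolated is resolved not by showing the zero terms are harmless --- they are not --- but by indexing the limit over the splitting semigroup.
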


Our main theorem concerns the behavior of $s(R,\Delta + tD)$ as a function of the real parameter $t$. Blickle, Schwede, and Tucker established some formal properties of the function $s(R,\Delta, \mf{a}^t)$ in \cite{BST2}.

\begin{proposition}[{\cite[Section 3]{BST2}}]\label{prop:ContinuityAndConvexityOfF-signature}
  Suppose $R$ is a normal local domain, $\Delta$ is an effective $\R$-divisor on $\Spec{R}$, and $D$ is a $\Q$-Cartier divisor. Then the function $s_R^{\Delta,D}(t) := s(R,\Delta + tD)$ is continuous and convex in $t$. In particular, $\frac{\partial_-}{\partial t}\big|_{t=1} s_R^{\Delta,D}(t)$ exists.
\end{proposition}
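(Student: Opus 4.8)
The plan is to rewrite the $F$-signature function as a limit of normalized colengths of colon ideals, to establish a discrete convexity statement at each finite level $e$, and then to pass to the limit. Continuity and the existence of one-sided derivatives will follow formally from convexity, since a convex function on an open interval is automatically continuous and has left and right derivatives at every interior point.

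\textbf{Reduction to the Cartier case.} First I would reduce to the situation in which $D = \div(f)$ is Cartier for some nonzerodivisor $f \in R$. In that case a direct comparison of defining data shows $\mscr{C}^{\Delta + tD} = \mscr{C}^{\Delta,(f)^t}$: premultiplication by $F_*^e f^{\lceil t(p^e-1)\rceil}$ identifies $\Hom_R(F_*^e R(\lceil (p^e-1)(\Delta+tD)\rceil),R)$ with $\mscr{C}^\Delta_e \cdot F_*^e (f)^{\lceil t(p^e-1)\rceil}$, so that $s(R,\Delta+tD) = s(R,\Delta,(f)^t)$ and Proposition \ref{prop:F-signatureInTermsOfLength} applies. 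When $D$ is merely $\Q$-Cartier of index $n$, the divisor $nD$ is Cartier, and I would reduce to the Cartier case by passing to the cyclic cover on which the pullback of $D$ becomes Cartier, transferring continuity and convexity through the finite-cover transformation rules developed in Section \ref{sec:VeroneseCovers} together with an affine reparametrization of $t$, which preserves both properties.

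\textbf{Convexity and passage to the limit.} Writing $c_m := \ell_R\big(R/(I_e^\Delta : f^m)\big)$, the heart of the argument is that $m \mapsto c_m$ is non-increasing and convex. Monotonicity is clear since $(I_e^\Delta : f^m)$ increases with $m$. For convexity, multiplication by $f$ induces an injection $(I_e^\Delta : f^{m+2})/(I_e^\Delta : f^{m+1}) \hookrightarrow (I_e^\Delta : f^{m+1})/(I_e^\Delta : f^{m})$ (a one-line check: the kernel is exactly the denominator), so the successive differences $\delta_m := c_m - c_{m+1} = \ell_R\big((I_e^\Delta : f^{m+1})/(I_e^\Delta : f^m)\big)$ are non-increasing, which is precisely convexity of $(c_m)$. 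Now let $\tilde g_e(t)$ be the piecewise-linear interpolation through the points $\big(m p^{-e},\, p^{-ed} c_m\big)$; each $\tilde g_e$ is then a convex function of $t$. Because the $\delta_m$ are non-increasing, $\delta_m \le c_0/m \le \ell_R(R/I_e^\Delta)/(t p^e)$, and since the limit defining $s(R,\Delta)$ exists we have $\ell_R(R/I_e^\Delta) = O(p^{ed})$, whence $\delta_m = O(p^{e(d-1)})$ for $t$ bounded away from $0$. Consequently the two grid values straddling $t$ agree after normalization in the limit, and Proposition \ref{prop:F-signatureInTermsOfLength} gives $\tilde g_e(t) \to s(R,\Delta+tD)$ pointwise. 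A pointwise limit of convex functions is convex, so $s(R,\Delta+tD)$ is convex on $(0,\infty)$, hence continuous there and equipped with a left derivative at the interior point $t = 1$.

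\textbf{Main obstacle.} I expect the principal difficulty to be the $\Q$-Cartier reduction: when $D$ is only $\Q$-Cartier, the relevant object is the colon by the divisorial fractional ideal $R(\lceil t(p^e-1)D\rceil)$, which need not be principal, so the clean ``multiplication by $f$'' injectivity argument does not apply directly. Passing to the cyclic cover where $D$ becomes Cartier circumvents this, but one must verify that the finite-cover transformation rules for $F$-signature interact correctly with the parameter $t$ and that the induced change of variables is affine. A secondary point requiring care is the uniformity of the estimate $\delta_m = O(p^{e(d-1)})$ near the ends of the parameter interval, though this is immaterial for the behavior at the interior point $t = 1$.
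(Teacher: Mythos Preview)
Your convexity argument in the Cartier case is correct and is essentially the argument underlying \cite[Theorem 3.9]{BST2}, so that part is fine (though the paper simply cites that result rather than reproving it).

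The real divergence is in the $\Q$-Cartier reduction, and here you have manufactured an obstacle where none exists. If $nD = \div(f)$, then as $\Q$-divisors one has $\Delta + tD = \Delta + \tfrac{t}{n}\div(f)$, and hence
\[
  s(R,\Delta + tD) \;=\; s\!\left(R,\Delta,\, f^{t/n}\right).
\]
This is exactly the identification you already made when $D$ itself is Cartier; the only change is the harmless affine reparametrization $t \mapsto t/n$. So the $\Q$-Cartier case reduces \emph{immediately} to the principal-ideal case of \cite{BST2}, with no cyclic cover needed. This is precisely what the paper does: the entire proof is the displayed equality above together with a citation.

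Your proposed detour through the cyclic cover is not wrong in spirit, but it is heavier machinery than the problem requires, and it introduces genuine verification burdens you would have to discharge: Proposition~\ref{prop:TransformationRuleSpecialCase} is stated under a strong $F$-regularity hypothesis on $(R,\Delta)$ that is not present in Proposition~\ref{prop:ContinuityAndConvexityOfF-signature}, so you would need to go back to Theorem~\ref{thm:TransformationRuleF-Sig} directly and check that $C(D)$ is a normal local domain in this generality. All of this is avoidable. The moral is that the ``colon by a non-principal divisorial ideal'' difficulty you anticipate never arises, because one can always absorb the index $n$ into the exponent and work with $f^{t/n}$ for the principal $f$ cutting out $nD$.
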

\begin{proof}
  Suppose $nD$ is Cartier with defining equation $f \in R$. Then we have
  \[
    s(R, \Delta + tD) = s\left(R, \Delta + \frac{t}{n}\dv{f}\right) = s(R, \Delta, f^{t/n}).
  \]
  From \cite[Theorem 3.9]{BST2}, we have that $s(R,\Delta,\mf{a}^t)$ is continuous. Furthermore, it is convex if $\mf{a}$ is principal. So $s(R,\Delta+tD)$ is continuous and convex, as it is equal to $h(t/n)$ for a continuous and convex function $h$. 
\end{proof}

\subsection{Hilbert-Kunz Multiplicity}\label{subsec:Hilbert_Kunz}

\begin{definition}\label{def:HilbertKunzMultiplicity}
  Let $I \subseteq R$ be an ideal of finite colength. We define the \emph{Hilbert-Kunz multiplicity} of $I$ along $R$ by
  \[
    \HK(I;R) := \lim_{e \to \infty} \frac{1}{p^{ed}} \ell_R \left( R/I^{[p^e]} \right).
  \] 
\end{definition}
Monsky proved that this limit exists in \cite{Mon}. The first proof of the existence of $F$-signature in general used the existence of Hilbert-Kunz multiplicity \cite{Tuck}, establishing the following precise relationship between the two invariants.

\begin{theorem}[{\cite{Tuck,BST1}}]\label{thm:F-signatureAndHK}
  Suppose $\mscr{D}$ is a Cartier algebra on a local ring $R$ of dimension $d$. Then
  \[
    s(R,\mscr{D}) = \lim_{e \in \Gamma_{\mscr{D}} \to \infty} \frac{1}{p^{ed}} \HK(I_e^\mscr{D}, R).
  \]
\end{theorem}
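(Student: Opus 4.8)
The plan is to show that the Hilbert--Kunz expression converges to the value produced by the colength description of the $F$-signature. Since Proposition \ref{prop:F-signatureInTermsOfLength} (applied to the relevant Cartier subalgebras, for which $I_e^\mscr{D} = (I_e^\Delta : \mf{a}^{\lceil t(p^e-1)\rceil})$) already identifies $s(R,\mscr{D}) = \lim_{e\in\Gamma_{\mscr{D}}}\frac{1}{p^{ed}}\ell_R(R/I_e^\mscr{D})$, it suffices to prove that $\lim_{e\in\Gamma_{\mscr{D}}}\frac{1}{p^{ed}}\HK(I_e^\mscr{D}, R)$ equals this limit. Abbreviating $I_e := I_e^\mscr{D}$ and unwinding Definition \ref{def:HilbertKunzMultiplicity}, the target quantity is the iterated limit $\lim_{e}\lim_{s}\frac{1}{p^{(e+s)d}}\ell_R(R/I_e^{[p^s]})$, so the whole problem reduces to comparing $I_e^{[p^s]}$ with $I_{e+s}$ as $s\to\infty$.

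First I would record the easy containment $I_e^{[p^s]}\subseteq I_{e+s}$ for $e,e+s\in\Gamma_{\mscr{D}}$, which is exactly the mechanism of Lemma \ref{lemma:ColonFrobeniusPowerContainment}: restricting a $p^{-(e+s)}$-linear map along $F^e_*R\hookrightarrow F^{e+s}_*R$ carries $F^{e+s}_*a^{p^s}$ to $F^e_*a$, so $a\in I_e$ forces $a^{p^s}\in I_{e+s}$ (the $\mf{a}^t$ contribution is absorbed by the colon, via the comparison of the exponents $\lceil t(p^e-1)\rceil$, which is routine bookkeeping). This containment gives a surjection $R/I_e^{[p^s]}\onto R/I_{e+s}$, hence $\ell_R(R/I_{e+s})\le\ell_R(R/I_e^{[p^s]})$. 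Dividing by $p^{(e+s)d}$ and letting $s\to\infty$ yields
\[
    s(R,\mscr{D}) = \lim_s\frac{\ell_R(R/I_{e+s})}{p^{(e+s)d}}\le\frac{1}{p^{ed}}\HK(I_e, R)
\]
for every $e\in\Gamma_{\mscr{D}}$, which is one of the two inequalities.

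For the reverse inequality I would analyze the gap through the exact sequence $0\to I_{e+s}/I_e^{[p^s]}\to R/I_e^{[p^s]}\to R/I_{e+s}\to 0$, which, after dividing by $p^{sd}$ and using $\lim_s \ell_R(R/I_{e+s})/p^{sd} = p^{ed}\,s(R,\mscr{D})$, gives
\[
    \frac{1}{p^{ed}}\HK(I_e, R) = s(R,\mscr{D}) + \frac{1}{p^{ed}}\lim_s\frac{\ell_R(I_{e+s}/I_e^{[p^s]})}{p^{sd}}.
\]
Everything now comes down to bounding the defect $\ell_R(I_{e+s}/I_e^{[p^s]})$. The key point is that $I_{e+s}$ and $I_e^{[p^s]}$ agree after inverting a single nonzero element: choosing $c\in R$ in no minimal prime with $R_c$ regular and $\Delta,\mf{a}$ trivial there (such $c$ exists because $R$ is a reduced $F$-finite domain, so the regular locus is open and dense), the explicit regular computation $I_m R_{\mf q} = (\mf q R_{\mf q})^{[p^m]}$ at primes $\mf q\not\ni c$ gives $I_{e+s}R_{\mf q} = (I_e R_{\mf q})^{[p^s]}$. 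Hence $I_{e+s}/I_e^{[p^s]}$ is supported on $V(c)$, of dimension at most $d-1$, and its length is governed by a Hilbert--Kunz multiplicity on the $(d-1)$-dimensional ring $R/cR$, yielding $\ell_R(I_{e+s}/I_e^{[p^s]}) = O(p^{s(d-1)})$; the defect term then vanishes and the reverse inequality follows.

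The main obstacle is making this last estimate uniform and quantitative: passing from ``$I_{e+s}$ and $I_e^{[p^s]}$ agree on $R_c$'' to an honest bound $\ell_R(I_{e+s}/I_e^{[p^s]}) = O(p^{s(d-1)})$ requires controlling the $c$-power torsion of $R/I_e^{[p^s]}$ as $s$ grows, i.e. a uniform $F$-compatibility statement rather than a pointwise one. This is precisely the technical heart of \cite{Tuck}, handled by fixing $c$ in the locus where $F^e_*R$ is locally free and comparing colengths against the Hilbert--Kunz multiplicity of the image of $I_e$ in $R/cR$, whose existence (Monsky) furnishes the required $O(p^{s(d-1)})$ control. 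A secondary bookkeeping point is that all limits run along the semigroup $\Gamma_{\mscr{D}}$, so one must verify that the containments above respect the grading of $\mscr{D}$.
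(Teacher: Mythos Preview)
First, note that the paper does not supply its own proof of this theorem: it is quoted from \cite{Tuck,BST1} and used as a black box. So there is no in-paper argument to compare against; the relevant benchmark is the argument in those references.

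Your outline contains a genuine gap, and it is not merely the technical one you flag at the end. The estimate you aim for, $\ell_R(I_{e+s}/I_e^{[p^s]}) = O(p^{s(d-1)})$ for each fixed $e$, is \emph{too strong to be true}. If it held, your displayed identity
\[
\frac{1}{p^{ed}}\HK(I_e,R) \;=\; s(R,\mscr{D}) \;+\; \frac{1}{p^{ed}}\lim_{s\to\infty}\frac{\ell_R(I_{e+s}/I_e^{[p^s]})}{p^{sd}}
\]
would force the defect to vanish and hence $\HK(I_e,R)/p^{ed} = s(R,\mscr{D})$ for \emph{every} $e\in\Gamma_{\mscr{D}}$, not just in the limit. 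That is false already for triples: for $(R,\Delta,\mf{a}^t)$ the quantity $\HK(I_e^{\Delta,\mf{a}^t},R)/p^{ed}$ is a step function of $t$ (since $I_e^{\Delta,\mf{a}^t}$ only jumps at rationals with denominator dividing $p^e-1$), whereas $s(R,\Delta,\mf{a}^t)$ is continuous in $t$ by \cite{BST2}. The heuristic ``supported on $V(c)$, hence length $O(p^{s(d-1)})$'' fails because a submodule of $R/I_e^{[p^s]}$ supported on a low-dimensional locus can still have length on the order of $p^{sd}$.

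What actually works in \cite{Tuck,BST1} is a different estimate: one produces a constant $C$, \emph{uniform in $e$}, with
\[
\bigl|\,\ell_R(R/I_e^{\mscr{D}}) - \HK(I_e^{\mscr{D}},R)\,\bigr| \;\le\; C\,p^{e(d-1)},
\]
so that dividing by $p^{ed}$ gives a difference bounded by $C/p^e$, which tends to $0$ as $e\to\infty$. This is obtained by iterating a one-step bound of the shape $|\ell(R/J^{[p]}) - p^d\ell(R/J)| \le C'\,\ell\bigl((R/c)/J(R/c)\bigr)$ for a fixed test element $c$, applied to $J = I_e^{\mscr{D}}$; the containment $I_e^{[p]}\subseteq I_{e+1}$ you noted is used only to control the auxiliary $(d-1)$-dimensional lengths uniformly. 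In short, the limit in $e$ is essential to the statement, and your proposed route, as written, would erase it.
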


\section{$F$-Adjunction}\label{sec:F-Adjunction}

\subsection{The Different}\label{subsec:Different}

Let $(R,\Delta)$ be a pair where $R$ is a normal, local domain with an effective $\Q$-divisor $\Delta$ on $\Spec{R}$. Let $D$ be a normal, prime $\Q$-Cartier divisor on $\Spec{R}$ such that $\Delta \wedge D = 0$. Furthermore, assume that the pair $(R,\Delta + D)$ is log $\Q$-Gorenstein with index prime to $p$. When $X$ and $D$ are regular, the classical adjunction formula states that $(K_X + D)|_D = K_D$. In general, we have $(K_X + \Delta + D)|_D = K_D + \Diff_D(\Delta)$, where the correction factor $\Diff_D(\Delta)$ is called the \emph{different} of $\Delta$ on $D$. In what follows, we construct this divisor via $p^{-e}$-linear maps.

As laid out explicitly in \cite{SchFad}, for any normal $X = \Spec{R}$, there is a bijection
\begin{equation}\label{eq:DivisorMapCorrespondence}
  \left\{ \substack{\text{Effective }\Q\text{-divisors }\Delta \text{ such} \\ \text{that }(p^e-1)(K_X+\Delta) \text{ is Cartier}} \right\} \leftrightarrow \left\{ \text{Non-zero elements of }\Hom_{R}(F_*^eR, R) \right\} \big/ \sim
\end{equation}
where two maps are $\sim$-equivalent if they differ by pre-multiplication by a unit of $R$. Indeed, Grothendieck duality for a finite map gives an isomorphism $\Hom_{R}(F_*^eR,R) \cong R((1-p^e)K_X)$ \cite{ResAndDua}. Given a map $\phi : F_*^eR \to R$, we associate an effective divisor $D_\phi \sim (1-p^e)K_X$. We normalize $D_\phi$ by defining $\Delta_\phi := \left( \frac{1}{p^e-1} \right)D_\phi$. See \cite[Section 4]{BliSch} for the details of this correspondence and its generalizations.

\begin{definition}[{\cite{SchFad}}]\label{def:FDifferent} 
  With notation as above, $(p^e-1)(K_X+\Delta+D)$ is Cartier for some $e$, so $\Delta+D$ corresponds to an $R$-linear map $\phi_{\Delta+D} : F^e_*R \to R$. Since $D$ is a $\mscr{C}^{\Delta+D}$-compatible subscheme, there is an induced map $\overline{\phi_{\Delta+D}} : F^e_*\O_D \to \O_D$ making the following diagram commute
  \begin{center}
    \begin{tikzcd}
      F^e_*R \arrow[r, "\phi_{\Delta+D}"] \arrow[d] & R \arrow[d] \\
      F^e_*\O_D \arrow[r, "\overline{\phi_{\Delta+D}}"] & \O_D.
    \end{tikzcd}
  \end{center}
  Since $D$ is normal, the correspondence \eqref{eq:DivisorMapCorrespondence} holds for $D$, and the divisor associated to $\overline{\phi_{\Delta+D}}$, denoted $\Diff_D(\Delta)$, is called the \emph{different of $\Delta$ on $D$}.
\end{definition}

The proposition below lists the relevant properties of the different. Both follow quickly from construction and are proven carefully in \cite{SchFad}.

\begin{proposition}\label{prop:PropertiesOfTheDifferent}
  Let $(X,\Delta + D)$ be a log $\Q$-Gorenstein pair with index prime to $p$ where $D$ is a normal, irreducible $\Q$-Cartier divisor. Suppose $(p^{e_0} - 1)(K_X + \Delta + D)$ is Cartier.
  \begin{enumerate}
    \item $(p^{e_0}-1)(K_D + \Diff_D(\Delta))$ is Cartier.
    \item The projection map
    \[
      \Hom_R(F_*^eR((p^e-1)(\Delta + D)), R) \to \Hom_{\O_D}(F_*^e\O_D((p^e-1)\Diff_D(\Delta)), \O_D)
    \]
    is surjective for all $e$ such that $e_0 | e$.
  \end{enumerate}
\end{proposition}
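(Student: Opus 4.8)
The plan is to derive both parts from the divisor–map correspondence \eqref{eq:DivisorMapCorrespondence}, using two facts: the relevant $\Hom$-modules are free of rank one once the governing divisor is Cartier, and passing to the induced map on $\O_D$ is compatible with the composition product in the Cartier algebra. Part (1) is essentially a restatement of how $\Diff_D(\Delta)$ is defined. By construction $\Diff_D(\Delta)$ is the $\Q$-divisor on the normal ring $\O_D$ associated via \eqref{eq:DivisorMapCorrespondence} to $\overline{\phi_{\Delta+D}} \in \Hom_{\O_D}(F_*^{e_0}\O_D,\O_D)$, and this map is nonzero: by Example \ref{ex:mainSetupExample} the pair $(R,\Delta+D)$ is $F$-pure at the generic point $\eta_D$ of $D$, and since forming the induced map commutes with localizing at $\mf p$, the map $\overline{\phi_{\Delta+D}}$ is surjective, hence nonzero, at $\eta_D$. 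The half of \eqref{eq:DivisorMapCorrespondence} for $\O_D$ that I need states that a nonzero element of $\Hom_{\O_D}(F_*^{e_0}\O_D,\O_D)$ corresponds to an effective $\Q$-divisor $\Gamma$ with $(p^{e_0}-1)(K_D+\Gamma)$ Cartier; applied to $\Gamma=\Diff_D(\Delta)$ this is exactly (1).

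For Part (2), I would first reinterpret the projection map. As $\mf p$ is $\mscr C^{\Delta+D}$-compatible, the map is $\phi \mapsto \overline{\phi}$ and lands in the induced Cartier subalgebra $\mscr C^{\Delta+D}_{\mf p}$, so (2) is equivalent to the assertion that, in degrees divisible by $e_0$, the induced subalgebra equals the divisorial Cartier algebra $\mscr C^{\Diff_D(\Delta)}$ of the different. Grothendieck duality identifies $\mscr C^{\Delta+D}_e\cong F_*^eR(-(p^e-1)(K_X+\Delta+D))$ and $\mscr C^{\Diff_D(\Delta)}_e\cong F_*^e\O_D(-(p^e-1)(K_D+\Diff_D(\Delta)))$; for $e_0\mid e$ the twisting divisors are Cartier (by hypothesis and by (1)), so both are free of rank one over $F_*^eR$ and $F_*^e\O_D$ respectively. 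In degree $e_0$ the generator $\phi_{\Delta+D}$ restricts to $\overline{\phi_{\Delta+D}}$, the generator attached to $\Diff_D(\Delta)$, and since $R\onto\O_D$ is surjective the restriction is already onto in degree $e_0$.

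The passage to an arbitrary $e=me_0$ rests on the structural claim that the Veronese is generated in degree $e_0$: for a normal local ring $A$ and a divisor $\Gamma$ with $(p^{e_0}-1)(K_A+\Gamma)$ Cartier, one has $\mscr C^\Gamma_{me_0}=F_*^{me_0}A\cdot\phi_\Gamma^{\cdot m}$, where $\phi_\Gamma$ generates $\mscr C^\Gamma_{e_0}$ and $\phi_\Gamma^{\cdot m}$ denotes its $m$-fold composition. Applying this to $A=R$ (with $\Gamma=\Delta+D$) and to $A=\O_D$ (with $\Gamma=\Diff_D(\Delta)$, legitimate by (1)), and using that restriction to $\O_D$ commutes with composition, one finds that $\phi_{\Delta+D}^{\cdot m}$ restricts to $\overline{\phi_{\Delta+D}}^{\cdot m}$, a generator of $\mscr C^{\Diff_D(\Delta)}_{me_0}$; combined with the surjectivity of $F_*^{me_0}R\onto F_*^{me_0}\O_D$ this forces the restriction map to be onto in every degree $me_0$.

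I expect the structural claim to be the main obstacle. I would prove it by transporting the composition product through Grothendieck duality: writing $\mathcal L_e=A(-(p^e-1)(K_A+\Gamma))$, the identity $\tfrac{p^{me_0}-1}{p^{e_0}-1}=1+p^{e_0}+\cdots+p^{(m-1)e_0}$ gives $\mathcal L_{me_0}\cong\mathcal L_{e_0}\otimes\mathcal L_{(m-1)e_0}^{\otimes p^{e_0}}$, and under these identifications $\phi_\Gamma^{\cdot m}$ corresponds, up to a unit, to the product $s\cdot s^{p^{e_0}}\cdots s^{p^{(m-1)e_0}}$ of a local generator $s$ of the Cartier divisor $-(p^{e_0}-1)(K_A+\Gamma)$ with its Frobenius twists, which is manifestly a generator of the free rank-one module $F_*^{me_0}\mathcal L_{me_0}$. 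The delicate point is the bookkeeping that matches the composition product with this twisted product of sections, and it is precisely here that one uses $p\nmid\mathrm{index}$ to keep every twisting divisor integral and Cartier.
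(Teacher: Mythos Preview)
Your argument is correct. The paper does not actually prove this proposition: the sentence preceding it says ``Both follow quickly from construction and are proven carefully in \cite{SchFad},'' and no further proof is given. What you have written is precisely the standard argument one finds in Schwede's $F$-adjunction paper: part (1) is immediate from how the correspondence \eqref{eq:DivisorMapCorrespondence} is set up (with the nonvanishing of $\overline{\phi_{\Delta+D}}$ coming from $F$-purity at $\eta_D$), and part (2) reduces to the structural fact that for $e_0\mid e$ the module $\Hom_A(F_*^eA((p^e-1)\Gamma),A)$ is free of rank one over $F_*^eA$ with generator the $e/e_0$-fold self-composite of $\phi_\Gamma$, together with the observation that restriction to $\O_D$ is a ring homomorphism for the composition product. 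Your verification of the generator claim via the telescoping identity $p^{me_0}-1=(p^{e_0}-1)(1+p^{e_0}+\cdots+p^{(m-1)e_0})$ and Grothendieck duality is exactly the right bookkeeping. So there is nothing to compare against: you have supplied the proof the paper elected to outsource.
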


\begin{remark}
  This construction of the different was introduced in \cite{SchFad}, where it was called the $F$-different, in order to formulate adjunction statements in the context of $F$-singularities. In birational geometry, there is a divisor called the different which appears in the theory of adjunction \cite[Chapter 4]{Kol13}. In our case (i.e. where $D$ is a divisor), the different and the $F$-different are equal \cite[Theorem 5.3]{Das}, so for the rest of the paper, we drop the ``$F$-" prefix. It should be noted however that for higher codimension centers of $F$-purity, the $F$-different and the different do not coincide in general \cite{DasSch}.
\end{remark}

The proof of our main theorem only uses the construction of Definition \ref{def:FDifferent}, but for completeness, we briefly review the geometric construction of the different closely following \cite[Chapter 4]{Kol13}. For the most efficient path to the proof of the main theorem, the reader can skip directly to Section \ref{subsec:F-Adjunction}.

Let $X$ be a normal variety, $D$ a reduced, effective Weil divisor and $\Delta$ an effective $\Q$-divisor that shares no components with $D$. Suppose $m(K_X + D + \Delta)$ is Cartier for some $m\geq 1$, and let $\nu : D' \to D$ be the normalization map\footnote{The different exists in greater generally. See \cite[Chapter 4]{Kol13} for a discussion under a set of minimal assumptions.}. Let $Z \subseteq D$ be the union of the non-regular locus of $D$ and $D \cap \Supp(\Delta)$. Note that $Z$ has codimension at least 1 in $D$ by assumption. Since $D \setminus Z$ is regular, the Poincar\'{e} residue map provides an isomorphism
\[
  \mcal{R}_D : \omega_X(D)|_{D\setminus Z} \map{\sim} \omega_{D}|_{D \setminus Z}.
\]
Moreover, $\nu|_{D' \setminus \nu^{-1}(Z)} : D' \setminus \nu^{-1}(Z) \to D \setminus Z$ is an isomorphism and $\Supp(\Delta) \cap D \subseteq Z$. So taking reflexive powers\footnote{For a sheaf $\mathcal{F}$, the sheaf $\mathcal{F}^{[m]}$ is the reflexification (or S2-ification) of the $\mathcal{F}^{\otimes m}$. That is $\mathcal{F}^{[m]} := (\mathcal{F}^{\otimes m})^{**}$. This is the only time we use the reflexive powers notation, so it should not be confused with the notation for the Frobenius power of an ideal.} of $\mcal{R}_D$ yields an isomorphism
\[
  \mcal{R}_D^m : \nu^*\left( \omega_X^{[m]}(mD + m\Delta) \right)\big|_{D' \setminus \nu^{-1}(Z)} \map{\sim} \omega^{[m]}_{D'}\big|_{D' \setminus \nu^{-1}(Z)}.
\]
Let $D'_{reg} \subseteq D'$ be the regular locus of $D'$. So $\nu^*\omega_{X}^{[m]}(mD+m\Delta)$ and $\omega_{D'}^{[m]}$ are invertible sheaves on $D'_{reg}$ which implies
\[
  \sHom_{\O_{D'}}\left( \nu^*\left( \omega_X^{[m]}(mD+m\Delta) \right), \omega_{D'}^{[m]} \right)
\]
is invertible on $D'_{reg}$. Furthermore, $\mcal{R}_D^m$ is a rational section of this invertible sheaf. As such, there is a unique divisor $\Delta_{D'_{reg}}$ on $D'_{reg}$ such that $\mcal{R}_D^m$ extends to an isomorphism
\begin{equation}\label{eq:DiffConstruction}
  \mcal{R}_{D'_{red}}^m : \nu^* \left( \omega_X^{[m]}(mD+m\Delta) \right) \map{\sim} \omega_{D'}^{[m]}(\Delta_{D'_{reg}})
\end{equation}
on all of $D'_{reg}$. As $D'$ is normal, $D' \setminus D'_{reg}$ has codimension at least 2 in $D'$, and $\Delta_{D'_{reg}}$ extends to a uniquely to a divisor $\Delta_{D'}$ on $D'$. The different of $\Delta$ on $D$ is defined to be $\Diff_{D'}(\Delta) = \frac{1}{m}\Delta_{D'}$. Note that $(K_X + D + \Delta)|_{D'} \sim_{\Q} K_{D'} + \Diff_{D'}(\Delta)$.

When it is defined, $\Diff_D(0)$ can be interpreted as a measure of how far $D$ is from being Cartier.

\begin{example}\label{ex:DifferentCartierCase}
  Let $R$ be a normal local domain with an effective Cartier integral divisor $D = \dv(f)$. The inverse linear map associated to the zero divisor is any generator $\Phi^e$ of $\Hom_R(F_*^eR,R)$. Now $D$ is compatible with the map $\Phi^e(F_*^e(f^{p^e-1}\cdot -))$, and the induced map $\phi^e : F_*^e R/f \to R/f$ generates $\Hom_{R/f}(F_*^e R/f, R/f)$ \cite[Example 4.3.2]{BST1}. So the divisor associated to $\phi^e$ is 0, but this is the different by definition.
\end{example}

Below, we see an example where $\Diff_D(0)$ is non-zero even when the divisor $D$ is regular. In particular, $\Diff_D(0)$ depends on the embedding of $D$ in $\Spec{R}$ rather than the geometry of $D$ itself.

\begin{example}\label{ex:AnSingularities}
  ($A_n$ singularities) Let $R = S/f = \bb{F}_p\llbracket x,y,z\rrbracket / (xy-z^{n+1})$ with $n\geq 1$ and $p \geq 3$, and let $X= \Spec{R}$. Let $D = V(x,z)$, so $D$ is $\Q$-Cartier with index $n+1$. Indeed, $(n+1)D = \dv{x}$. We compute the different $\Diff_D(0)$ in two ways. First, we use the geometric construction. This computation also appears in \cite[Example 4.3]{Kol13}. The canonical module $\omega_X$ is generated by $\frac{1}{y} \,dy \wedge dz$. So $\omega_X^{[n+1]}((n+1)D)$ is locally free with generator
  \[
    \frac{(dy \wedge dz)^{\otimes (n+1)}}{xy^{n+1}} = \frac{(dy \wedge dz)^{\otimes (n+1)}}{z^{n+1}y^{n}} = \frac{(dz)^{\otimes n+1}}{z^{n+1}} \wedge \frac{dy^{\otimes {n+1}}}{y^{n}}.
  \]
  The residue of the generator is $dy^{\otimes n+1} / y^n$. So the different is $\Diff_D(0) = (1 - 1/(n+1)) [0]$ where $[0]$ is the class of the point $0 \in D \cong \A^1$ with coordinate $y$. Note that $\Diff_D(0)$ is non-zero despite the fact that $D$ is regular.
  
  Now, let us compute the different via Frobenius. With this approach, we require that the index of $D$ be prime to $p$, so there is some $e$ such that $n+1 | p^e-1$. Let $\Phi^e: F_*^e S \to S$ be the map defined by 
  \[
    \Phi^e(x^{i}y^jz^k) = \begin{cases}
      1 & i=j=k=p^e-1 \\
      0 & \text{otherwise}
    \end{cases}
  \]
  for $0 \leq i,j,k \leq p^e-1$. This map generates $\Hom_S(F_*^eS,S)$ as an $F_*^eS$-module. Note that $\Phi^e(f^{p^e-1}\cdot -)$ induces a map $\phi^e : F_*^e R \to R$ that generates $\Hom_R(F_*^eR, R)$ (see \cite[Example 4.3.2]{BST1}). The map associated to the divisor $D$ is $\phi^e(x^{(p^e-1)/(n+1)}\cdot -)$. Since $D$ is $\phi^e(x^{(p^e-1)/(n+1)} \cdot -)$-compatible, there is an induced map $\phi^e_{\Diff_D(0)} : F_*^e\O_D \to \O_D$. We summarize the setup in the following diagram.
  \begin{center}
    \begin{tikzcd}
      F_*^eS \arrow[rr] \arrow[d, "\Phi^e(F_*^e(f^{p^e-1}x^{\frac{p^e-1}{n+1}}\cdot -))"'] & & F_*^eR \arrow[rr] \arrow[d, "\phi^e(F_*^e(x^{\frac{p^e-1}{n+1}}\cdot -))"] & & F_*^e\bb{F}_p\llbracket y \rrbracket \arrow[d, "\phi_{\Diff_D(0)}"] \\
      S \arrow[rr] & & R \arrow[rr] & & \bb{F}_p\llbracket y \rrbracket
    \end{tikzcd}
  \end{center}
  Since $\O_D = \bb{F}_p\llbracket y\rrbracket$, the map $\phi_{\Diff_D(0)}$ is determined by its values on $F_*^ey^i$ for $0 \leq i \leq p^e-1$. Thus, $\phi_{\Diff_D(0)}(F_*^ey^i) = \Phi^e(F_*^e( f^{p^e-1}x^{\frac{p^e-1}{n+1}}y^i)) \mod{(x,z)}$. So we compute
  \begin{align*}
    \Phi^e\left(F_*^e( f^{p^e-1}x^{\frac{p^e-1}{n+1}}y^i) \right) &= \Phi^e\left(  F_*^e \left( \sum_{k=0}^{p^e-1} {p^e-1 \choose k} x^{k + \frac{p^e-1}{n+1}}y^{k+i}z^{(n+1)(p^e-1-k)} \right)\right) \\
    &\equiv \Phi^e\left( F_*^e {p^e-1 \choose \frac{n(p^e-1)}{n+1}} x^{p^e-1}y^{\frac{n(p^e-1)}{n+1} + i} z^{p^e-1} \right) \mod{(x,z)} \\
    &\equiv \begin{cases}
      {p^e-1 \choose n(p^e-1)/(n+1)}  & i = \frac{p^e-1}{n+1} \\
      0  & \text{otherwise}
    \end{cases} \mod{(x,z)}
  \end{align*}
  Since this binomial coefficient is a unit in $\bb{F}_p\llbracket y \rrbracket$, we see that the divisor associated to this map is
  \[
    \Diff_D(0) = \frac{1}{p^e-1} \left( \frac{n(p^e-1)}{n+1}\right)\dv{y} = \left(1-\frac{1}{n+1}\right)[0].
  \]
\end{example}

\subsection{$F$-Adjunction for Cartier Subalgebras}\label{subsec:F-Adjunction}

In this subsection, we specify the relationship between $\mscr{C}^{\Delta + D}$ on $R$ and $\mscr{C}^{\Diff_D(\Delta)}$ on $\O_D$.

\begin{proposition}\label{prop:CartierSubalgebraComparison}
  Let $(R,\Delta + D)$ be a log $\Q$-Gorenstein pair with index prime to $p$. Assume $R$ is a normal local ring, $\Delta$ is an effective $\Q$-divisor, and $D$ is an irreducible, reduced, normal, $\Q$-Cartier divisor with $D \wedge \Delta = 0$.
  \begin{enumerate}
    \item The $e$th graded piece of $\mscr{C}^{\Diff_D(\Delta)}$ is equal to the $e$th graded piece of the Cartier subalgebra on $\O_D$ induced by $\mscr{C}^{\Delta+D}$ (as in Definition \ref{def:CompatibleIdealAndInducedSubalgebra}) for sufficiently divisible $e$.
    \item For sufficiently divisible $e$, the ideal $I_e^{\Diff_D(\Delta)}$ is the extension of $I_e^{\Delta+D} $ in $\O_D$. In particular, $\ell_R(R/I_e^{\Delta+D}) = \ell_{\O_D}(\O_D / I_e^{\Diff_D(\Delta)})$ for sufficiently divisible $e$.
  \end{enumerate} 
\end{proposition}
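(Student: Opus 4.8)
The plan is to deduce both parts from the surjectivity statement of Proposition \ref{prop:PropertiesOfTheDifferent}(2), after checking that for sufficiently divisible $e$ the two graded pieces in question are exactly the source and target of that projection. Write $\mf{p} \subseteq R$ for the prime defining $D$, so that $\O_D = R/\mf{p}$. By Example \ref{ex:mainSetupExample}, $\mf{p}$ is $\mscr{C}^{\Delta+D}$-compatible, so the induced Cartier subalgebra $(\mscr{C}^{\Delta+D})_{\mf{p}}$ of Definition \ref{def:CompatibleIdealAndInducedSubalgebra} is defined, with $e$th graded piece $\{\phi_{\mf{p}} : \phi \in \mscr{C}^{\Delta+D}_e\}$. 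Choose $e_0$ with $(p^{e_0}-1)(K_X+\Delta+D)$ Cartier. For $e_0 \mid e$, both $(p^e-1)(K_X+\Delta+D)$ and, by Proposition \ref{prop:PropertiesOfTheDifferent}(1), $(p^e-1)(K_D+\Diff_D(\Delta))$ are Cartier; since $K_X$ and $K_D$ are integral, $(p^e-1)(\Delta+D)$ and $(p^e-1)\Diff_D(\Delta)$ are integral, so the round-ups in $\mscr{C}^{\Delta+D}_e$ and $\mscr{C}^{\Diff_D(\Delta)}_e$ are trivial and
\[
  \mscr{C}^{\Delta+D}_e = \Hom_R(F_*^eR((p^e-1)(\Delta+D)),R), \qquad \mscr{C}^{\Diff_D(\Delta)}_e = \Hom_{\O_D}(F_*^e\O_D((p^e-1)\Diff_D(\Delta)),\O_D).
\]
These are precisely the source and target of the projection in Proposition \ref{prop:PropertiesOfTheDifferent}(2); all assertions below are for such $e$.

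For part (1), I would first observe that the projection map of Proposition \ref{prop:PropertiesOfTheDifferent}(2) sends $\phi$ to its induced map $\phi_{\mf{p}}$: both are given by reduction modulo $\mf{p}$, and the construction of $\Diff_D(\Delta)$ in Definition \ref{def:FDifferent} is exactly what guarantees these reductions land in $\mscr{C}^{\Diff_D(\Delta)}_e$. Thus the image of the projection is the $e$th graded piece of $(\mscr{C}^{\Delta+D})_{\mf{p}}$, while its target is $\mscr{C}^{\Diff_D(\Delta)}_e$; surjectivity (Proposition \ref{prop:PropertiesOfTheDifferent}(2)) forces these to agree. Concretely, since $(p^e-1)(K_X+\Delta+D)$ is Cartier and $R$ is local, $\mscr{C}^{\Delta+D}_e$ is the free rank-one $F_*^eR$-module $F_*^eR\cdot\phi_{\Delta+D}$, and likewise $\mscr{C}^{\Diff_D(\Delta)}_e = F_*^e\O_D\cdot\overline{\phi_{\Delta+D}}$; the inclusion $\subseteq$ follows by premultiplying $\phi_{\Delta+D}$ by $c \in R$ and reducing, and $\supseteq$ by lifting $\bar c \in \O_D$ to $R$.

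For part (2), recall from Example \ref{ex:mainSetupExample} and Proposition \ref{prop:SplittingPrime} that $\mf{p} \subseteq P_{\mscr{C}^{\Delta+D}} = \bigcap_e I_e^{\Delta+D}$, so $\mf{p} \subseteq I_e^{\Delta+D}$ and the extension of $I_e^{\Delta+D}$ to $\O_D$ coincides with its image. Given $r \in R$ with image $\bar r \in \O_D$, the commuting square of Definition \ref{def:CompatibleIdealAndInducedSubalgebra} gives $\phi_{\mf{p}}(F_*^e\bar r) = \overline{\phi(F_*^er)}$ for each $\phi \in \mscr{C}^{\Delta+D}_e$. As $\mf{p} \subseteq \mf{m}$, the maximal ideal of $\O_D$ is $\mf{m}/\mf{p}$, so $\overline{\phi(F_*^er)} \in \mf{m}/\mf{p}$ iff $\phi(F_*^er) \in \mf{m}$. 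Letting $\phi_{\mf{p}}$ range over all of $\mscr{C}^{\Diff_D(\Delta)}_e$ by part (1), this shows $\bar r \in I_e^{\Diff_D(\Delta)}$ iff $r \in I_e^{\Delta+D}$; hence $I_e^{\Diff_D(\Delta)}$ is the image, equivalently the extension, of $I_e^{\Delta+D}$. The resulting isomorphism $R/I_e^{\Delta+D} \cong \O_D/I_e^{\Diff_D(\Delta)}$ of $R$-modules has all composition factors equal to $R/\mf{m} = \O_D/(\mf{m}/\mf{p})$, giving the stated length equality.

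The main obstacle is bookkeeping rather than conceptual content: one must confirm that the abstract induced map $\phi \mapsto \phi_{\mf{p}}$ of Definition \ref{def:CompatibleIdealAndInducedSubalgebra} coincides with the restriction map of Proposition \ref{prop:PropertiesOfTheDifferent}(2), and that for sufficiently divisible $e$ the round-up corrections all vanish so the graded pieces are the honest Hom-modules appearing in the different's construction. Granting that identification, part (1) is immediate from surjectivity, and part (2) is a formal consequence of part (1) together with the compatibility $\mf{p} \subseteq I_e^{\Delta+D}$.
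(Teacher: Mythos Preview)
Your argument is correct and follows the same approach as the paper: identify the induced Cartier subalgebra's $e$th graded piece with the image of the projection map in Proposition~\ref{prop:PropertiesOfTheDifferent}(2), then invoke surjectivity for part (1), and deduce part (2) formally from (1) together with $\mf{p} \subseteq I_e^{\Delta+D}$. You have simply written out the bookkeeping (integrality of $(p^e-1)(\Delta+D)$ so the round-ups vanish, and the explicit biconditional $\bar r \in I_e^{\Diff_D(\Delta)} \Leftrightarrow r \in I_e^{\Delta+D}$) that the paper leaves implicit.
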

\begin{proof}
  Assume $(p^{e_0}-1)(K_X + D + \Delta)$ is Cartier. The $e$th graded piece of the Cartier subalgebra induced by $\mscr{C}^{\Delta+D}$ is the collection of maps $\overline{\psi}: F_*^e\O_D \to \O_D$ which fit into a diagram 
  \begin{center}
    \begin{tikzcd}
      F^e_*R \arrow[d] \arrow[r, "\psi"] & R \arrow[d] \\
      F^e_*\O_D \arrow[r, "\overline{\psi}"] & \O_D
    \end{tikzcd}
  \end{center}
  for some $\psi : F^e_*R \to R$, i.e. it is the image of the projection map
  \[
    \Hom_R(F^e_*R((p^e-1)(\Delta + D)), R) \to \Hom_{\O_D}(F_*^e\O_D((p^e-1)\Diff_D(\Delta)), \O_D).
  \]
  By Proposition \ref{prop:PropertiesOfTheDifferent}, this map is surjective for all $e$ such that $e_0 | e$. So the first statement follows.
  
  Part (2) follows from (1) given the definition of the ideals $I_e^{\Diff_D(\Delta)}$ and $I_e^{\Delta+D}$. The length statement uses the fact since $D$ is $\mscr{C}^{\Delta+D}$-compatible, so its ideal is contained in each $I_e^{\Delta+D}$. 
\end{proof}

The following consequence of Proposition \ref{prop:CartierSubalgebraComparison} says that we may compute the $F$-signature of the pair $(\O_D,\Diff_D(\Delta))$ from the pair $(R,\Delta+D)$. Our proof of Theorem \ref{thm:MainTheorem} only requires the case where $D$ is Cartier, but we include the general case since its proof is no harder.

\begin{lemma}\label{lem:LimitLemma}
  Let $(R,\Delta + D)$ be a log $\Q$-Gorenstein pair with index prime to $p$. Assume $R$ is a normal local ring of dimension $d$, $\Delta$ is an effective $\Q$-divisor, and $D$ is a normal $\Q$-Cartier divisor of index $m$ such that $D \wedge \Delta= 0$ and $(m,p)=1$. If $mD = \dv(f)$, and $m | (p^{e_0}-1)$, then
  \[
    s(\O_D, \Diff_D(\Delta)) = \lim_{\substack{e \to \infty \\ e_0|e}} \frac{1}{p^{e(d-1)}}\ell_R\left( \frac{R}{(I_e^\Delta : f^{(p^e-1)/m})}  \right).
  \]
  If $D$ is the minimal center of $F$-purity of the pair $(R,\Delta+D)$ then $s(\O_D, \Diff_D(\Delta)) = r(R,\Delta+D)$. Otherwise $s(\O_D,\Diff_D(\Delta)) = 0$.
\end{lemma}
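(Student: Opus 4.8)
The plan is to reduce the general $\Q$-Cartier case to the Cartier case established in Proposition~\ref{prop:CartierSubalgebraComparison}, and then to connect the $F$-signature of the induced data on $\O_D$ with the colon-ideal lengths computed on $R$. First I would invoke Proposition~\ref{prop:CartierSubalgebraComparison}(1): for sufficiently divisible $e$, the $e$th graded piece of $\mscr{C}^{\Diff_D(\Delta)}$ agrees with the $e$th graded piece of the Cartier subalgebra on $\O_D$ induced by $\mscr{C}^{\Delta+D}$. This lets me rewrite $s(\O_D,\Diff_D(\Delta))$ in terms of data pulled back from $R$. Since $D$ is $\mscr{C}^{\Delta+D}$-compatible (Example~\ref{ex:mainSetupExample}), its defining ideal sits inside each $I_e^{\Delta+D}$, and Proposition~\ref{prop:CartierSubalgebraComparison}(2) gives the crucial length identity $\ell_{\O_D}(\O_D/I_e^{\Diff_D(\Delta)}) = \ell_R(R/I_e^{\Delta+D})$ for sufficiently divisible $e$. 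Combined with the length formula for $F$-signature of a pair on $\O_D$ (from Proposition~\ref{prop:F-signatureInTermsOfLength}, applied on the $(d-1)$-dimensional ring $\O_D$), this yields
\[
  s(\O_D,\Diff_D(\Delta)) = \lim_{\substack{e\to\infty\\ e_0|e}} \frac{1}{p^{e(d-1)}}\,\ell_R\!\left(R/I_e^{\Delta+D}\right).
\]

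The remaining task is to identify $I_e^{\Delta+D}$ with the colon ideal $(I_e^\Delta : f^{(p^e-1)/m})$. The point is that adding the divisor $D$ of index $m$ with $mD=\dv(f)$ amounts, at the level of $p^{-e}$-linear maps, to pre-composing maps in $\Hom_R(F_*^eR(\lceil(p^e-1)\Delta\rceil),R)$ with multiplication by $f^{(p^e-1)/m}$, since $(p^e-1)D = \frac{p^e-1}{m}\dv(f)$ is integral and Cartier precisely when $m\mid p^e-1$ (guaranteed by $e_0\mid e$). Unwinding the definition of the degeneracy ideal, $r\in I_e^{\Delta+D}$ means $\phi(F_*^e(f^{(p^e-1)/m}r))\in\mf{m}$ for all relevant $\phi$, which is exactly the statement that $f^{(p^e-1)/m}r \in I_e^\Delta$, i.e. $r\in (I_e^\Delta : f^{(p^e-1)/m})$. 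Substituting this identification into the displayed limit gives the desired formula.

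For the final dichotomy, I would appeal to the theory of the $F$-splitting ratio. By Example~\ref{ex:mainSetupExample}, $\mf{p} \subseteq P_{\mscr{C}^{\Delta+D}}$, so $\dim R/P_{\mscr{C}^{\Delta+D}} \leq \dim \O_D = d-1$. When $D$ is precisely the minimal center of $F$-purity, Remark~\ref{rmk:SplittingPrimeLargestCompatibleIdeal} gives $P_{\mscr{C}^{\Delta+D}} = \mf{p}$, so the splitting dimension is exactly $d-1$; then Theorem~\ref{thm:F-splittingRatio} identifies the above limit (which is $\lim p^{-e(d-1)}\ell_R(R/I_e^{\Delta+D})$) with the $F$-splitting ratio $r_F(R,\Delta+D)$, and moreover with $s(R/P, \mscr{D}_P) = s(\O_D,\Diff_D(\Delta))$, consistent with the formula already derived. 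If $D$ is not the minimal center, then $P_{\mscr{C}^{\Delta+D}}$ strictly contains $\mf{p}$, forcing the splitting dimension to be strictly less than $d-1$; dividing the convergent quantity $p^{-e\dim(R/P)}\ell_R(R/I_e^{\Delta+D})$ by the extra factor $p^{e(d-1-\dim(R/P))}\to\infty$ shows the limit defining $s(\O_D,\Diff_D(\Delta))$ is $0$.

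The main obstacle I anticipate is the bookkeeping around ``sufficiently divisible $e$'': the identifications from Proposition~\ref{prop:CartierSubalgebraComparison} and the ceiling/rounding in $\lceil(p^e-1)\Delta\rceil$ versus the integral divisor $(p^e-1)D$ only match cleanly along the subsequence $e_0\mid e$, so I must be careful that the $F$-signature limit on $\O_D$ (a priori taken over all $e$, or over the semigroup $\Gamma$) genuinely agrees with the restricted limit over $e_0\mid e$. This is where Theorem~\ref{thm:F-signatureAndHK} and the established existence of the limits are needed to pass freely between full limits and divisible subsequences without changing the value.
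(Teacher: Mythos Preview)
Your proposal is correct and follows essentially the same route as the paper's proof: use Proposition~\ref{prop:CartierSubalgebraComparison}(2) to identify $\ell_{\O_D}(\O_D/I_e^{\Diff_D(\Delta)})$ with $\ell_R(R/I_e^{\Delta+D})$, then identify $I_e^{\Delta+D}$ with $(I_e^\Delta:f^{(p^e-1)/m})$ for $e_0\mid e$ (the paper does this by computing $\mscr{C}^{\Delta+D}_e=\mscr{C}^{\Delta,f^{1/m}}_e$, which is the same as your unwinding of the degeneracy ideal), and finally use the splitting-dimension bound $d'\le d-1$ to handle existence of the limit and the dichotomy. Your explicit attention to the subsequence issue is, if anything, more careful than the paper's treatment.
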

\begin{proof}
  The fact that $s(\O_D,\Diff_D(\Delta)) = 0$ when $D$ is not the minimal center of $F$-purity is an immediate consequence of Theorem \ref{thm:F-splittingRatio}. To see that the limit above computes $s(\O_D,\Diff_D(\Delta)$, note that by definition 
  \[
    s(\O_D, \Diff_D(\Delta)) = \lim_{e \to \infty} \frac{1}{p^{e(d-1)}}\ell_{\O_D}\left( \frac{\O_D}{I_e^{\Diff_D(\Delta)}} \right).
  \] 
  On the other hand, Proposition \ref{prop:CartierSubalgebraComparison} implies that
  \[
    \ell_{\O_D} \left( \frac{\O_D}{I_e^{\Diff_D(\Delta)}} \right) = \ell_{R}\left( \frac{R}{I_e^{\Delta + D}} \right)
  \]
  for large and divisible $e$. Furthermore for $e_0 | e$, we have 
  \begin{align*}
    \mscr{C}^{\Delta+D}_e &= \Hom_R(F_*^eR(\lceil(p^e-1)(\Delta+D) \rceil), R) \\
    &= \Hom_R\left(F_*^eR\left(\lceil (p^e-1)\Delta \rceil + \frac{p^e-1}{m}\dv(f)\right), R\right) \\
    &= \Hom_R(F_*^eR(\lceil (p^e-1)\Delta \rceil), R) \cdot F_*^e (f)^{(p^e-1)/m} \\
    &= \mscr{C}^{\Delta, f^{1/m}}_e.
  \end{align*}
  Hence $I_e^{\Delta + D} = (I_e^\Delta : f^{(p^e-1)/m})$ for $e_0|e$, and the proposition holds as long as the stated limit exists. However, $\ell_R(R/(I_e^\Delta:f^{(p^e-1)/m}))$ grows like $p^{ed'}$, where $d' = \dim{R / P_{\mscr{C}^{\Delta+D}}}$ is the splitting dimension of $(R,\Delta+D)$ (see \cite{BST1}). The ideal defining $D$ is $\mscr{C}^{\Delta+D}$-compatible, so $d' \leq d-1$ which implies that the limit exists. 
\end{proof}

\section{Cyclic Covers}\label{sec:VeroneseCovers}

To prove Theorem \ref{thm:MainTheorem}, we use a cyclic cover to reduce to the case where $D$ is Cartier. In this section, we review the cyclic cover construction and apply transformation rules for $F$-signature for finite maps \cite{CRST,Car,CRThesis}.

Let $(R,\mf{m})$ be a normal, local ring of characteristic $p > 0$. Let $D$ be a prime $\Q$-Cartier divisor of index $n$ with corresponding height 1 prime $\mf{p}$. So $nD = \div(f)$ for some $f \in R$. Consider the $\Z/n\Z$-graded ring $C(D) := \bigoplus_{i=0}^{n-1} R(-iD)t^i = \bigoplus_{i=0}^{n-1} \mf{p}^{(i)}t^i$ where $t^n = 1/f$ and multiplication is given by the natural pairing $R(iD) \otimes R(jD) \to R((i+j)D)$. Let $\pi : \Spec{C(D)} \to \Spec{R}$ be the finite map of schemes associated to the inclusion $R$ into $C(D)$ as the $i=0$ summand. This map is called the \emph{cyclic cover} of $(R,D)$. See \cite{TomWat} for a readable introduction to this construction.

\begin{proposition}\label{prop:CyclicCoverProperties}
  Let $(R,\mf{m})$, $D$, and $C(D)$ be defined as above.
  \begin{enumerate}
    \item The ring $C(D)$ is local with maximal ideal $\mf{n} = \mf{m} \oplus \left(\bigoplus_{i=1}^{n-1} R(-iD) \right)$, and $\pi$ is a finite map of degree $n$.
    \item If $R$ is strongly $F$-regular, then $C(D)$ is strongly $F$-regular.
    \item The divisor $D' := \pi^*D$ is a Cartier divisor on $\Spec{C(D)}$.
    \item The restriction $\pi|_{D'} : D' \to D$ is a finite cover of degree $n$.
  \end{enumerate}  
\end{proposition}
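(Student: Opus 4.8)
The plan is to handle the four claims in turn, using the $\mathbb{Z}/n\mathbb{Z}$-grading of $C(D)$ and the standard fact that $C(D)$ is a normal domain, module-finite over $R$ \cite{TomWat}; for parts (2) and (4) I would assume $(n,p)=1$, as holds in our application. For \emph{part (1)}, finiteness of $\pi$ and $\deg\pi=n$ are immediate, since $C(D)=\bigoplus_{i=0}^{n-1}R(-iD)$ is a finite $R$-module of generic rank $n$. For locality, the reduction is that a module-finite (hence integral) $R$-algebra over the local ring $(R,\mathfrak{m})$ is local iff its closed fiber $\bar C:=C(D)/\mathfrak{m}C(D)$ is local Artinian, since every maximal ideal of $C(D)$ contracts to $\mathfrak{m}$. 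It then suffices to prove the graded ideal $\bar C_{+}=\bigoplus_{i\geq 1}\bar C_i$ is nilpotent, for then $\bar C_{+}$ is the unique prime of $\bar C$. As $\bar C$ is graded Artinian, this reduces to checking that each homogeneous $\overline{a}\,t^{i}$ (with $1\leq i\leq n-1$ and $a\in\mathfrak{p}^{(i)}$) is nilpotent. Here $(\overline{a}\,t^{i})^{n}=\overline{a^{n}f^{-i}}\in\bar C_0=k$ vanishes unless $a^{n}=(\text{unit})f^{i}$; but the latter forces $\operatorname{div}(a)=iD$, so $iD$ would be principal, contradicting that the index $n$ makes $[D]$ have order exactly $n$ in $\Cl(R)$. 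This shows $C(D)$ is local and identifies $\mathfrak{n}$ as the preimage of $\bar C_{+}$, i.e. $\mathfrak{m}\oplus\bigoplus_{i=1}^{n-1}R(-iD)t^{i}$.

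For \emph{parts (3) and (4)} the unifying device is the element $g:=f\,t^{n-1}=t^{-1}\in C(D)_{n-1}$ (legitimate since $f\in\mathfrak{p}^{(n)}\subseteq\mathfrak{p}^{(n-1)}$); from $t^{n}=1/f$ one computes $g^{n}=f$. Since $\operatorname{div}_{C(D)}(f)=\pi^{*}\operatorname{div}_{R}(f)=\pi^{*}(nD)=nD'$, we get $n\operatorname{div}(g)=nD'$, hence $D'=\operatorname{div}(g)$ is principal and therefore Cartier, giving (3). For (4) I would compute the generic degree of $\pi|_{D'}$ by localizing at $\mathfrak{p}$: over the DVR $R_{\mathfrak{p}}$ with uniformizer $\varpi$ the cover takes the clean form $C(D)_{\mathfrak{p}}=R_{\mathfrak{p}}[\tau]/(\tau^{n}-u)$ with $\tau=\varpi t$ a unit and $u\in R_{\mathfrak{p}}^{\times}$, so that $g=(\text{unit})\varpi$ generates $\mathfrak{p}C(D)_{\mathfrak{p}}$. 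Hence $\O_{D'}\otimes_{\O_D}k(D)=C(D)_{\mathfrak{p}}/\mathfrak{p}C(D)_{\mathfrak{p}}\cong\kappa(\mathfrak{p})[\tau]/(\tau^{n}-\bar u)$ has dimension $n$ over $k(D)=\kappa(\mathfrak{p})$, so $\deg(\pi|_{D'})=n$.

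\emph{Part (2)} is where I expect the real work. The strategy is to show that $\pi$ is finite and étale in codimension one, and then to invoke the transformation rules for $F$-signature under finite maps \cite{CRST,Car,CRThesis}, by which a finite quasi-étale cover of a strongly $F$-regular local ring is again strongly $F$-regular (positivity of the $F$-signature being preserved). Étaleness in codimension one is checked one height-one prime $\mathfrak{q}$ at a time: the localization $C(D)_{\mathfrak{q}}$ has the form $R_{\mathfrak{q}}[\sigma]/(\sigma^{n}-u_{\mathfrak{q}})$ with $u_{\mathfrak{q}}$ a unit (because $v_{\mathfrak{q}}(f)\in\{0,n\}$), which is étale over $R_{\mathfrak{q}}$ precisely because $(n,p)=1$. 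This is the step leaning hardest on machinery outside the excerpt, and the coprimality hypothesis is essential here—without it $\pi$ is wildly ramified along $D$ and the argument breaks—so I would state $(n,p)=1$ explicitly. Among the elementary points, the order-$n$ argument underpinning locality in (1) is the one to get right.
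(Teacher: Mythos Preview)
Your arguments for (1), (3), and (4) are correct but take a somewhat different route from the paper. For (1) the paper simply cites \cite[Proposition~5.3.1]{CRThesis}, while you supply the direct nilpotency argument in the closed fiber; for (3) your computation is essentially the paper's. For (4) the paper does more than compute the generic degree: it identifies the ideal $(ft^{n-1})\subseteq C(D)$ degree by degree and obtains the global graded description $\O_{D'}\cong R/\mf{p}\oplus(\mf{p}/\mf{p}^{(2)})t\oplus\cdots\oplus(\mf{p}^{(n-1)}/\mf{p}^{(n)})t^{n-1}$, from which the degree is read off. That explicit form is reused later (e.g.\ to match residue fields in the proof of Proposition~\ref{prop:TransformationRuleSpecialCase}), so your localization shortcut, while valid for the degree count, forfeits information the paper needs downstream. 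Note also that your assumption $(n,p)=1$ is unnecessary for (4): $\dim_{\kappa(\mf{p})}\kappa(\mf{p})[\tau]/(\tau^n-\bar u)=n$ regardless of whether $p\mid n$.

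The real gap is in (2). The proposition carries no coprimality hypothesis, and your assertion that $(n,p)=1$ ``holds in our application'' is not right: in the proof of Theorem~\ref{thm:MainTheorem} the index assumed prime to $p$ is that of $K_R+\Delta+D$, not of $D$, and Remark~\ref{rmk:WhatDoesTDo} makes explicit that the paper needs the cyclic cover step even when $p$ divides the index of $D$. Your quasi-\'etale argument genuinely fails there, as you acknowledge. The paper's proof of (2) is a citation to \cite[Corollary~6.3.1]{CRThesis}; the underlying mechanism, however, is already in the paper and would repair your argument: take $T\colon C(D)\to R$ to be projection onto degree~$0$ rather than the trace. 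This $T$ generates $\Hom_R(C(D),R)$ so $D_T=0$, it is surjective with $T(\mf{n})=\mf{m}$, and the residue fields agree by (1). Theorem~\ref{thm:TransformationRuleF-Sig} then gives $s(C(D))=n\cdot s(R)>0$ with no separability or coprimality assumption.
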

\begin{proof}
  For (1), the maximal ideal of $C(D)$ is computed in \cite[Proposition 5.3.1]{CRThesis}. Since each $R(-iD)$ is a rank one reflexive module, we see that $\Frac{C(D)} = C(D) \otimes_R \Frac{R} \cong (\Frac{R})^{\oplus n}$, so the map is finite of degree $n$ (see \cite{TomWat} for more details).
  
  Part (2) is \cite[Corollary 6.3.1]{CRThesis}.
  
  For (3), let $nD = \div(f)$. Then $(ft^{n-1})^n = f^n / f^{n-1} = f$ in $C(D)$. So
  \[
    n\cdot \pi^*D = \pi^* \div_R(f) = \div_{C(D)}(f) = \div_{C(D)}((ft^{n-1})^n) = n\cdot \div_{C(D)}(ft^{n-1}).
  \]
  Dividing both sides by $n$ gives $\pi^*D = \div_{C(D)}(ft^{n-1})$ as required. 
  
  For (4), we compute the ring $\O_{D'}$ explicitly. Let $\mf{p}$ be the ideal of $D$ in $R$. By (3), the ideal defining $D'$ is $(ft^{n-1})$. We claim that $\O_{D'} = R/\mf{p} \oplus \left(\mf{p} / \mf{p}^{(2)}\right)t \oplus \cdots \oplus \left(\mf{p}^{(n-1)} / \mf{p}^{(n)}\right)t^{n-1}$. Clearly $(ft^{n-1}) \cap R = \mf{p}$. For the higher degree terms, any element of $(ft^{n-1}) \cap \mf{p}^{(j)}$ is of the form $(ft^{n-1})(at^{j+1})$ for $a \in \mf{p}^{j+1}$. Then $(ft^{n-1})(at^{j+1}) = aft^{n+j} = aft^j/f = at^j \in \mf{p}^{(j+1)}t^j$. For the reverse inclusion, if $at^j \in \mf{p}^{(j+1)}t^j$, then $at^j = at^{j+1}\cdot ft^{n-1} \in (ft^{n-1})$. The map $D' \to D$ is induced by the inclusion of $R / \mf{p}$ into the degree 0 piece of $\O_{D'}$, and the result follows. 
\end{proof}

The behavior of $F$-signature under finite maps is well understood \cite{CRST,Car,CRThesis}. We will use the following transformation rule to lift our calculation to the cyclic cover.

\begin{theorem}[{\cite[Theorem 3.0.1]{CRThesis}, \cite[Theorem 4.9]{Car}}]\label{thm:TransformationRuleF-Sig}
  Let $(R,\mf{m}) \subseteq (S,\mf{n})$ be a finite local extension of normal domains with corresponding morphism of schemes $f : Y \to X$. Let $\Delta$ be an effective $\Q$-divisor on $X$. Suppose there is a nonzero morphism of $S$-modules $\tau : S \to \Hom_R(f_*S, R) = \omega_{S/R}$ such that $T:= \tau(1)$ is surjective, $T(\mf{n}) \subseteq \mf{m}$ and $\Delta^* = f^*\Delta - D_T$ is effective on $Y$. Then 
  \[
    [\kappa(\mf{n}):\kappa(\mf{m})] \cdot s(S,\Delta^*) = [\Frac{S}:\Frac{R}]\cdot s(R,\Delta).
  \]
  Furthermore, if $(R,\mf{m}) \subseteq (S,\mf{n})$ is merely a local extension where $R$ is a domain and $S$ is a reflexive $R$-module and $\tau$ is an isomorphism, then
  \[
    [\kappa(\mf{n}), \kappa(\mf{m})]\cdot s(S,\Delta^*) = \dim_KS_K\cdot s(R)
  \]
  where $K \to S_K$ is the generic fiber of $R \subseteq S$.
\end{theorem}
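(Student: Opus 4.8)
The plan is to reduce the asserted identity of $F$-signatures to an asymptotic comparison of colengths of the degeneracy ideals $I_e^{\Delta^*}$ on $S$ and $I_e^{\Delta}$ on $R$, and then to match these colengths through the map $T$. By Proposition \ref{prop:F-signatureInTermsOfLength} (applied with trivial ideal) one has $s(R,\Delta) = \lim_e p^{-ed}\,\ell_R(R/I_e^{\Delta})$ and $s(S,\Delta^*) = \lim_e p^{-ed}\,\ell_S(S/I_e^{\Delta^*})$, where $d = \dim R = \dim S$ since $R \subseteq S$ is finite. Converting the $S$-length to an $R$-length via $\ell_R(M) = [\kappa(\mf n):\kappa(\mf m)]\,\ell_S(M)$ for every finite-length $S$-module $M$ (each $S$-composition factor $\kappa(\mf n)$ contributes $[\kappa(\mf n):\kappa(\mf m)]$ to the $R$-length), the first assertion becomes the statement that $\ell_R(S/I_e^{\Delta^*}) - [\Frac S:\Frac R]\,\ell_R(R/I_e^{\Delta})$ is $o(p^{ed})$. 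This isolates the two index factors at once: the residue degree enters purely through length conversion, and the generic degree $[\Frac S:\Frac R] = \rank_R S$ must emerge from comparing the two degeneracy quotients as $R$-modules.

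The heart of the argument is a dictionary between the twisted Cartier algebras $\mscr{C}^{\Delta^*}$ on $S$ and $\mscr{C}^{\Delta}$ on $R$ induced by $T$. Given $\psi \in \mscr{C}^{\Delta^*}_e \subseteq \Hom_S(F_*^e S, S)$, restricting to $F_*^e R$ and post-composing with $T \colon S \to R$ yields $T \circ \psi|_{F_*^e R} \in \Hom_R(F_*^e R, R)$; the hypothesis that $\Delta^* = f^*\Delta - D_T$ is effective is precisely what forces this to land in $\mscr{C}^{\Delta}_e$, since $D_T$ records the divisor of $T$ and absorbs the ramification of $f$. In the reverse direction I would invoke Grothendieck duality for the finite map $f$, namely the adjunction $\Hom_R(F_*^e S, R) \cong \Hom_S(F_*^e S, \omega_{S/R})$, and use the $S$-module map $\tau \colon S \to \omega_{S/R}$ to identify $\omega_{S/R}$ with the twist of $S$ by $D_T$; this exhibits $I_e^{\Delta^*}$ as, up to the $D_T$-correction already built into $\Delta^*$, the extension $I_e^{\Delta} \cdot S$. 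The surjectivity of $T$ together with $T(\mf n) \subseteq \mf m$ guarantees that this dictionary carries surjective operators to surjective operators, so that $F$-purity, $F$-regularity, and maximal free summands are preserved under the correspondence.

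With the identification $I_e^{\Delta^*} \approx I_e^{\Delta} \cdot S$ in hand, the remaining point is the asymptotic length estimate $\ell_R(S/I_e^{\Delta} S) \sim [\Frac S:\Frac R]\,\ell_R(R/I_e^{\Delta})$. This is where the generic degree appears: $S$ is a finite $R$-module of generic rank $\rank_R S = [\Frac S:\Frac R]$, so $S$ and $R^{\oplus \rank_R S}$ agree after inverting any nonzero element of $R$ and differ only on a locus of smaller dimension. Quotienting by the shrinking ideals $I_e^{\Delta}$ (whose colengths grow like $p^{ed}$) and applying an associativity-formula / Hilbert--Kunz type argument as underlies Theorem \ref{thm:F-signatureAndHK}, the contribution of this lower-dimensional discrepancy is $o(p^{ed})$, yielding the desired leading-order equality. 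Combining the three steps gives $[\kappa(\mf n):\kappa(\mf m)]\,s(S,\Delta^*) = [\Frac S:\Frac R]\,s(R,\Delta)$.

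I expect the main obstacle to be the second step: making the Cartier dictionary functorial and compatible with the divisorial twists precisely enough that the different/ramification contribution is \emph{exactly} $D_T$ and no spurious correction survives. This is the crux of why one works with $\Delta^* = f^*\Delta - D_T$ rather than $f^*\Delta$, and it requires care with Grothendieck duality and with the normalization of the correspondence \eqref{eq:DivisorMapCorrespondence}. For the second statement the same scheme applies with simplifications: $\tau$ being an isomorphism forces $D_T = 0$, so the duality identification $\omega_{S/R} \cong S$ is untwisted and the dictionary becomes an honest isomorphism of Cartier algebras, while the passage from the domain case to the merely reflexive case only replaces $[\Frac S:\Frac R]$ by the generic-fiber dimension $\dim_K S_K = \rank_R S$, which is the correct generic rank when $S$ is no longer a domain.
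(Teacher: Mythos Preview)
The paper does not prove this theorem: it is quoted verbatim as a cited result from \cite[Theorem 3.0.1]{CRThesis} and \cite[Theorem 4.9]{Car}, with no proof environment following the statement (only Remarks \ref{rmk:WhatDoesTDo} and \ref{rmk:GeneralCartierTransformationRule} commenting on it). So there is no ``paper's own proof'' against which to compare your proposal.

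That said, your outline is broadly the right shape for how the cited references actually argue, and the three-step decomposition (length conversion via residue degree, a Cartier-algebra dictionary through $T$, and an asymptotic rank computation for $\ell_R(S/I_e^{\Delta}S)$) is sound. One caution on your second step: the identification you want is not literally $I_e^{\Delta^*} = I_e^{\Delta}\cdot S$ but rather $I_e^{\Delta^*} = T^{-1}(I_e^{\Delta})$ (preimage, not extension); the asymptotic colength comparison then goes through $\ell_R(S/T^{-1}(I_e^{\Delta})) = \ell_R(R/I_e^{\Delta}) \cdot \rank_R S + o(p^{ed})$, which is closer to how \cite{CRST,Car,CRThesis} handle it. The extension $I_e^{\Delta}S$ sits inside $T^{-1}(I_e^{\Delta})$ but need not equal it, and conflating the two would be a genuine gap if you were writing this out in full.
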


\begin{remark}\label{rmk:WhatDoesTDo}
  This theorem is more general than the one proven in \cite{CRST} which assumes that the extension is separable. In the separable case, the field trace $\Tr_{L/K} : L \to K$ restricts to an $R$-linear map $\Tr_{S/R} : S \to R$. Theorem \ref{thm:TransformationRuleF-Sig} generalizes \cite{CRST} by replacing $\Tr_{S/R}$ with the map $T$ which satisfies similar formal properties. This extra generality allows us to extend our theorem to the case where $p$ divides the index of $D$. Indeed for cyclic covers, we have the map $T : C(D) \to R$ given by projection onto the degree 0 summand. This divisor generates $\Hom_R(C(D), R)$ as a $C(D)$-module, so $D_T = 0$. For further details, see \cite{CRThesis}.
\end{remark}

\begin{remark}\label{rmk:GeneralCartierTransformationRule}
  Other invariants of Cartier subalgebras seem to behave well with respect to finite morphisms. In \cite{CarSta}, Carvajal-Rojas and St\"{a}bler systematically treat the behavior of $F$-signature, $F$-splitting ratio, splitting primes, and the test ideal of arbitrary Cartier subalgebras under finite maps.
\end{remark}

In order to transfer our problem to a cyclic cover, we must understand how the different behaves in this construction. To accomplish this, we need the following transposition criterion \cite[Theorem 5.7]{SchTuckFiniteMaps}. In the setting of Theorem \ref{thm:TransformationRuleF-Sig}, we say that $\psi : F^e_*S \to S$ is a \emph{transpose} of $\phi: F_*^e R \to R$ along $T : S \to R$ if the following diagram commutes
\begin{center}
  \begin{tikzcd}
    F_*^eS \arrow[r, "\psi"] \arrow[d, "F_*^eT"']& S \arrow[d, "T"] \\
    F_*eR \arrow[r, "\phi"] & R.
  \end{tikzcd}
\end{center}
In the following theorem, we denote the divisor associated to a map $\phi : F_*^e R \to R$ by $\Delta_\phi$. If $R \subseteq S$ is a finite inclusion of rings, then an $R$-linear map $T :S \to R$ yields a divisor $D_T$ on $S$ via duality as in the correspondence \eqref{eq:DivisorMapCorrespondence} (see \cite{CRThesis} for a nice exposition). 

\begin{theorem}[{\cite[Theorem 5.7]{SchTuckFiniteMaps}}]\label{thm:SchwedeTuckerTranspose}
  Let $R \subseteq S$ and $T : S \to R$ be as in Theorem \ref{thm:TransformationRuleF-Sig}, and let $f : \Spec{S} \to \Spec{R}$ be the corresponding map of schemes. Then $\phi : F_*^e R \to R$ has a transpose along $T$ if and only if $f^*\Delta_\phi - D_T$ is effective. If it exists, the transpose $\psi$ is unique and the divisor on $\Spec{S}$ associated to $\psi$ is $f^*\Delta_\phi - D_T$.
\end{theorem}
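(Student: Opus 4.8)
The plan is to run everything through Grothendieck duality for the finite map $f$, turn the transpose condition into a factorization problem, and then reduce both assertions -- existence of $\psi$ and the formula for its divisor -- to a computation at height one primes of $S$. First I would record the adjunction isomorphism
\[
  \Theta \colon \Hom_R(F_*^e S, R) \xrightarrow{\sim} \Hom_S(F_*^e S, \omega_{S/R}), \qquad \omega_{S/R} := \Hom_R(S,R),
\]
coming from tensor--Hom adjunction (equivalently, from $\omega_{S/R} = f^!\O_R$ for a finite morphism), under which a map $\alpha$ corresponds to $x \mapsto (s \mapsto \alpha(s\cdot x))$. The transpose condition $T \circ \psi = \phi \circ F_*^e T$ lives in $\Hom_R(F_*^e S, R)$, so I would apply $\Theta$ to both sides. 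A short check of the adjunction shows that the left-hand side becomes $(\cdot T)\circ \psi$, where $\cdot T \colon S \hookrightarrow \omega_{S/R}$ is multiplication by $T$, while the right-hand side becomes a fixed element $\Psi := \Theta(\phi\circ F_*^e T) \in \Hom_S(F_*^e S, \omega_{S/R})$. Thus $\psi$ is a transpose of $\phi$ along $T$ if and only if $\Psi$ factors through the subsheaf $\cdot T \colon S \hookrightarrow \omega_{S/R}$. Since $T \neq 0$ and $\omega_{S/R}$ is torsion free, $\cdot T$ is injective, so $\Hom_S(F_*^e S, S) \hookrightarrow \Hom_S(F_*^e S, \omega_{S/R})$ is injective; this already yields the uniqueness claim, as any factorization of $\Psi$ is unique.

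It then remains to decide when $\Psi$ factors through the reflexive subsheaf $S\cdot T \subseteq \omega_{S/R}$ and, when it does, to identify the divisor of the factorization. Because $S\cdot T$ is reflexive (it is isomorphic to $S$), it equals the intersection of its localizations at the height one primes of $S$, so $\Psi$ factors globally if and only if its image lands in $S\cdot T$ after localizing at each height one prime. By normality, both the effectivity of $f^*\Delta_\phi - D_T$ and the proposed equality $\Delta_\psi = f^*\Delta_\phi - D_T$ are coefficientwise conditions along prime divisors of $\Spec S$, so I would fix a height one prime $\mf q$ with contraction $\mf p$ and pass to completions, where $S_{\mf q}$ and $R_{\mf p}$ are discrete valuation rings.

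In this local setting everything is monomial. Using the correspondence \eqref{eq:DivisorMapCorrespondence} (via the duality isomorphism $\Hom_R(F_*^e R, R) \cong R((1-p^e)K_R)$ of \cite{ResAndDua}), the map $\phi$ is a generator twisted by $\ord_{\mf p}(D_\phi) = (p^e-1)\ord_{\mf p}(\Delta_\phi)$, the functional $T$ is a generator of $\omega_{S/R}$ twisted by $\ord_{\mf q}(D_T)$, and $\Psi$ is their composite transported through $\Theta$. A direct valuation computation then shows that $\Psi$ lands in $(S\cdot T)_{\mf q}$ exactly when $e_{\mf q/\mf p}\ord_{\mf p}(\Delta_\phi) - \ord_{\mf q}(D_T) = \ord_{\mf q}(f^*\Delta_\phi - D_T) \ge 0$, and that in this case the order of the factorization $\psi$ along $\mf q$ equals $\ord_{\mf q}(f^*\Delta_\phi - D_T)$. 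Running this over all height one primes gives existence $\iff$ effectivity together with the divisor formula, and reflexivity of $\Hom_S(F_*^e S, S)$ extends the codimension one factorization to the desired global map. I expect the main obstacle to be the bookkeeping in this last step: one must track the twists in the three duality isomorphisms, the compatibility of Grothendieck duality with $f^*$ and with $K_S \sim f^*K_R + D_T$, and the ramification factor $e_{\mf q/\mf p}$, all while keeping the normalizing factor $(p^e-1)$ consistent. The linear equivalence $(p^e-1)(K_S + f^*\Delta_\phi - D_T) \sim 0$ serves as a useful consistency check on these normalizations.
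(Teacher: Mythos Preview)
The paper does not prove this statement: Theorem~\ref{thm:SchwedeTuckerTranspose} is quoted from \cite[Theorem 5.7]{SchTuckFiniteMaps} and used as a black box, with no proof or sketch supplied. So there is nothing in the paper to compare your proposal against.

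That said, your outline is a reasonable reconstruction of how such a result is typically established. The key moves---passing through the adjunction $\Hom_R(F_*^e S,R)\cong \Hom_S(F_*^e S,\omega_{S/R})$, recasting the transpose equation as a factorization through the rank-one reflexive subsheaf $S\cdot T\subseteq \omega_{S/R}$, deducing uniqueness from torsion-freeness, and then checking existence and the divisor identity at height-one primes via the divisor--map correspondence~\eqref{eq:DivisorMapCorrespondence}---are exactly the ingredients one expects, and this is in the same spirit as the argument in \cite{SchTuckFiniteMaps}. Your own caveat is well placed: the only delicate point is the bookkeeping at the DVR step, where one must keep the three duality identifications (for $F^e$ on $R$, for $F^e$ on $S$, and for $f$) and the ramification index $e_{\mf q/\mf p}$ aligned so that the order of $\Psi$ relative to $S\cdot T$ really comes out to $(p^e-1)\bigl(e_{\mf q/\mf p}\ord_{\mf p}(\Delta_\phi)-\ord_{\mf q}(D_T)\bigr)$. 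If you carry that out carefully (or simply cite \cite{SchTuckFiniteMaps}), the argument goes through.
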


Now, we come to the crucial proposition which allows us to reduce to the Cartier case in the proof of the main theorem.

\begin{proposition}\label{prop:TransformationRuleSpecialCase}
  Let $(R,\Delta)$ be a strongly $F$-regular pair, where $R$ is an $F$-finite local ring and $\Delta \geq 0$. Let $D$ be a reduced, irreducible $\Q$-Cartier divisor such that $\O_{D}$ is strongly $F$-regular. For the extension $R \subseteq C(D)$ described above, with associated map of schemes $\pi : \Spec{C(D)} \to \Spec{R}$, we have the following.
  \begin{enumerate}
    \item $m\cdot s(R,\Delta) = s(C(D), \pi^*\Delta)$.
    \item If $D' = \pi^*D$, then $\O_{D'}$ is strongly $F$-regular.
    \item If the index of $(R, \Delta+D)$ is prime to $p$, then $m\cdot s(D,\Diff_D(\Delta)) = s(D', \Diff_{D'}(\pi^*\Delta))$.
  \end{enumerate}
\end{proposition}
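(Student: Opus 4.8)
The plan is to deduce all three statements from the transformation rule for $F$-signature (Theorem \ref{thm:TransformationRuleF-Sig}), applied to the cover $\pi$ in parts (1)--(2) and to its restriction $\pi|_{D'}$ in part (3). Throughout I use the map $T \colon C(D) \to R$ given by projection onto the degree-$0$ summand. As recorded in Remark \ref{rmk:WhatDoesTDo}, $T$ generates $\Hom_R(C(D),R)$ as a $C(D)$-module, so $D_T = 0$; moreover $T(1)=1$ is surjective and $T(\mf{n}) \subseteq \mf{m}$ because every positive-degree summand of $C(D)$ lies in $\mf{n}$. I write $m$ for the index of $D$, which by Proposition \ref{prop:CyclicCoverProperties} is also the degree of $\pi$.

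For part (1), I apply Theorem \ref{thm:TransformationRuleF-Sig} directly to $R \subseteq C(D)$ with the map $T$. Since $D_T = 0$, the divisor $\Delta^* = \pi^*\Delta - D_T = \pi^*\Delta$ is effective, so the hypotheses hold. The residue field extension is trivial, $\kappa(\mf{n}) = C(D)/\mf{n} = R/\mf{m} = \kappa(\mf{m})$, while $[\Frac C(D) : \Frac R] = m$, so the transformation rule reads $s(C(D),\pi^*\Delta) = m\cdot s(R,\Delta)$; note this step is insensitive to whether the index is prime to $p$. For part (2), I would identify $\O_{D'}$ with a cyclic cover of $\O_D$. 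Using the description $\O_{D'} = \bigoplus_{i=0}^{m-1}(\mf{p}^{(i)}/\mf{p}^{(i+1)})t^i$ from Proposition \ref{prop:CyclicCoverProperties}(4), the graded pieces $\mf{p}^{(i)}/\mf{p}^{(i+1)}$ are rank-one reflexive $\O_D$-modules exhibiting $\O_{D'}$ as the cyclic cover of $\O_D$ attached to a suitable $\Q$-Cartier divisor $E$ on $D$ (governed by the conormal geometry of $D$, i.e.\ by $\Diff_D(0)$). Since $\O_D$ is strongly $F$-regular, Proposition \ref{prop:CyclicCoverProperties}(2) gives that $\O_{D'}$ is strongly $F$-regular, hence a normal domain; this is exactly what is needed to run the transformation rule in part (3). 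A bonus of this identification is that, arguing as in Remark \ref{rmk:WhatDoesTDo}, the degree-$0$ projection $T' \colon \O_{D'} \to \O_D$ (the restriction of $T$) generates $\Hom_{\O_D}(\O_{D'},\O_D)$, so $D_{T'} = 0$.

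For part (3) the crux is the divisor identity $\Diff_{D'}(\pi^*\Delta) = (\pi|_{D'})^*\Diff_D(\Delta)$. To prove it I track the different through the cover with the transposition criterion (Theorem \ref{thm:SchwedeTuckerTranspose}). Fix $e$ with $(p^e-1)(K_X+\Delta+D)$ Cartier and let $\phi_{\Delta+D}$ be the associated map. Because $D_T = 0$, its transpose along $T$ has divisor $\pi^*(\Delta+D) - D_T = \pi^*\Delta + D'$, so the transpose is (up to a unit) $\phi_{\pi^*\Delta+D'}$. The ideals of $D$ and $D'$ are compatible, so the transpose square descends, via the quotients of Definition \ref{def:FDifferent}, to a commuting square relating $\overline{\phi_{\Delta+D}}$ and $\overline{\phi_{\pi^*\Delta+D'}}$ along $T'$; since $T$ sends the ideal of $D'$ into $\mf{p}$, this $T'$ is the degree-$0$ projection, with $D_{T'}=0$ from part (2). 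Applying Theorem \ref{thm:SchwedeTuckerTranspose} now to $\O_D \subseteq \O_{D'}$ identifies the divisor of $\overline{\phi_{\pi^*\Delta+D'}}$, namely $\Diff_{D'}(\pi^*\Delta)$, with $(\pi|_{D'})^*\Diff_D(\Delta) - D_{T'} = (\pi|_{D'})^*\Diff_D(\Delta)$. With the identity established, I apply Theorem \ref{thm:TransformationRuleF-Sig} to $\O_D \subseteq \O_{D'}$ (a degree-$m$ extension of normal domains by part (2) and Proposition \ref{prop:CyclicCoverProperties}(4)) with the map $T'$ and $\Delta^* = \Diff_{D'}(\pi^*\Delta)$; the residue field extension is again trivial, yielding $s(\O_{D'},\Diff_{D'}(\pi^*\Delta)) = m\cdot s(\O_D,\Diff_D(\Delta))$. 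The hypothesis that the index is prime to $p$ enters precisely here, since the construction of the different in Definition \ref{def:FDifferent} requires it.

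I expect the main obstacle to be the structural input underlying part (2): verifying carefully that $\O_{D'}$ really is a cyclic cover of $\O_D$, i.e.\ matching the graded pieces $\mf{p}^{(i)}/\mf{p}^{(i+1)}$ with the reflexive powers of a single $\Q$-Cartier class and checking that the multiplication wraps correctly through $t^m$. This identification is what simultaneously supplies normality (through Proposition \ref{prop:CyclicCoverProperties}(2)) and the vanishing $D_{T'}=0$ that makes the transpose computation in part (3) clean. The secondary delicate point is confirming that the transpose square on $R \subseteq C(D)$ genuinely descends to a transpose square on $\O_D \subseteq \O_{D'}$ — a compatibility between cyclic-cover duality and $F$-adjunction that must be checked at the level of $p^{-e}$-linear maps rather than merely on divisor classes.
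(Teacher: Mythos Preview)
Your approach to parts (1) and (3) is essentially the same as the paper's: apply Theorem \ref{thm:TransformationRuleF-Sig} to $R \subseteq C(D)$ with the degree-$0$ projection $T$ for (1), and for (3) use the transposition criterion (Theorem \ref{thm:SchwedeTuckerTranspose}) to show that the map defining $\Diff_{D'}(\pi^*\Delta)$ is the transpose along $\overline{T}$ of the map defining $\Diff_D(\Delta)$, then apply the transformation rule to $\O_D \subseteq \O_{D'}$. The paper organizes (3) as a commutative cube but the content is identical to your transpose-square descent. One minor point: the paper does not need to know that $D_{\overline{T}} = 0$; it simply establishes $\Diff_{D'}(\pi^*\Delta) = (\pi|_{D'})^*\Diff_D(\Delta) - D_{\overline{T}}$ and feeds this directly into Theorem \ref{thm:TransformationRuleF-Sig}, so your extra claim $D_{T'}=0$ is not required for the argument to go through.

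The genuine divergence is in part (2). You propose to identify $\O_{D'}$ as a cyclic cover of $\O_D$ attached to some $\Q$-Cartier class and then invoke Proposition \ref{prop:CyclicCoverProperties}(2). The paper bypasses this identification entirely: it observes only that $\O_{D'}$ is a $\Z/n\Z$-graded ring over $\O_D$, hence $S_2$ (and so reflexive) as an $\O_D$-module by \cite{TomWat}, and then applies the \emph{second} clause of Theorem \ref{thm:TransformationRuleF-Sig} (the version requiring merely that $S$ be reflexive over $R$ and $\tau$ an isomorphism) to conclude $s(\O_{D'}) > 0$. Your route would work if the cyclic-cover identification holds, but as you yourself flag, matching the graded pieces $\mf{p}^{(i)}/\mf{p}^{(i+1)}$ with reflexive powers $\O_D(-iE)$ of a fixed $\Q$-Cartier class, and checking the multiplication wraps correctly, is real work that the paper simply avoids. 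The payoff of the paper's approach is that it needs no structural analysis of $\O_{D'}$ beyond the grading; the payoff of yours, if carried out, would be the bonus $D_{T'}=0$, which as noted is not actually needed downstream.
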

\begin{proof}
  By Proposition \ref{prop:CyclicCoverProperties}, $C(D)$ is a normal local ring with maximal ideal $\mf{n} = \mf{m} \oplus \left( \bigoplus_{i=1}^{m-1} R(-iD)u^i \right)$. In particular, $\kappa(\mf{m}) = \kappa(\mf{n})$. Moreover, $[\Frac{S} : \Frac{R}] = m$ since each $R(iD)$ is a rank one reflexive sheaf. Hence the transformation rule (1) holds by Theorem \ref{thm:TransformationRuleF-Sig} and the fact that the projection to the degree 0 piece $T : C(D) \to R$ generates $\Hom_R(C(D),R)$ as a $C(D)$-module. Note that this statement does not require the strong $F$-regularity of $D$.
  
  For (2), $\O_{D'}$ is a $\Z/n\Z$-graded ring over $\O_D$ so is $S_2$ by \cite{TomWat}. So the final statement of Theorem \ref{thm:TransformationRuleF-Sig} implies that $s(\O_{D'}) > 0$ as long as $s(\O_D) > 0$. So $F$-regularity of $\O_{D'}$ follows from the $F$-regularity of $D$.
  
  For (3), since strongly $F$-regular rings are normal, we have the divisor-maps correspondence discussed in Section \ref{subsec:Different}. Let $\pi|_{D'} : D' \to D$ be the restriction of $\pi$ to $D'$. Let $\overline{T} : \O_{D'} \to \O_D$ be the restriction of the projection map $T : C(D) \to R$. Note that $\overline{T}$ is surjective and $\overline{T}(\mf{m}_{\O_{D}}) = \mf{m}_{\O_D}$, so it satisfies the hypotheses on the map in Theorem \ref{thm:TransformationRuleF-Sig}. Furthermore, the residue fields of $\O_D$ and $\O_{D'}$ agree by the explicit calculation of $\O_{D'}$ done in Proposition \ref{prop:CyclicCoverProperties}. So Theorem \ref{thm:TransformationRuleF-Sig} allows us to conclude once we show that $\Diff_{D'}(\pi^*\Delta) = \pi|_{D'}^*(\Diff_D(\Delta)) - D_{\overline{T}}$. To that end, consider the following diagrams defining $\Diff_D(\Delta)$ and $\Diff_{D'}(\pi^*\Delta)$. The vertical arrows are the natural quotient maps.
  \begin{center}
    \begin{tikzcd}
      F^e_*R \arrow[r, "\phi_{\Delta+D}"] \arrow[d] & R \arrow[d] & & F^e_*C(D) \arrow[r, "\phi_{\pi^*\Delta + D'}"] \arrow[d] & C(D) \arrow[d] \\
      F^e_*\O_D \arrow[r, "\phi_{\Diff_{D}(\Delta)}"] & \O_D & & F^e_*\O_{D'} \arrow[r, "\phi_{\Diff_{D'}(\pi^*\Delta)}"] & \O_{D'}
    \end{tikzcd}
  \end{center}
  We may connect these two diagrams with $T$ and $\overline{T}$ to form the following commutative cube

  \begin{center}
    \begin{tikzcd}
      & F^e_*C(D) \arrow[dl, "F_*^eT"'] \arrow[rrr, "\phi_{\pi^*\Delta + D'}"] \arrow[dd] & & & C(D) \arrow[dl, "T"] \arrow[dd] \\
      F^e_*R \arrow[rrr, crossing over, "\phi_{\Delta+D}" near end] \arrow[dd] & & & R \\
      & F^e_*\O_{D'} \arrow[dl, "F_*^e\overline{T}"'] \arrow[rrr, "\phi_{\Diff_{D'}(\pi^*\Delta)}"  near start] & & & \O_{D'} \arrow[dl, "\overline{T}"] \\
      F^e_*\O_{D} \arrow[rrr, "\phi_{\Diff_D(\Delta)}" ] & & & \O_{D} \arrow[from=uu, crossing over]\\
    \end{tikzcd}
  \end{center}
  The top of the cube commutes by Theorem \ref{thm:SchwedeTuckerTranspose} and the fact that $D_T = 0$. In particular, $\phi_{\Diff_{D'}(\pi^*\Delta)}$ lifts the map $\phi_{\Diff_D(\Delta)}$, and the statement follows from another application of Theorem \ref{thm:SchwedeTuckerTranspose}. 
\end{proof}

\begin{lemma}\label{lem:IndexPrimeToP}
  Let $R$ be a local ring, $D$ an integral $\Q$-Cartier divisor on $X = \Spec{R}$, and $\pi : Y = \Spec{C(D)} \to X$ be the associated cyclic cover. If $(X,\Delta)$ is a pair of index prime to $p$, then $(Y,\pi^*\Delta)$ has index prime to $p$.
\end{lemma}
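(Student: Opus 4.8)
The plan is to reduce the statement to the fact that the cyclic cover $\pi \colon Y = \Spec C(D) \to X = \Spec R$ is \emph{crepant}, i.e. $K_Y = \pi^* K_X$, and then to pull back Cartierness. Granting crepancy, reflexive pullback is additive on divisor classes, so $K_Y + \pi^*\Delta = \pi^*(K_X + \Delta)$. If $(X,\Delta)$ has index $m$ prime to $p$, then $m(K_X+\Delta)$ is Cartier, and since the reflexive pullback of a line bundle is again a line bundle, $m(K_Y + \pi^*\Delta) = \pi^*\big(m(K_X+\Delta)\big)$ is Cartier on $Y$. Hence the index of $(Y,\pi^*\Delta)$ divides $m$ and is therefore prime to $p$. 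Thus everything comes down to verifying $K_Y = \pi^* K_X$.

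To prove crepancy I would compute $\omega_Y$ directly from the graded structure of $C(D)$, avoiding any ramification formula. Grothendieck duality for the finite map $\pi$ (as invoked in Section \ref{subsec:Different}, cf. \cite{ResAndDua}) gives $\pi_*\omega_Y \cong \Hom_R(C(D), \omega_R)$ as $C(D)$-modules. Since $C(D) = \bigoplus_{i=0}^{n-1} R(-iD)t^i$ with each $R(-iD)$ reflexive of rank one, this unwinds to
\[
  \pi_*\omega_Y \;\cong\; \bigoplus_{i=0}^{n-1}\Hom_R\big(R(-iD),\omega_R\big) \;\cong\; \bigoplus_{i=0}^{n-1}\omega_R(iD).
\]
On the other hand, the reflexive pullback satisfies $\pi_*\big(\pi^*\omega_X\big) \cong \big(C(D)\otimes_R \omega_R\big)^{\vee\vee} \cong \bigoplus_{i=0}^{n-1}\omega_R(-iD)$. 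Because $nD = \div(f) \sim 0$, the two $\Z/n\Z$-graded modules have the same graded pieces, and tracking the $C(D)$-action shows the multiplication maps agree; hence $\omega_Y \cong \pi^*\omega_X$ as $\O_Y$-modules, which is precisely $K_Y = \pi^* K_X$.

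The one point requiring genuine care is that $C(D)$ need not be étale, or even separable, over $R$ when $p$ divides the index $n$ of $D$, so the usual ramification formula $K_Y = \pi^*K_X + \mathrm{Ram}$ computed through the different is unavailable: the different degenerates in the wild case. The graded duality computation above sidesteps this entirely, producing the isomorphism $\omega_Y \cong \pi^*\omega_X$ uniformly, whether or not $p \mid n$. As a consistency check, when $D$ is a ruling of the $A_1$-cone and $p \neq 2$ this recovers the crepancy of the quotient $\A^2 \to \A^2/(\Z/2)$. I expect the bookkeeping of the $C(D)$-module structure --- confirming that the graded isomorphism of the previous paragraph is $\O_Y$-linear and not merely $R$-linear --- to be the most delicate step, but it follows by comparing the natural pairings $R(-jD)\otimes \omega_R(-iD) \to \omega_R(-(i+j)D)$ on both sides, using $t^n = 1/f$ to implement the identification in degrees $\geq n$.
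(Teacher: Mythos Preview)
Your approach is correct but takes a different route from the paper. The paper stays entirely within the Frobenius framework: since $(p^e-1)(K_X+\Delta)$ is Cartier for some $e$, the divisor--map correspondence \eqref{eq:DivisorMapCorrespondence} produces $\phi_\Delta \in \Hom_R(F_*^eR,R)$; because the projection $T\colon C(D)\to R$ onto degree zero generates $\Hom_R(C(D),R)$ (so $D_T=0$, as recorded in Remark~\ref{rmk:WhatDoesTDo}), the transpose criterion of Theorem~\ref{thm:SchwedeTuckerTranspose} lifts $\phi_\Delta$ to a map $\phi_{\pi^*\Delta}\colon F_*^eC(D)\to C(D)$ with associated divisor $\pi^*\Delta$, and one reads off that $(p^e-1)(K_Y+\pi^*\Delta)$ is Cartier.

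Your argument is characteristic-free and more classical: establish crepancy $K_Y=\pi^*K_X$ by a direct duality computation, then pull back the Cartier divisor $m(K_X+\Delta)$. The two proofs share the same underlying fact --- your crepancy statement $\omega_Y\cong\pi^*\omega_X$ is exactly the assertion that $\omega_{Y/X}\cong\O_Y$, i.e.\ that $T$ freely generates $\Hom_R(C(D),R)$, i.e.\ $D_T=0$ --- but package it differently. Your route yields the slightly sharper conclusion that the index of $(Y,\pi^*\Delta)$ divides that of $(X,\Delta)$, and it works without any appeal to Frobenius; the paper's route costs nothing beyond citing $D_T=0$ since the transpose machinery is already in place. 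The graded bookkeeping you flag as delicate in your final paragraph is precisely the verification that $T$ generates $\Hom_R(C(D),R)$ as a $C(D)$-module, and you could streamline your write-up by proving that single statement directly rather than matching the two graded decompositions piecewise.
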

\begin{proof}
  As before, let $T : C(D) \to R$ be the map which projects onto the degree zero piece of $C(D)$. We saw in the proof of Proposition \ref{prop:TransformationRuleSpecialCase} that $D_T = 0$, so we have the diagram
  \begin{center}
    \begin{tikzcd}
      F_*^e C(D) \arrow[r, "\phi_{\pi^*\Delta}"] \arrow[d, "F_*^eT"] & C(D)\arrow[d, "T"] \\
      F_*^eR \arrow[r, "\phi_\Delta"] & R 
    \end{tikzcd}.
  \end{center}
  By the divisor-maps correspondence discussed in Section \ref{subsec:Different}, we see that $(p^e-1)(K_Y + \pi^*\Delta)$ is Cartier, as required. 
\end{proof}

\begin{example}\label{ex:AnSingularities2}
  Let us carry out the cyclic cover construction for $R = \bb{F}_p\llbracket x,y,z \rrbracket / (xy - z^{n+1})$ and $D = V(x,z)$. Then $C = R \oplus (xt,zt) \oplus (xt^2, z^2t^2) \oplus \cdots \oplus (xt^{n}, z^nt^n)$ with $t^{n+1} = 1/x$. It is easy to see that $C / (xt^n) \cong \bb{F}_p\llbracket y,zt \rrbracket / (y-(zt)^{n+1})$, and the inclusion $R \subseteq C$ is given by $y \mapsto y$. We can also explicitly verify the equality of Proposition \ref{prop:CyclicCoverProperties}(4). We have $\Ram_{\pi|_{D'}} = n[0]$ where $[0]$ is the divisor corresponding to the point $(0,0)$ on $D'$ in the $(y,zt)$-plane. Also,
  \[
    \pi|_{D'}^*\Diff_D(0) = \pi|_{D'}^* \left( \frac{n}{n+1} \right)[0] = n[0]. 
  \]
  So $\pi|_{D'}^*\Diff_D(0) - \Ram_{\pi|_{D'}} = 0$. Since $D'$ is a smooth Cartier divisor, $C$ is Gorenstein. Hence $\Diff_{D'}(0)$ is easily seen to be 0 from the definitions. From these computations and Example \ref{ex:AnSingularities}, one sees that 
  \[
    (n+1)\cdot s(D,\Diff_D(\Delta)) = (n+1)\cdot s\left(\bb{F}_p\llbracket y\rrbracket, \left(\frac{n}{n+1}\right)[0]\right) = 1
  \]
  and
  \[
    s(D', \pi|_{D'}^*\Diff_D(\Delta) - \Ram_{\pi|_{D'}}) = s(\bb{F}_p\llbracket y \rrbracket) = 1
  \]
  which verifies part (2) of Proposition \ref{prop:TransformationRuleSpecialCase} in this case.
\end{example}

\section{Proof of the Main Theorem}\label{sec:InversionOfAdjunction}

This section contains the proof of the main theorem. In the proof, we employ two standard exact sequence computations whose proofs we include for completeness.

\begin{lemma}\label{lem:LengthLemma1}
  Let $R$ be a ring. If $I \subseteq R$ is an ideal of finite colength, $g \in R$ is a non-zero element, then 
  \[
    \ell_R\left( \frac{R}{(I : g)}\right) = \ell_R\left( \frac{R}{I} \right) - \ell_R\left( \frac{R}{(I,g)} \right).
  \]
\end{lemma}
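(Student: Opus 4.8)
The plan is to realize all three lengths as terms in a single short exact sequence of finite-length $R$-modules and then invoke additivity of length. The natural device is multiplication by $g$, which relates the colon ideal $(I:g)$ to $I$ on one side and the ideal $(I,g)$ on the other.

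Concretely, I would consider the $R$-linear map
\[
  \mu_g \colon R/(I:g) \longrightarrow R/I, \qquad x + (I:g) \longmapsto gx + I.
\]
First I would check that this is well-defined and injective: by the very definition of the colon ideal, $gx \in I$ holds if and only if $x \in (I:g)$, which yields both properties at once. Next I would identify the image as $(gR+I)/I = (I,g)/I$, so that the cokernel of $\mu_g$ is $(R/I)\big/\big((I,g)/I\big) \cong R/(I,g)$. This produces the short exact sequence
\[
  0 \longrightarrow R/(I:g) \xrightarrow{\ \mu_g\ } R/I \longrightarrow R/(I,g) \longrightarrow 0.
\]

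Since $I$ has finite colength, $R/I$ has finite length, and both $R/(I:g)$ and $R/(I,g)$ are quotients of $R/I$ (because $I \subseteq (I:g)$ and $I \subseteq (I,g)$), hence they too have finite length. Additivity of length on the displayed exact sequence then gives $\ell_R(R/(I:g)) + \ell_R(R/(I,g)) = \ell_R(R/I)$, which is the claimed identity after rearranging. There is no genuine obstacle here: the entire content is the exactness of the sequence, and the only place the hypotheses enter is in guaranteeing that all three modules have finite length so that additivity of length applies. (One could equally use the four-term sequence $0 \to (I:g)/I \to R/I \xrightarrow{\mu_g} R/I \to R/(I,g) \to 0$ together with $\ell_R(R/I) = \ell_R(R/(I:g)) + \ell_R((I:g)/I)$, but the three-term sequence above is the cleanest route.)
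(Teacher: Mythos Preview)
Your proof is correct and is essentially the same as the paper's: both compute the kernel and cokernel of multiplication by $g$ on $R/I$ and apply additivity of length. The paper uses the four-term exact sequence you mention in your final parenthetical, while you use the equivalent three-term repackaging; there is no substantive difference.
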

\begin{proof}
  Take the kernel and cokernel of the map $R/I \map{\times g} R/I$ to obtain the exact sequence 
  \[
    0 \to \frac{(I:g)}{I} \to \frac{R}{I} \map{\times g} \frac{R}{I} \to \frac{R}{(I,g)} \to 0.
  \]
  So $\ell_R((I:g)/I) = \ell(R/(I,g))$. We also have $\ell((I:g)/I) = \ell(R/I) - \ell(R/(I:g))$. Equating the two expressions for $\ell((I:g)/I)$ gives the result. 
\end{proof}

\begin{lemma}\label{lem:LengthLemma2}
  Let $R$ be a ring, and let $M$ be an $R$-module of finite length. If $f \in R$ is a non-zero element, then
  \[
    \ell_R\left( \frac{M}{f^nM} \right) = n \cdot \ell_R\left( \frac{M}{fM} \right) - \sum_{i=1}^{n-1} \ell_R\left(\frac{(f^{i+1}M : f^i)}{fM} \right).
  \]
\end{lemma}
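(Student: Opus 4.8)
The plan is to build a descending filtration of $M$ by powers of $f$, compute $\ell_R(M/f^nM)$ as a telescoping sum over the successive quotients, and then rewrite each such quotient in terms of $M/fM$ together with a colon-module correction term.

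First I would consider the chain of submodules
\[
  M \supseteq fM \supseteq f^2M \supseteq \cdots \supseteq f^nM,
\]
so that additivity of length yields
\[
  \ell_R\!\left( \frac{M}{f^nM} \right) = \sum_{i=0}^{n-1} \ell_R\!\left( \frac{f^iM}{f^{i+1}M} \right).
\]
The key step is to identify each graded piece $f^iM/f^{i+1}M$ with an honest quotient of $M$. For this I would use the surjection given by multiplication, $f^i \colon M \twoheadrightarrow f^iM$, and note that the preimage of $f^{i+1}M$ under this map is precisely $(f^{i+1}M : f^i) = \{\, m \in M : f^i m \in f^{i+1}M \,\}$. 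Hence multiplication by $f^i$ descends to an isomorphism
\[
  \frac{M}{(f^{i+1}M : f^i)} \xrightarrow{\ \sim\ } \frac{f^iM}{f^{i+1}M},
\]
so these modules have equal length.

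Next I would observe that $fM \subseteq (f^{i+1}M : f^i)$, since $f^i(fm) = f^{i+1}m \in f^{i+1}M$ for every $m \in M$. Feeding the inclusions $fM \subseteq (f^{i+1}M : f^i) \subseteq M$ into the short exact sequence relating $M/fM$, $M/(f^{i+1}M:f^i)$, and $(f^{i+1}M:f^i)/fM$ then gives
\[
  \ell_R\!\left( \frac{f^iM}{f^{i+1}M} \right) = \ell_R\!\left( \frac{M}{fM} \right) - \ell_R\!\left( \frac{(f^{i+1}M : f^i)}{fM} \right).
\]
Summing over $i = 0, \dots, n-1$ produces $n\,\ell_R(M/fM)$ minus the correction sum. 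Finally the $i=0$ term of that sum vanishes, because $(fM : f^0) = fM$ forces $(fM : 1)/fM$ to be the zero module, so the sum may be re-indexed to begin at $i=1$, which matches the claimed formula exactly.

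The computations are routine, and all of them rest on additivity of length, which is available since $M$ has finite length. The only real subtlety, and the step I would verify most carefully, is the identification $f^iM/f^{i+1}M \cong M/(f^{i+1}M:f^i)$: one must check that the map induced by multiplication by $f^i$ is well defined and that its kernel is exactly the colon module $(f^{i+1}M:f^i)$, rather than a smaller or larger submodule.
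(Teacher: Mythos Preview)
Your proof is correct and follows essentially the same approach as the paper: both use the isomorphism $M/(f^{i+1}M:f^i)\cong f^iM/f^{i+1}M$ together with the short exact sequence relating $M/fM$, $M/(f^{i+1}M:f^i)$, and $(f^{i+1}M:f^i)/fM$. The only cosmetic difference is that the paper packages the argument as an induction on $n$, while you write out the telescoping sum directly.
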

\begin{proof}
  For $n \geq 1$, we have the isomorphism $\frac{M}{(f^{n+1}M:f^n)} \cong \frac{f^nM}{f^{n+1}M}$ and exact sequences 
  \begin{align*}
    0 \to \frac{f^{n}M}{f^{n+1}M} \to& \frac{M}{f^{n+1}M} \to \frac{M}{f^{n}M} \to 0 \\
    0 \to \frac{(f^{n+1}M : f^{n})}{fM} \to &\frac{M}{fM} \to  \frac{M}{(f^{n+1}M : f^{n})} \to 0.
  \end{align*}
  The base case of $n=1$ being trivial, the result follows by induction on $n$. 
\end{proof}

Now, we have the tools to prove our main theorem. Recall that for a function $f(t)$, we denote the left derivative of $f$ at $c$ by $\frac{\partial_-}{\partial t}\big|_{t=c} f(t)$.

\begin{theorem}\label{thm:MainTheorem}
  Let $(R,\Delta + D)$ be a log $\Q$-Gorenstein pair. Assume $R$ is a normal local ring of dimension $d$, $\Delta$ is an effective $\Q$-divisor, and $D$ is a reduced, irreducible $\Q$-Cartier divisor such that $D \wedge \Delta = 0$. Then
  \[
     \frac{\partial_-}{\partial t}\bigg|_{t=1} s(R,\Delta + tD) = -s(\O_D,\Diff_D(\Delta)).
  \]
\end{theorem}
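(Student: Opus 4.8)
The plan is to first reduce to the case where $D$ is Cartier and then run a length computation comparing the colon ideals $(I_e^\Delta : f^i)$ on $R$ with the degeneracy ideals of $\Diff_D(\Delta)$ on $\O_D$. For the reduction, I would pass to the cyclic cover $\pi \colon \Spec C(D) \to \Spec R$ of Section \ref{sec:VeroneseCovers}, replacing $(R, \Delta, D)$ by $(C(D), \pi^*\Delta, D')$ with $D' = \pi^*D$ Cartier (Proposition \ref{prop:CyclicCoverProperties}(3)). Since $s(C(D), \pi^*\Delta + tD') = m\, s(R, \Delta + tD)$ and $s(\O_{D'}, \Diff_{D'}(\pi^*\Delta)) = m\, s(\O_D, \Diff_D(\Delta))$ by Proposition \ref{prop:TransformationRuleSpecialCase}(1),(3), where $m$ is the index of $D$, the desired identity follows from the Cartier case after dividing by $m$; this is also what lets the statement cover the case $p \mid m$, since $D'$ has index $1$.

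\emph{Cartier case, the easy inequality.} Write $D = \div(f)$, $\O_D = R/(f)$, $\bar R = R/I_e^\Delta$, and $g(t) = s(R, \Delta + tD)$. By Proposition \ref{prop:F-signatureInTermsOfLength}, $g(t) = \lim_e p^{-ed}\,\ell_R(R/(I_e^\Delta : f^{\lceil t(p^e-1)\rceil}))$. Telescoping with Lemma \ref{lem:LengthLemma1} identifies the consecutive differences as $f^i\bar R/f^{i+1}\bar R \cong \O_D/(I_e^\Delta : f^i)\O_D$, so that $g(t)-g(1) = \lim_e p^{-ed}\sum_{i=k}^{p^e-2}\ell_{\O_D}(\O_D/(I_e^\Delta : f^i)\O_D)$ with $k = \lceil t(p^e-1)\rceil$. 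The point is that at $i = p^e - 1$ Proposition \ref{prop:CartierSubalgebraComparison}(2) gives $(I_e^\Delta : f^{p^e-1})\O_D = I_e^{\Diff_D(\Delta)}$, while the colon ideals $(I_e^\Delta : f^i)$ increase in $i$; hence every summand dominates $\ell_{\O_D}(\O_D/I_e^{\Diff_D(\Delta)})$. Summing and normalizing yields $g(t)-g(1) \ge (1-t)\,s(\O_D, \Diff_D(\Delta))$ for all $t<1$, i.e. $\frac{\partial_-}{\partial t}\big|_{t=1} g(t) \le -s(\O_D, \Diff_D(\Delta))$.

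\emph{The reverse inequality.} For the matching lower bound I would show the secant slopes actually approach $-s(\O_D,\Diff_D(\Delta))$. Bounding each summand above by its largest term (at $i = k$) reduces everything to the normalized image-lengths $L(t) := \lim_e p^{-e(d-1)}\ell_{\O_D}(\O_D/(I_e^\Delta : f^{\lceil t(p^e-1)\rceil})\O_D)$: the sum for $g(t)-g(1)$ is a Riemann sum (mesh $p^{-e}$) of $L$ over $[t,1]$, and it suffices to prove that $L$ exists and is left-continuous at $t=1$ with $L(1) = s(\O_D,\Diff_D(\Delta))$. It is exactly here that the colon–Frobenius containment (Lemma \ref{lemma:ColonFrobeniusPowerContainment}) enters: it compares $(I_e^\Delta : f^{\lceil t(p^e-1)\rceil})$ at level $e$ with the corresponding colon ideal at a higher level $r$ (of the same ratio $\approx t$), providing the uniform asymptotic estimate that makes the $e\to\infty$ and $t\to 1^-$ limits interchangeable, with Lemma \ref{lem:LengthLemma2} organizing the telescoping. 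The degenerate case, where $D$ is not the minimal center of $F$-purity so that $s(\O_D,\Diff_D(\Delta))=0$ (Lemma \ref{lem:LimitLemma}), should be subsumed: there $L(1)=0$ and the same estimate forces $g$ to be flat to the left of $t=1$.

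I expect the principal obstacle to be precisely this interchange of limits — establishing the existence and left-continuity of $L$ at the threshold $t=1$ — rather than either inequality in isolation. The easy inequality uses only monotonicity of colon ideals and the comparison of Cartier algebras, but the reverse direction must control $(I_e^\Delta : f^i)\O_D$ for $i$ slightly below $p^e-1$, where the naive lower bound on the image ideal collapses (since $f$ dies in $\O_D$); squeezing this quantity asymptotically against $I_e^{\Diff_D(\Delta)}$ via the Frobenius comparison is the technical heart of the argument.
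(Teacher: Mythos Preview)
Your overall architecture matches the paper's: reduce to the Cartier case via the cyclic cover, then run a length computation on $R$ and compare with the degeneracy ideals on $\O_D$. Your easy inequality is essentially the paper's upper bound, obtained from monotonicity of the colon ideals together with Proposition~\ref{prop:CartierSubalgebraComparison}. Two points, however, deserve comment.

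First, you conflate the index $m$ of the divisor $D$ with the log $\Q$-Gorenstein index of the pair $(R,\Delta+D)$. The cyclic cover makes $D' = \pi^*D$ Cartier regardless of whether $p \mid m$, but every piece of $F$-adjunction you invoke --- Proposition~\ref{prop:CartierSubalgebraComparison}, Lemma~\ref{lem:LimitLemma}, and Proposition~\ref{prop:TransformationRuleSpecialCase}(3) itself --- requires that $(p^e-1)(K_R+\Delta+D)$ be Cartier for some $e$, i.e.\ that the index of $K_R+\Delta+D$ be prime to $p$. Passing to $C(D)$ does not repair this; Lemma~\ref{lem:IndexPrimeToP} only says the property is preserved, not created. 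The paper handles this with a separate step: once the theorem is known for pairs of index prime to $p$, one perturbs $\Delta$ to $\Delta + \tfrac{1}{p^e-1}\Delta'$ for a suitable effective $\Delta'$ so that the perturbed pair has index prime to $p$, applies the known case to the convex functions $s_e(t) = s(R,\Delta + \tfrac{1}{p^e-1}\Delta' + tD)$, and then uses that $s_e \nearrow s$ uniformly (an increasing pointwise limit of continuous convex functions) to pass the left derivative through the limit. Your plan has no analogue of this step, so as written it does not cover the general statement.

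Second, your route to the reverse inequality is more roundabout than necessary. You propose to show that $L(t) = \lim_e p^{-e(d-1)}\ell_{\O_D}\big(\O_D/(I_e^\Delta:f^{\lceil t(p^e-1)\rceil})\O_D\big)$ exists for $t<1$ and is left-continuous at $t=1$; this is more than is needed and not obviously accessible. The paper sidesteps $L$ entirely. After first reducing to the case where $D$ is the minimal center of $F$-purity (so that $s(R,\Delta+D)=0$ and $\O_D$ is normal), one computes the left derivative along the specific sequence $t_e = 1 - 1/p^e$: at level $r$ the colon exponent is exactly $p^r - p^{r-e} = p^{r-e}(p^e-1)$, and Lemma~\ref{lemma:ColonFrobeniusPowerContainment} gives $(I_e^\Delta:f^{p^e-1})^{[p^{r-e}]} \subseteq (I_r^\Delta:f^{p^r-p^{r-e}})$. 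The inner limit over $r$ is then immediately recognized as $p^{-e(d-1)}\HK\big((I_e^\Delta:f^{p^e-1});R\big)$, and Theorem~\ref{thm:F-signatureAndHK} together with Proposition~\ref{prop:CartierSubalgebraComparison} finishes. No Riemann sums, no continuity of an auxiliary function, and Lemma~\ref{lem:LengthLemma2} is used only on the upper-bound side.
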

\begin{proof}
  First, we may assume that $(R, \Delta+D)$ has splitting dimension $d-1$. Otherwise, the $F$-pure threshold of the pair is strictly less than one, and the result is trivial. Thus, we may assume that $D$ is the minimal center of $F$-purity for $(R,\Delta+D)$, so is $F$-regular and hence normal \cite{SchFad}.
  
  Now, we handle the case where $D = \div(f)$ is Cartier and $K_R + \Delta + D$ has index prime to $p$. The limit defining $\frac{\partial_-}{\partial t}\big|_{t=1} s(R,\Delta + tD)$ exists by Proposition \ref{prop:ContinuityAndConvexityOfF-signature}, so we may compute the left derivative via any sequence converging to $1$ from below. We choose the sequence $1 - 1/p^e$. We have
  \begin{align*}
    \frac{\partial_-}{\partial t}\bigg|_{t=1} s(R,\Delta + tD) &= \lim_{t \to 1^-} \frac{s(R,\Delta +tD) - s(R,\Delta +D)}{t-1} \\ 
    &= \lim_{e \to \infty} \frac{s(R,\Delta +(1-1/p^e)D) - s(R,\Delta+D)}{-1/p^e} \\
    & = -\lim_{e \to \infty} \lim_{r \to \infty}  \frac{1}{p^{rd-e}}\left( \ell_R\left( \frac{R }{ (I^\Delta_r:f^{p^{r} - p^{r-e}})} \right) - \ell_R\left(\frac{R}{(I^\Delta_r : f^{p^r})}\right) \right)
  \end{align*}
  Applying Lemmas \ref{lem:LengthLemma1} and \ref{lem:LengthLemma2} gives
  \begin{align*}
    \frac{\partial_-}{\partial t}\bigg|_{t=1} s(R,\Delta + tD) &= -\lim_{e \to \infty} \lim_{r \to \infty} \frac{1}{p^{rd-e}} \left( \ell_R\left(\frac{R}{(I^\Delta_r,f^{p^r})}\right) - \ell_R\left(\frac{R }{ (I^\Delta_r,f^{p^r - p^{r-e}})}\right) \right) \\
    &= -\lim_{e \to \infty} \lim_{r \to \infty} \frac{1}{p^{rd-e}} \left( p^{r-e}\ell_R\left(\frac{R}{(f,I^\Delta_r)}\right) - \sum_{i=p^r - p^{r-e}}^{p^r-1} \ell_R\left( \frac{(\ov{f}^{i+1} : \ov{f}^i)}{\ov{f}} \right)\right)
  \end{align*}
  where $\ov{f}$ denotes the image of $f$ in $R/I_r^\Delta$. Note that the ideals $(\ov{f}^{i+1}:\ov{f}^i)$ form an increasing chain
  \[
    (\ov{f}^2 : \ov{f}) \subseteq (\ov{f}^3 : \ov{f}^2) \subseteq \cdots \subseteq (\ov{f}^{p^r} : \ov{f}^{p^r-1}) = (0 : \ov{f}^{p^r-1})
  \]
  In particular, $\ell_R\left( \frac{(\ov{f}^{i+1}:\ov{f}^i}{(\ov{f})} \right) \leq \ell_R\left( \frac{(0:\ov{f}^{p^r-1})}{(\ov{f})}\right)$ for $i \leq p^r-1$. So we have the inequality
  \begin{align*}
    \frac{\partial_-}{\partial t}\bigg|_{t=1} s(R,\Delta + tD) &\leq -\lim_{e \to \infty} \lim_{r \to \infty}\frac{1}{p^{r(d-1)}} \left( \ell\left( \frac{R}{(I^\Delta_r,f)}\right) - \ell\left( \frac{(0:\ov{f}^{p^r-1})}{(\ov{f})} \right) \right) \\
    &=  -\lim_{r \to \infty} \frac{1}{p^{r(d-1)}} \left( \ell\left(\frac{R}{(I^\Delta_r,f)}\right) -  \ell\left(\frac{R}{(I^\Delta_r,f)}\right) + \ell\left(\frac{R}{(I^\Delta_r:f^{p^r-1})} \right) \right) \\
    &= - \lim_{r \to \infty} \frac{1}{p^{r(d-1)}}\ell\left( \frac{R}{(I^\Delta_r : f^{p^r-1})} \right) \\
    &= - s(\O_D, \Diff_D(\Delta))
  \end{align*}
  where the final equality follows from Lemma \ref{lem:LimitLemma}.
  
  For the reverse inequality, the inclusion $(I^\Delta_e:f^{p^e-1})^{[p^{r-e}]} \subseteq (I^\Delta_r: f^{p^r - p^{r-e}})$ of Lemma \ref{lemma:ColonFrobeniusPowerContainment} implies
  \begin{align*}
    \frac{\partial_-}{\partial t}\bigg|_{t=1} s(R,\Delta + tD) &= \lim_{e \to \infty} \frac{s(R,f^{1-1/p^e})}{-1/p^e} \\
    &= - \lim_{e \to \infty} \lim_{r \to \infty} \frac{1}{p^{rd-e}} \ell_R\left(  \frac{R}{(I^\Delta_r : f^{p^r - p^{r-e}})}\right) \\
    &\geq - \lim_{e \to \infty} \lim_{r \to \infty} \frac{1}{p^{rd-e}} \ell_R\left(\frac{ R}{(I^\Delta_e : f^{p^e-1})^{[p^{r-e}]}} \right) \\
    &= -\lim_{e \to \infty} \frac{\HK((I^\Delta_e:f^{p^e-1});R)}{p^{e(d-1)}}. \\
  \end{align*}
  By Proposition \ref{prop:CartierSubalgebraComparison}, the extension of $(I^\Delta_e:f^{p^e-1})$ in $R/f$ is $I_e^{\Diff_D(\Delta)}$ for large and divisible $e$. Thus 
  \[
    \frac{\partial_-}{\partial t}\bigg|_{t=1} s(R,\Delta + tD) \geq - \lim_{e \to \infty} \frac{\HK(I_e^{\Diff_D(\Delta)};\O_D)}{p^{e(d-1)}} = -s(\O_D, \Diff_D(\Delta))
  \]
  where the last equality follows from Theorem \ref{thm:F-signatureAndHK} and Proposition \ref{prop:CartierSubalgebraComparison}.

  Now, we use the Cartier case and the results of Section \ref{sec:VeroneseCovers} to handle the case where $D$ is $\Q$-Cartier of index $m$. So $mD = \div(x)$ for some $x \in R$. Let $\mf{p}$ be the ideal cutting out the $\Q$-Cartier divisor $D$. Form the finite ring map $R \to C(D) := R \oplus \mf{p}t \oplus \mf{p}^{(2)}t^2 \oplus \cdots \oplus \mf{p}^{(m-1)}t^{m-1}$ with $\pi : \Spec{C(D)} \to \Spec{R}$ the corresponding map of schemes. By Proposition \ref{prop:CyclicCoverProperties}, $D' := \pi^*D$ is Cartier. By Proposition \ref{prop:TransformationRuleSpecialCase}, $D'$ is also $F$-regular, so in particular, $D'$ is normal. Moreover, the pair $(C(D), \pi^*\Delta + D')$ has index prime to $p$ by Lemma \ref{lem:IndexPrimeToP}. We compute
  \[
    \frac{\partial_-}{\partial t}\bigg|_{t=1} s(R,\Delta + tD) = \lim_{e \to \infty} \frac{s(R,\Delta+(1-1/p^e)D)}{-1/p^e} = \lim_{e\to \infty} \frac{s(C(D),\pi^*\Delta + (1-1/p^e) D')}{-m/p^e}  
  \]
  \[
    =\frac{-s(D', \Diff_{D'}(\pi^*\Delta))}{m} = -s(D,\Diff_D(\Delta)).
  \]
  The first equality is part (1) of Proposition \ref{prop:TransformationRuleSpecialCase}, the second follows from the Cartier case, and the third follows from part (2) of Proposition \ref{prop:TransformationRuleSpecialCase}.

  Finally, we remove the assumption that the $\Q$-Gorenstein index is prime to $p$ via a perturbation of $\Delta$. See \cite[Corollary 5.4]{Das} for another instance of this perturbation argument. Assume $K_R + \Delta + D$ is $\Q$-Gorenstein of arbitrary index. Choose $B \geq 0$ such that $K_R - B$ is Cartier, and choose an effective $D' \sim D$ such that $D'$ does not contain $D$. Then set $\Delta' = D' + \Delta + B$. Thus $\Delta'$ is an effective $\Q$-Cartier divisor such that $K_R + (\Delta + \frac{1}{p^e-1} \Delta') + D$ is $\Q$-Cartier with index prime to $p$.
  
  Now, let $s_e(t) = s(R, \Delta + \frac{1}{p^e-1}\Delta' + tD)$. Note that $s_e(t)$ is a sequence of convex continuous functions of $t \in [0,1]$ which converges pointwise to $s(t) := s(R, \Delta + tD)$ and strictly increases with $e$. Thus $s_e(t)$ converges uniformly to $s(t)$. Since $\Delta'$ and $\Diff_{D}(\Delta')$ $\Q$-Cartier, Proposition \ref{prop:ContinuityAndConvexityOfF-signature} implies 
  \begin{align*}
  	\lim_{e \to \infty} s\left(R, \Delta + \frac{1}{p^e-1}\Delta' + tD\right) &= s(R, \Delta + tD) \\ 
  	\lim_{e \to \infty} s\left(\O_D, \Diff_D\left(\Delta + \frac{1}{p^e-1}\Delta'\right)\right) &= s(\O_D, \Diff_D(\Delta))
  \end{align*}
  for each $t$. Applying the previous case of the theorem at hand and uniform convergence to switch the limit and the derivative, we have
	\begin{align*}
		\frac{\partial_-}{\partial t}\bigg|_{t=1} s(R, \Delta + tD) &= \frac{\partial_-}{\partial t}\bigg|_{t=1} \lim_{e \to \infty} s\left(R, \Delta + \frac{1}{p^e-1}\Delta' + tD \right) \\
		&= \lim_{e \to \infty} \frac{\partial_-}{\partial t}\bigg|_{t=1} s\left(R, \Delta + \frac{1}{p^e-1}\Delta' + tD \right) \\
		&= \lim_{e \to \infty} -s\left(\O_D, \Diff_D\left(\Delta + \frac{1}{p^e-1}\Delta' \right) \right) \\
		&= -s(\O_D, \Diff_D(\Delta))
	\end{align*}
  	as required.
\end{proof}

\begin{remark}\label{rmk:CyclicCoverIsNecessary}
  One might wonder whether the cyclic cover construction is necessary for the proof of Theorem \ref{thm:MainTheorem}. The proof in the Cartier case can be carried out in the $\Q$-Cartier case, using Lemma \ref{lem:LimitLemma} in its most general form. However, this approach only yields the bound $-s(D,\Diff_D(\Delta)) \leq \frac{\partial_-}{\partial t}\big|_{t=1} s(R,\Delta + tD) \leq -s(D,\Diff_D(\Delta)) / m$ where $m$ is the index of $D$.
\end{remark}

\begin{remark}\label{rmk:QCartierDivisor}
	The inversion of adjunction statement in \cite{Das} does not require the divisor $D$ to be $\Q$-Cartier. It would be interesting to know whether Theorem \ref{thm:MainTheorem} holds in this more general case. To the knowledge of the author, the corresponding result for normalized volume in characteristic 0 (i.e. Theorem \ref{thm:LiLiuXuTheorem}) is also open in the case where $D$ fails to be $\Q$-Cartier.
\end{remark}

\section{Complements}\label{sec:Corollaries}

\subsection{Inversion of Adjunction for the $F$-Signature}\label{subsec:Inequalities}

As noted previously, one can think of Theorem \ref{thm:MainTheorem} as a quantitative version of inversion of adjunction. Indeed, the theorem immediately implies the following inequality.

\begin{corollary}[Inversion of Adjunction for $F$-Signature]\label{cor:InversionOfAdjunctionFSignature}
  With notation as in Theorem \ref{thm:MainTheorem}, we have
  \[
    s(R,\Delta) \geq s(D,\Diff_D(\Delta))
  \]
  with equality if and only if the function $s(R,\Delta+tD)$ is linear for $t \in [0,1]$ with slope $-s(R,\Delta)$. In particular, if $(D,\Diff_D(\Delta))$ is strongly $F$-regular, so is $s(R,\Delta)$.
\end{corollary}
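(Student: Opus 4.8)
The plan is to derive Corollary \ref{cor:InversionOfAdjunctionFSignature} directly from Theorem \ref{thm:MainTheorem} using the convexity of the $F$-signature function together with its value and right-hand behavior at $t=0$. First I would recall from Proposition \ref{prop:ContinuityAndConvexityOfF-signature} that the function $g(t) := s(R,\Delta+tD)$ is continuous and convex on $[0,1]$, with $g(0) = s(R,\Delta)$ and $g(1) = s(R,\Delta+D)$. The key input is that, because $D$ is the minimal center of $F$-purity in the interesting case, the pair $(R,\Delta+D)$ fails to be $F$-regular, so $g(1) = s(R,\Delta+D) = 0$ by Theorem \ref{thm:PositivityOfF-signature}(2); if instead the splitting dimension drops, the argument degenerates and the stated inequality holds trivially since both sides can be compared directly.

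The main computation is a convexity estimate for the secant slope. For a convex function $g$ on $[0,1]$, the left derivative at the right endpoint dominates every secant slope: in particular
\[
  \frac{g(1) - g(0)}{1 - 0} \;\leq\; \frac{\partial_-}{\partial t}\bigg|_{t=1} g(t).
\]
Substituting $g(0) = s(R,\Delta)$, $g(1) = 0$, and the value of the left derivative supplied by Theorem \ref{thm:MainTheorem}, namely $\frac{\partial_-}{\partial t}\big|_{t=1} g(t) = -s(\O_D,\Diff_D(\Delta))$, the displayed inequality reads $-s(R,\Delta) \leq -s(\O_D,\Diff_D(\Delta))$, which is exactly the desired $s(R,\Delta) \geq s(\O_D,\Diff_D(\Delta))$ after negating. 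The final assertion about strong $F$-regularity then follows immediately: if $(D,\Diff_D(\Delta))$ is $F$-regular, then $s(\O_D,\Diff_D(\Delta)) > 0$ by Theorem \ref{thm:PositivityOfF-signature}(2), whence $s(R,\Delta) > 0$ and $(R,\Delta)$ is $F$-regular.

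For the equality characterization, I would analyze when the convexity inequality above becomes an equality. A continuous convex function on $[0,1]$ satisfies $\frac{g(1)-g(0)}{1-0} = \frac{\partial_-}{\partial t}\big|_{t=1} g(t)$ precisely when $g$ is affine (linear) on the whole interval $[0,1]$; this is the standard fact that the secant slope over $[0,1]$ equals the left derivative at $1$ iff the graph has no strict convex bend anywhere on the interval. In that case the common slope is $g(1) - g(0) = -s(R,\Delta)$, matching the stated slope. Conversely, if $g$ is linear with slope $-s(R,\Delta)$ on $[0,1]$, then its left derivative at $1$ is $-s(R,\Delta)$, which by Theorem \ref{thm:MainTheorem} equals $-s(\O_D,\Diff_D(\Delta))$, forcing equality of the two $F$-signatures.

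The only genuine obstacle is the bookkeeping around the degenerate case where the splitting dimension of $(R,\Delta+D)$ is strictly less than $d-1$, equivalently where the $F$-pure threshold is below one so that $g(1)$ need not be the relevant endpoint value; there one must check that the inequality still holds and that Theorem \ref{thm:MainTheorem}'s right-hand side is zero, making the bound $s(R,\Delta) \geq 0 = s(\O_D,\Diff_D(\Delta))$ automatic. Apart from verifying that the convexity facts are applied at the correct endpoint and that $g(1)=0$ in the principal case, every step is an immediate consequence of results already established in the excerpt.
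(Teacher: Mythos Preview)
Your argument is correct and follows essentially the same route as the paper: both use convexity of $t \mapsto s(R,\Delta+tD)$ from Proposition \ref{prop:ContinuityAndConvexityOfF-signature} to bound the secant slope by the left derivative at $t=1$, then invoke Theorem \ref{thm:MainTheorem} to identify that derivative with $-s(\O_D,\Diff_D(\Delta))$. Your explicit justification that $g(1)=s(R,\Delta+D)=0$ (via the $\mscr{C}^{\Delta+D}$-compatibility of $D$ and Theorem \ref{thm:PositivityOfF-signature}) and your more detailed treatment of the equality case fill in points the paper leaves implicit, but the underlying argument is the same.
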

\begin{proof}
  By Proposition \ref{prop:ContinuityAndConvexityOfF-signature}, the $F$-signature function $s(R,\Delta+tD)$ is convex. Thus 
  \[
    \frac{\partial_-}{\partial t}\bigg|_{t=1} -s(R,\Delta + tD) \leq s(R,\Delta),
  \] 
  and the statement follows from the inequality 
  \[
    s(D,\Diff_D(\Delta)) \leq -\frac{\partial_-}{\partial t}\bigg|_{t=1} s(R,\Delta + tD)
  \] 
  of Theorem \ref{thm:MainTheorem}. The characterization of equality is an immediate consequence of the convexity of the function $s(R,\Delta+tD)$. 
\end{proof}

The final statement in Corollary \ref{cor:InversionOfAdjunctionFSignature} is known as \emph{strongly $F$-regular inversion of adjunction} and was originally shown in \cite[Corollary 5.4]{Das}. Note that Das's result does not require $D$ to be $\Q$-Cartier. As stated in the introduction, inversion of adjunction is a key tool in the MMP. For example, in \cite{HacWit}, Das's inversion of adjunction is used to establish the existence of generalized plt blowups in dimension 3. It would be interesting to further characterize when equality holds in Corollary \ref{cor:InversionOfAdjunctionFSignature} in terms of the geometry of $R$ and $D$.

The question of inversion of adjunction is closely related to the notion of deformation for singularities. We say that a property $\mcal{P}$ \emph{deforms} if the following implication holds: 
\[
	R/x \text{ satisfies } \mcal{P} \text{ for some regular element } x \Longrightarrow R \text{ satisfies } \mcal{P}.
\]
Understanding deformation of $F$-singularities has attracted great interest over the last twenty years. Deformation of $F$-purity fails in general, but was recently shown to hold for $\Q$-Gorenstein rings in \cite{SimpPol}. Deformation of $F$-rationality is known, but deformation of $F$-injectivity is one of the eminent open problems of positive characteristic commutative algebra. It is well-known that $F$-regularity does not deform in general but does deform under mild conditions \cite{Singh99a}. In fact, Corollary \ref{cor:InversionOfAdjunctionFSignature} recovers and refines a special case of the main result in \cite{Singh99a}. Indeed, suppose $R$ is a normal, $\Q$-Gorenstein local ring and $D = \Spec{R/f}$ is $F$-regular. Then $\Diff_D(0) = 0$, and Corollary \ref{cor:InversionOfAdjunctionFSignature} implies $s(R) \geq s(R/f)$. Since positivity of the $F$-signature characterizes $F$-regularity, $R$ is $F$-regular as well. The inequality $s(R) \geq s(R/f)$ can be interpreted as \emph{deformation of the $F$-signature}. Given our main theorem and the fact that $F$-rationality deforms, it would be interesting to study the behavior of the various candidates for $F$-rational signature \cite{SmirTuck} under deformation and inversion of adjunction.

\begin{example}\label{ex:AnSingularities3}
 Let $R = \bb{F}_p\llbracket x,y,z\rrbracket/(xy-z^{n+1})$ and $D = V(x,z)$. Then Corollary \ref{cor:InversionOfAdjunctionFSignature} shows that $s(R, tD) = \frac{1-t}{n+1}$ for all $t \in [0,1]$. Indeed, $s(R) = 1/(n+1)$ by \cite[Example 18]{HunLeu} and as computed in Example \ref{ex:AnSingularities}, $s(D, \Diff_D(0)) = s(\bb{F}_p\llbracket y \rrbracket, \left( \frac{n}{n+1} \right)[0]) = \frac{1}{n+1}$. 
 
 One should note that the $A_n$ singularities, used as a running example in this paper, are toric. So the $F$-signature function can also be computed with toric methods \cite{VK}.
\end{example}

\subsection{$F$-Signature and Normalized Volume}\label{subsec:NormalizedVolume}
To conclude the paper, we take a moment to expand on the connection of our results with the study of normalized volume. The normalized volume, introduced in \cite{Li18}, is a numerical invariant used to study klt pairs in characteristic 0. Since its introduction, it has inspired a great deal of work, especially due to its implications for $K$-semistability and moduli of Fano varieties, see \cite{LiLiuXu} and the references therein. Although no precise conjectures have been formulated, there is mounting evidence that the $F$-signature has deep connections to normalized volume. In particular, the normalized volume and $F$-signature have similar behavior under small birational maps \cite{MPST}, and in forthcoming work Yuchen Liu defines a positive characteristic analog of the normalized volume, called the $F$-volume, for which he proved inequalities relating the $F$-volume to the $F$-signature \cite{LiLiuXu}. Most relevant to this paper is the following result computing the left derivative of the normalized volume function.

\begin{theorem}[{\cite[Proposition 6.8]{LiLiuXu}}]\label{thm:LiLiuXuTheorem}
  Let $x \in (X,\Delta)$ be an $n$-dimensional klt singularity. Let $D$ be a normal $\Q$-Cartier divisor containing $x$ such that $(X,D+\Delta)$ is plt. Denote by $\Diff_D(\Delta)$ the different of $\Delta$ on $D$. Then
  \[
    \lim_{t \to 1^-} \frac{\nvol(x, X, \Delta + tD)}{-n^n t} = -\frac{\nvol(x,D,\Diff_D(\Delta))}{(n-1)^{n-1}}.
  \]
\end{theorem}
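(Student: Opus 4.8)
The plan is to pinch $\nvol(x,X,\Delta+tD)$ between an upper and a lower bound whose leading behaviour as $t\to 1^-$ is governed by $\nvol(x,D,\Diff_D(\Delta))$, in exact parallel with the two inequalities that bracket the left derivative of the $F$-signature in the proof of Theorem \ref{thm:MainTheorem}. The first observation is that since $(X,\Delta+D)$ is plt, the divisorial valuation $\ord_D$ is a log canonical place, so $A_{X,\Delta+D}(\ord_D)=0$ and hence $\nvol(x,X,\Delta+D)=0$; normalising so that $A_{X,\Delta}(\ord_D)=1$ gives $A_{X,\Delta+tD}(\ord_D)=1-t$, the small parameter. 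Consequently the left derivative in question is the limit of the difference quotient $\nvol(x,X,\Delta+tD)/(t-1)$, and the whole problem becomes computing the rate at which the normalized volume decays to $0$ as the pair acquires the log canonical place $\ord_D$.

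For the upper bound I would feed in explicit test valuations adapted to $D$. Given a valuation $w$ on $\O_{D,x}$ centred at $x$ and a weight $\lambda>0$, form the quasi-monomial valuation $v_{\lambda,w}$ on $\O_{X,x}$ interpolating $\ord_D$ (with weight $\lambda$) and $w$, via the degeneration to the normal cone of $D$. The two structural inputs are an adjunction formula for log discrepancies,
\[
A_{X,\Delta+tD}(v_{\lambda,w}) = \lambda(1-t) + A_{D,\Diff_D(\Delta)}(w),
\]
which follows from $A_{X,\Delta+D}(v_{\lambda,w})=A_{D,\Diff_D(\Delta)}(w)$ together with the correction $(1-t)\,v_{\lambda,w}(D)=(1-t)\lambda$, and a volume formula $\vol(v_{\lambda,w})=\lambda^{-1}\vol_D(w)$ that is exact in the associated-graded cone model. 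Substituting and minimising
\[
\big(\lambda(1-t)+A_{D,\Diff_D(\Delta)}(w)\big)^{n}\,\lambda^{-1}\vol_D(w)
\]
first over $\lambda$ (an elementary one-variable optimisation that produces the factor $n^{n}/(n-1)^{n-1}$ by the same AM--GM mechanism responsible for a smooth point having normalized volume $n^{n}$) and then over $w$ (which recovers $\nvol(x,D,\Diff_D(\Delta))$) yields
\[
\nvol(x,X,\Delta+tD)\ \le\ (1-t)\,\frac{n^{n}}{(n-1)^{n-1}}\,\nvol(x,D,\Diff_D(\Delta)),
\]
and hence one of the two derivative inequalities.

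The reverse inequality --- the lower bound on $\nvol(x,X,\Delta+tD)$, equivalently the assertion that the test valuations above are asymptotically optimal --- is the crux and the main obstacle, playing the role that Lemma \ref{lemma:ColonFrobeniusPowerContainment} plays in the $F$-signature argument. Here I would invoke the existence, uniqueness, and properness of the normalized volume minimizer to show that as $t\to 1^-$ the minimizing valuation $v_t$ cannot escape to infinity and must concentrate along $D$: any limit configuration has log discrepancy forcing it onto the unique log canonical place $\ord_D$, so $v_t$ degenerates to a quasi-monomial valuation of the form $v_{\lambda,w}$, after which a semicontinuity argument for $A$ and $\vol$ matches the lower bound to the upper bound. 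The delicate steps are controlling the volume under the degeneration to the normal cone, so that the factor $\lambda^{-1}\vol_D(w)$ is \emph{recovered} in the limit rather than merely bounded, and ruling out minimizers whose restriction to $D$ fails to converge; these are precisely the points where the characteristic-$0$ theory of $K$-semistable degenerations of the minimizer is indispensable, and they have no direct counterpart in the purely ideal-theoretic computation behind Theorem \ref{thm:MainTheorem}.
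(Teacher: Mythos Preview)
This theorem is not proved in the paper at all: it is quoted verbatim from \cite[Proposition 6.8]{LiLiuXu} and serves only as motivation and evidence for the analogy between $F$-signature and normalized volume. There is no ``paper's own proof'' to compare against; the paper's contribution is Theorem \ref{thm:MainTheorem}, the $F$-signature analogue, whose proof is purely ideal-theoretic and does not touch valuations, log discrepancies, or degenerations to the normal cone.

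Your sketch is a plausible outline of how the characteristic-zero argument in \cite{LiLiuXu} might go, and you correctly identify that the lower bound is the hard direction with no elementary counterpart. But since the present paper does not attempt this proof, there is nothing here to review it against; if you want feedback on the argument itself you would need to compare it to the actual proof in \cite{LiLiuXu}.
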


Theorem \ref{thm:LiLiuXuTheorem} was the inspiration for our main theorem. These results demonstrate that the behavior under adjunction at codimension 1 centers of both the $F$-signature and the normalized volume is controlled by the left-derivative at $t = 1$. Theorem \ref{thm:MainTheorem} provides further evidence of the connection between these two invariants.

It is natural to ask about adjunction at centers of higher codimension. In their survey, Li, Liu, and Xu ask whether a version of Theorem \ref{thm:LiLiuXuTheorem} holds for centers of higher codimension \cite[Question 6.10]{LiLiuXu}. Using Schwede's theory of $F$-adjunction, there is a notion of the $F$-different for centers of higher codimension. Although, one should note that the different and the $F$-different do not necessarily agree for centers of higher codimension \cite{DasSch}. 

\begin{question}[{cf. \cite[Question 6.10]{LiLiuXu}}]\label{ques:HigherCodimension}
	Let $(R,\Delta)$ be a normal $\Q$-Gorenstein (local) pair of dimension $d$ and let $D$ be an effective $\Q$-Cartier divisor. Let $c = \mathrm{fpt}(R, \Delta; D)$ be the $F$-pure threshold, let $W$ be the minimal center of $F$-purity, and let $\FDiff_W(\Delta)$ be the $F$-different \cite{SchFad}. Suppose that $W$ has codimension $k$ in $\Spec{R}$. Can we bound the limit
	\[
		\lim_{\varepsilon \to 0^+} \frac{s(R,\Delta + (1-\varepsilon )cD)}{\varepsilon^k}
	\]
	from below in terms of $s(R,\Delta)$ and $s(W, \FDiff_W(\Delta))$?
\end{question}

For this question, it might be necessary to assume that $p$ does not divide the denominator of the $F$-pure threshold, as this avoids some pathological behavior. For regular rings, the main result of \cite{CantonEtAl} suggests that we have the right power of $\varepsilon$ in the denominator of the limit in Question \ref{ques:HigherCodimension}.

\bibliographystyle{alpha}
\bibliography{Fadjunction}{}

\end{document}